\def\titlerunning#1{\gdef\titrun{#1}}
\def\author#1{\gdef\autrun{\def\and{\unskip, }#1}\gdef\@author{#1}}
\def\subjclass#1{{\renewcommand{\thefootnote}{}%
\footnote{\emph{Mathematics Subject Classification (2010):} #1}}}
\def\keywords#1{\par\medskip
\noindent\textbf{Keywords.} #1}
\def\CX{{\mathbb C}}
\def\RX{{\mathbb R}}
\def\QX{{\mathbb Q}}
\def\NX{{\mathbb N}}
\def\ZX{{\mathbb Z}}
\def\GL{{\rm GL}}
\def\SL{{\rm SL}}
\def\gl{{\rm gl}}
\def\QED{\hbox{\hskip 1pt \vrule width4pt height 6pt depth 1.5pt \hskip 1pt}}
\def\calM{{\cal M}}
\def\calN{{\cal N}}
\def\calA{{\cal A}}
\newcommand{\ie}{{\it i.e.}}
\newcommand{\diag}{ {\rm diag} }
\renewcommand{\Re}{\re}
\renewcommand{\Im}{\mbox{\rm Im\,}}
\DeclareMathOperator{\re}{Re}
\newtheorem{theorem}{Theorem}
\newtheorem{prop}[theorem]{Proposition}
\newtheorem{cor}[theorem]{Corollary}
\newtheorem{lemma}[theorem]{Lemma}
\newtheorem{remark}[theorem]{Remark}
\def\QED{\hbox{\hskip 1pt \vrule width4pt height 6pt depth 1.5pt \hskip 1pt}}
\newenvironment{prff}[1]{\trivlist
\item[\hskip \labelsep{\bf #1.\hspace*{.3em}}]}{~\hspace{\fill}~$\square$\endtrivlist}
\newenvironment{prf}{
\begin{prff}{Proof}}{
\end{prff}}
 \def\square{\QED}
\begin{document}

\baselineskip=17pt

\titlerunning{Consistent systems}

\title{Consistent systems of linear differential and difference equations}
\author{Reinhard Sch\"afke\footnote{ 
Institut de Recherche Math\'ematique Avanc\'ee,
Universit\'e de  Strasbourg et C.N.R.S.,
7, rue Ren\'e Descartes,
67084 Strasbourg Cedex, France, {\tt schaefke@unistra.fr}.},   
Michael F. Singer\footnote{Department of Mathematics, North Carolina State University, Box 8205, Raleigh, NC 27695, USA, {\tt singer@ncsu.edu}.}}

\date{June 19th, 2017}
\maketitle

\subjclass{Primary  39A05; Secondary 34A30, 34K05, 34M03, 39A13, 39A45}

\begin{abstract} {We consider  systems of linear differential and difference equations 
\begin{eqnarray*}
\delta Y(x) =A(x)Y(x), \ \ \ \ \ 
\sigma Y(x) =B(x)Y(x) 
\end{eqnarray*}
with $\delta = \frac{d}{dx}$, $\sigma$ a shift operator {$\sigma(x) = x+a$}, 
$q$-{dilation} operator {$\sigma(x) = qx$} or Mahler operator {$\sigma(x) = x^p$}
and systems of two linear difference equations
\begin{eqnarray*}
\sigma_1 Y(x) =A(x)Y(x), \ \ \ \ \ 
\sigma_2 Y(x) =B(x)Y(x) 
\end{eqnarray*}
with $(\sigma_1,\sigma_2)$ a sufficiently independent  pair  of shift operators, pair of $q$-{dilation} operators or  pair of Mahler operators.
Here $A(x)$ and $B(x)$ are $n\times n$ matrices with rational function entries.
Assuming a consistency hypothesis, we show that such systems can be reduced to a system of a very simple form. 
Using this we characterize functions satisfying two linear scalar differential or difference equations with respect to these operators.  We also indicate how these results have consequences both  in the theory of automatic sets, leading to a new proof of Cobham's Theorem, and  in the Galois theories of linear difference and differential equations, leading to hypertranscendence results.}
\keywords{linear differential equations, linear difference equations, consistent systems, shift operator, $q$-difference equation, Mahler operator}
\end{abstract}

\section{Introduction} In \cite{Ramis92}, J.-P.~Ramis showed  that if a formal power series $f(x)$  is a solution of  a linear differential equation and a linear $q$-difference equation\footnote{A linear difference equation involving the operator $\sigma(x) = qx$}, {$q\neq 0$,
$q$ transcendental if $|q|=1$},
both with polynomial coefficients, then $f$ is the expansion at the origin of a rational function. Rationality has also been shown for a formal power series satisfying
\begin{itemize}
\item a linear differential equation and a linear $\sigma$-difference equation with polynomial coefficients, where $\sigma$ is the Mahler operator $\sigma(f)(x) = f(x^k), k\mbox{ an integer} \geq2$ \cite{Bez94}, 
\item   a linear $q_1$-difference equation and a linear $q_2$-difference equation, both with polynomial coefficients and with $q_1$ and $q_2$ multiplicatively independent ({\it i.e.,} no integer power of $q_1$  is equal to an integer power of $q_2$) \cite{BB92}
\footnote{The result of \cite{BB92} needs some restrictions, see below Corollary \ref{cor2q}.}, 
\item   a linear $\sigma_1$-difference equation and $\sigma_2$-difference equation with polynomial coefficients and with Mahler operators $\sigma_1, \sigma_2$ having multiplicatively independent exponents \cite{AuB16}.  
\end{itemize}
Other results characterizing entire solutions of a linear differential equation and a linear $\sigma$-equation with polynomial coefficients where $\sigma$ is the {operator $\sigma(x)= x+\alpha$, $\sigma(x) = qx$, or $\sigma(x) = x^k$ and entire solutions of two linear $\sigma$-difference equations involving these operators can be found in  \cite{BG96}, \cite{Bezivin00},\cite{BH99}, \cite{Jolly}, \cite{LuPe98}, \cite{Mart},   and \cite{Ran92}.}


    These results have been proved with a variety of ideas such as the structure of ideals of entire functions,  Gevrey-type estimates, $p$-adic behavior and mod $p$ reductions. In our work we present a unified approach to all these results, reproving and generalizing them to also characterize meromorphic solutions on the plane and certain Riemann surfaces.

Our results spring from two fundamental ideas.  The first is that  questions concerning the form of solutions of two scalar linear differential/difference equations can be reduced to showing that consistent pairs of first order systems are equivalent to very simple systems.  {The second is that the hypothesis of consistency allows us to show that the singular points are of a very simple nature, to describe the interaction of local solutions at different singular points and to continue local solutions meromorphically.  These conclusions, in turn, allow us to prove that the systems are equivalent to systems of a very simple form.} {For our approach, it is crucial that $\delta$ and 
$\sigma$  { or $\sigma_1$ and $\sigma_2$, respectively, }
 commute except for some constant factor. The commutativity is closely related to the consistency condition.}

{Our approach is best explained with the example of Ramis's result  in the case $|q|\neq 0,1$.
Let $f(x)$ be a power series satisfying both a linear differential equation and a linear $q$-difference equation with coefficients in $\CX(x)$. 
Using these equations, one shows that the $\CX(x)$-vector space $V$ spanned by $\{(x\frac{d}{dx})^if(q^jx)\}$ with 
$0\leq i<\infty, -\infty <j<\infty$ is finite dimensional (see Corollary~\ref{cor0}).  This space consists of Laurent series and 
is invariant under the map $\sigma$ that sends $x$ to $qx$ and the derivation $\delta=x\frac{d}{dx}$.  
{If $y_1(x), \ldots , y_n(x)$ is a $\CX(x)$-basis of $V$ and
$y(x) = (y_1(x), \ldots , y_n(x))^T$, then}
\begin{equation}\label{eq000}
\begin{aligned}
 x\frac{d y(x)}{d x} &= &A(x)y(x), &  \ \ A(x) \in \gl_n(\CX(x))\\
 y( qx) &=& B(x)y(x), & \ \ B(x) \in \GL_n(\CX(x)).
\end{aligned} 
\end{equation}
$B(x)$ is invertible because $\sigma$ is an automorphism.
Calculating $\sigma(\delta (y(x)))=\delta(\sigma(y(x)))$ in two ways and using that the components of $y$ are linearly independent 
over $\CX(x)$, we obtain that $A(x)$ and $B(x)$  satisfy the consistency condition 
\begin{equation}\label{eq000a}
x\frac{d B(x)}{d x} + B(x) A(x) = A(qx) B(x).
 \end{equation}

The first principal result of our work (Theorem~\ref{thm1}) states in this case that 
there exists a matrix $G(x) \in \GL_n(\CX(x))$ such that the gauge transformation $y(x) = G(x) z(x)$ results in a new  simpler system 
\begin{equation}\label{eq001}
\begin{aligned}
 x\frac{d z(x)}{d x} &= &{\tilde{A}}z(x), \\
z(qx) &=&\tilde{B}z(x)
\end{aligned} 
\end{equation}
with $\tilde{A} \in \gl_n(\CX)$ and $\tilde{B} \in \GL_n(\CX)$.

This implies that there is a new basis $z(x) = (z_1(x), \ldots , z_n(x))^T$ {of $V$} given by $z(x) = G(x)^{-1} y(x)$ such that  
$x\frac{dz(x)}{dx} = \tilde{A}z(x)$ and $z(qx) = \tilde{B}z(x)$ with $\tilde{A}$ and $\tilde{B}$ constant matrices.  It is not hard to 
show that the entries of $z(x)$ must be Laurent polynomials and therefore rational.  We then conclude that $y(x)$ is also rational 
and hence also the given $f(x)$ {is rational}.

{We now give an idea of the proof of Theorem~\ref{thm1} in the context of the  present case. A calculation shows that the 
consistency condition implies: {\it 
if $Y(x)$ is a solution of  $x\frac{d Y(x)}{d x} = A(x)Y(x)$
 then $Z(x) = B(x) Y(x)$ is a solution of $x\frac{dZ(x)}{dx} = A(qx)Z(x)$}.}
 Repeating this observation we have that for any $m$, there is a gauge transformation $Y(x) = D_m(x)Z(x)$ taking solutions of $x\frac{d Y(x)}{d x} = A(x)Y(x)$ to solutions of $x\frac{dZ(x)}{dx} = A(q^mx)Z(x)$.  This gauge transformation only introduces  apparent singularities, that is, those at which one has a meromorphic fundamental solution matrix. Since the singular points in $\CX\backslash \{0,\infty\}$ of these two equations   are disjoint for sufficiently large $m$, we can conclude that all the singular points, other than $0, \infty$, of $\frac{d Y(x)}{d x} = A(x)Y(x)$  are apparent (see  Lemma~\ref{lem1}). 

If $Y(x)$ is a formal fundamental solution of $x\frac{d Y(x)}{d x} = A(x)Y(x)$, then as seen above $Z(x)=B(x)Y(x)$ is a formal fundamental 
solution of $x\frac{dZ(x)}{dx} = A(qx)Z(x)$. Comparing it with the formal fundamental solution
$\tilde Z(x)=Y(qx)$ of this equation, it follows that $0$ is a regular singular point (see Lemma~\ref{lem1a}). A similar statement holds for $\infty$. We then have that if $Y(x)$ is a fundamental solution analytic in a neighborhood of an ordinary point, then $Y(x)$ can be analytically continued to a meromorphic function on the universal cover $\hat{\CX}$ of $\CX\backslash\{0\}$. If $Y(xe^{2\pi i})$ is the solution matrix obtained by analytically continuing $Y(x)$ once around $0$, we have that  $Y(xe^{2\pi i}) = Y(x) H$ for some $H \in \GL_n(\CX)$. Writing $H = e^{2\pi i \tilde{A}}$  with a non-resonant $\tilde A$, a calculation shows that $G(x)=Y(x)x^{-\tilde{A}}$ is a matrix valued meromorphic function on $\CX\backslash\{0\}$. Using the fact that $0$ and $\infty$ are regular singular points, one deduces that $G(x)$ has moderate growth at these points and so must have rational function entries.  Therefore  the gauge transformation $y(x) = G(x) z(x)$ transforms $x\frac{d y(x)}{d x} = A(x)y(x)$ to $x\frac{d z(x)}{d x} = \tilde{A}z(x)$.  One then shows that the consistency condition implies that this transformation also results in a constant $q$-difference equation for $z(x)$ and so (\ref{eq000}) is transformed into (\ref{eq001}) (see Lemma~\ref{lemQM} for details).}

The rest of our work is organized as follows.  In Section~\ref{Sec2} we consider systems (\ref{eq000}) where $\sigma(x) = x+1, qx \ (q\neq 0$
not a root of unity)  or $ x^q$, ($q$ an integer $\geq 2$) and $A(x) \in \gl_n(C(x)),  B(x) \in \GL_n(C(x)),\ C$ an algebraically closed field of characteristic zero. Assuming a consistency condition analogous to (\ref{eq000a}) we show in Theorem~\ref{thm1} that there is a transformation $Y(x) = D(x)Z(x)$ with $D(x) \in GL_n(C(x))$ taking system (\ref{eq000}) to a much simpler system.  
When $\sigma(x) = qx$ or $x^q$, we characterize those $y(x) \in C[[x]][x^{-1}]$  and those $y(x)$ meromorphic on the Riemann surface of $\log x$ (when $C = \CX$) that simultaneously satisfy a linear differential equation and a linear $\sigma$-difference equation over $C(x)$ ( {Corollary}~\ref{cor0}). When $\sigma(x) = x+1$, we  characterize those $y(x) \in C[[x^{-1}]][x]$  and those $y(x)$ meromorphic on $\CX$ that simultaneously satisfy a linear differential equation and a linear $\sigma$-difference equation over $C(x)$ (Corollary~\ref{cor0a}). Theorem~\ref{thm1} allows us to also characterize in Corollary~\ref{cortime} when the time-$1$-operator of a linear differential system has rational entries.
  
  In Section~\ref{Sec2a} we consider systems of the form
  \begin{equation}\label{002}
\begin{aligned}
\sigma_j(Y) & = & B_j\,Y,\ j=1,2
\end{aligned} 
\end{equation}
with $B_j \in \GL_n(C(x))$ satisfying a suitable consistency condition and $(\sigma_1,\sigma_2)$ defined by $(\sigma_1(x) = x+1, \sigma_2(x) = x+\alpha), \alpha \in \CX\backslash \QX$ or $(\sigma_1(x) = q_1x, \sigma_2(x) = q_2x)$ or $(\sigma_1(x) = x^{q_1}, \sigma_2(x) = x^{q_2})$ with $q_1$ and $q_2$ multiplicatively independent.  Theorem~\ref{thm2-1} states that  there is a gauge transformation $Y(x)= D(x) Z(x), D(x) \in \GL_n(C(x))$ in the first two cases and $D(x) \in \GL_n(K), K=C(\{x^{1/s}\mid s\in\NX^*\})$ in the last case, transforming such a system into  {a system}  with constant  {coefficients}.  Once again the proofs depend on showing that the singular points  and the connection relations  are particularly simple.  We again have corollaries characterizing formal solutions and solutions on various domains  of two linear $\sigma$ equations in each of these three cases (Corollaries~\ref{cor2s},~\ref{cor2q} and~\ref{cor2m}).
 
 We end this introduction with a discussion of two applications of our results.  The first concerns properties of {\it automatic sets} (See \cite{AS2003} for a general introduction to these sets and \cite{Becker94} and \cite{Ran92}  for connections to Mahler equations). 
A subset $\calN \subset \NX$ of integers is called {\it k-automatic} if there is a finite-state machine   that accepts as input the base-$k$ representation of an integer and outputs $1$ if the integer is in $\calN$ and $0$ if it is not in $\calN$. Many  {sets} can be $k$-automatic for fixed $k$ (for example the set of powers of $2$ is $2$-automatic) but only very simple sets can be $k$- and $\ell$-automatic for multiplicatively independent integers $k$ and $\ell$.  This fact is formalized in Cobham's Theorem \cite{Cobham}, \cite{Durand}. \\
 
\noindent {\bf Theorem (Cobham).} {\it Let $k$ and $\ell$ be two multiplicatively independent integers. Then a set $\calN \subset \NX$ is both $k$- and $\ell$-automatic if and only if it is the union of a finite set and a finite number of arithmetic progressions.}\\

Linear difference equations involving the Mahler operator and $k$-automatic sets are related by the following fact:  If $\calN$ is a $k$-automatic set then $F(x) = \sum_{n\in \calN} x^n$ satisfies a scalar linear difference equation over $\QX(x)$ with respect to the Mahler operator $\sigma(x) = x^k$, that is, a $k$-Mahler equation. In (\cite{AuB16}, {Theorem 1.1}), Adamczewski and Bell show: a power series $f(x) \in C[[x]][x^{-1}]$ satisfies both a $k$- and $\ell$-Mahler equation if and only if it is a rational function, proving a conjecture of Loxton and van der Poorten \cite{Poorten}.  Their proof relies on Cobham's Theorem.  {On the other hand, it is known that their Theorem 1.1 implies Cobham's Theorem (see, for example, Section 2, \cite{AuB16} or Chapitre 7, \cite{Ran92}).}  In our work we prove and generalize the Adamczewski-Bell result (Corollary~\ref{cor2m}) without using Cobham's Theorem, therefore yielding a new proof of this latter result. {Our proof of  the Adamczewski-Bell result follows the general philosophy of our work.  We show that proving that a power series  of two such Mahler equations is rational can be reduced to showing that consistent pairs of first order Mahler systems must be of a very simple nature. In fact, although we deduce the Adamczewski-Bell result from Theorem~\ref{thm2-1} mentioned above, we do not need its full strength and can also prove this result from the weaker statement contained in Proposition~\ref{2mprop}.}
 
 The second application concerns the Galois theory of difference equations. In \cite{HaSi08} a differential Galois theory of linear difference equations   was developed as a tool to understand the differential properties of solutions of linear difference equations.  This theory associates to a system of linear difference equations $Y(\sigma(x)) = B(x) Y(x)$  a group called the {\it differential Galois group}. This  is a linear differential algebraic group, that is a group of matrices whose entries are functions satisfying a fixed set of (not necessarily linear) differential equations.  Differential properties of solutions of the linear difference equation are measured by group theoretic properties of the associated group.  For example, a group theoretic proof is given in \cite{HaSi08}  of H\"older's Theorem that the Gamma Function satisfies no polynomial differential equation, that is, the Gamma Function is hypertranscendental.   In general, one can measure the amount of differential dependence among the entries of a fundamental solution matrix of $Y(\sigma(x)) = B(x) Y(x)$  by the size of its associated group; the larger the group the fewer differential relations hold among these entries.  This theme has been taken up in \cite{DHR15} where the authors develop criteria to show that the generating series $F(x) = \sum_{n\in \calN} x^n$ of certain $k$-automatic sets $\calN$ are hypertranscendental.  As we mentioned above, these generating series satisfy Mahler equations and Dreyfus, Hardouin and Roques in  \cite{DHR15} develop criteria to insure that a given Mahler equation has $\SL_n$ or $\GL_n$ as its associated group.  The proofs of the validity of their criteria depend on B\'ezivin's result \cite{Bez94} that a power series that simultaneously satisfies a Mahler equation and a linear differential equation must be a rational function.   In \cite{DHR16}, the authors develop similar criteria (using the result of Ramis mentioned in the Introduction) for linear $q$-difference equations.
 Both Ramis's result and B\'ezivin's result  appear in Corollary~\ref{cor0} as a consequence of Theorem~\ref{thm1} in the present work.  Using Theorem~\ref{thm1} directly, the authors of \cite{AS16} classify the differential Galois groups that can occur for the equations considered in this latter theorem and, in particular, rule out certain groups from occurring.  Using this classification, it is shown in \cite{AS16} how the criteria of \cite{DHR15} and  \cite{DHR16} can be extended and given simple proofs. The results of \cite{AS16} can also be used in designing algorithms to compute  the differential Galois group of linear difference equations (c.f.,  \cite{Arreche15b}). {Using the Galois theory 
presented  in~\cite{OW15}, the authors of~\cite{DHR16} also develop criteria  to determine when a solution of a linear $q$-difference equation satisfies no $q'$-difference relation (even nonlinear) with respect to a multiplicatively independent $q'$. This is done, in a manner analogous to the results of~\cite{DHR15}, by developing criteria to insure that the Galois groups in this context are large. 
 Their result depends on the results of B\'ezivin and Boutabaa~\cite{BB92}. The results of Section~\ref{Sec2a} can also be used to sharpen the criteria in~\cite{DHR16}.
 } 

\section{Reduction of systems of differential and difference equations}\label{Sec2} 

Let $C$ be an algebraically closed field\footnote{All fields considered in this work are of characteristic zero.} and $k = C(x)$. Let $\delta$ be a derivation on $k$ with constants $C$ and $\sigma$ be a $C$-algebra 
endomorphism of $k$. 
We suppose that there is a  constant $\mu\in C$ such that $\delta\sigma=\mu\,\sigma\delta$. {This commutativity
except for a constant factor is crucial for our approach.}

We consider three cases of couples $(\delta,\sigma)$ below.
\begin{itemize}
\item[{\rm case S:}] The derivation is $\delta=d/dx$ and $\sigma$ is the shift operator defined by
$\sigma(x)=x+1$. Here $\mu=1$.
\item[{\rm case Q:}] The derivation is $\delta=x\,d/dx$ and $\sigma$ is the $q$-dilation operator 
defined by $\sigma(x)=q\,x$ with some $q\in C$, $q\neq0$ and not a root of unity. Note that  $\mu=1$ here as well.
\item[{\rm case M:}] The derivation is $\delta=x\,d/dx$ and $\sigma$ is the Mahler operator 
defined by $\sigma(x)=x^q$ with an integer $q\geq2$. Here we have $\mu=q$.
\end{itemize}
Observe that $\sigma$ is bijective in  cases S and Q, but not in  case M.

We will consider systems 
\begin{equation}\label{eq1}
\begin{aligned}
\delta(Y) & = & AY\\
\sigma(Y) & = & BY
\end{aligned} 
\end{equation}
with $A \in \gl_n(k), B \in \GL_n(k)$ that are {\it consistent}, that is $A$ and $B$ satisfy the  consistency condition  given by
\begin{equation} \label{eq2}
\begin{aligned}
\delta(B) &=& \mu\,\sigma(A)B-BA.
\end{aligned}
\end{equation}

{{The consistency condition is closely related to the almost-commutativity of $\delta,\,\sigma$.} Note that{ it} guarantees that $\delta(\sigma(Z)) = \mu\sigma(\delta(Z))$ holds for any solution $Z$ of the system { (\ref{eq1})} in any extension of $C(x)$. It is satisfied if there exists
a fundamental solution of $\delta(Y) =  AY$ that is also a solution of $\sigma(Y) = BY$ in some
extension of $C(x)$ { in which $\delta$ and $\sigma$ commute} or if there exists a solution vector of the system in 
{ such an}  extension
such that its components are linearly independent over $C(x)$. The consistency condition
is satisfied for the systems (\ref{eq1}) constructed from the applications to common solutions 
of pairs of linear scalar equations, {again because $\delta$ and $\sigma$ commute except for a constant factor.}}

We say that (\ref{eq1}) is {\it equivalent} (over $k$) to a system 
\begin{equation}\label{eq3}
\begin{aligned}
\delta(Z) & = & \tilde{A}Z\\
\sigma(Z) & = & \tilde{B}Z
\end{aligned} 
\end{equation}
with $\tilde A \in \gl_n(k), \tilde B \in \GL_n(k)$ if for some $G\in \GL_n(k)$,
\begin{equation}\label{eq4}
\begin{aligned}
\tilde{A} & =  \delta(G)G^{-1} + GAG^{-1}\\
\tilde{B} & =  \sigma(G)BG^{-1}
\end{aligned} 
\end{equation}
that is, if  (\ref{eq3}) comes from (\ref{eq1}) via the {\it gauge transformation} $Z = GY$. Note that the property of consistency is preserved under equivalence. 

A simple, but crucial observation is the fact that the consistency condition can be expressed
as an equivalence.
\begin{lemma}\label{keyobs} Consider the system (\ref{eq1}). 
It satisfies the consistency condition (\ref{eq2}) if and only if
it is equivalent to the system
\begin{equation}\label{conj-sys}\delta(Z)=\mu\sigma(A)Z,\ \ \sigma(Z)=\sigma(B)Z\end{equation}
by the gauge transformation $Z=BY$.
For $N\in\NX^*$, it is equivalent to the systems
\begin{equation}\label{conj-sysN}\delta(Z)=\mu^N\sigma^N(A)Z,\ \ \sigma(Z)=\sigma^N(B)Z.\end{equation}
\end{lemma}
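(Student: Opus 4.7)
The plan is to verify both assertions by a direct calculation with the transformation formulas (\ref{eq4}), specialised to the choice $G=B$.

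First, I would substitute $G=B$ into (\ref{eq4}). The $\sigma$-part gives $\tilde{B}=\sigma(B)\,B\,B^{-1}=\sigma(B)$ \emph{automatically}, independently of any consistency assumption. The $\delta$-part gives $\tilde{A}=\delta(B)B^{-1}+BAB^{-1}$, so requiring $\tilde{A}=\mu\sigma(A)$ and multiplying on the right by $B$ produces precisely the identity (\ref{eq2}). Hence the equivalence of (\ref{eq1}) with (\ref{conj-sys}) via the gauge $Z=BY$ holds if and only if (\ref{eq2}) does, which proves the first part in both directions simultaneously.

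For the iterated assertion I would argue by induction on $N$, the case $N=1$ being the first part. The inductive step requires checking that (\ref{conj-sysN}) is itself consistent, so that the first part can be re-applied to it. Writing $A_N=\mu^N\sigma^N(A)$ and $B_N=\sigma^N(B)$, and using the iterated commutation relation $\delta\sigma^N=\mu^N\sigma^N\delta$ (immediate from $\delta\sigma=\mu\sigma\delta$ by induction on $N$), I compute
\begin{equation*}
\delta(B_N)=\mu^N\sigma^N(\delta B)=\mu^N\sigma^N\bigl(\mu\sigma(A)B-BA\bigr)=\mu\sigma(A_N)B_N-B_NA_N,
\end{equation*}
which is exactly the consistency condition for the pair $(A_N,B_N)$. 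Applying the first part to (\ref{conj-sysN}) then produces an equivalence, via the gauge $W=B_N Z=\sigma^N(B)Z$, to
\begin{equation*}
\delta(W)=\mu\sigma(A_N)W=\mu^{N+1}\sigma^{N+1}(A)W,\qquad \sigma(W)=\sigma(B_N)W=\sigma^{N+1}(B)W.
\end{equation*}
Composing this with the inductive gauge closes the induction and incidentally yields the explicit transition matrix $G_N=\sigma^{N-1}(B)\sigma^{N-2}(B)\cdots\sigma(B)\,B$ taking (\ref{eq1}) to (\ref{conj-sysN}).

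There is essentially no obstacle here; the argument is formal bookkeeping. The only conceptual point to keep in view is that the almost-commutativity $\delta\sigma=\mu\sigma\delta$ is exactly what forces the consistency identity (\ref{eq2}) to be preserved by this particular gauge; this is what makes the induction self-sustaining and is, as highlighted in the introduction, the precise formal expression of the ``consistency $=$ almost-commutativity'' principle.
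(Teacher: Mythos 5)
Your proof is correct and takes essentially the same route as the paper, which simply says that rewriting (\ref{eq2}) gives the first part and that iteration, using the fact that the equivalent systems are again consistent, gives the second. You have merely made explicit the two computations the paper leaves implicit (the specialisation $G=B$ in (\ref{eq4}) and the verification that the pair $(\mu^N\sigma^N(A),\sigma^N(B))$ is again consistent), both of which check out.
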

\begin{proof} Rewriting (\ref{eq2}) yields the first part, iteration using the fact that all
these equivalent systems are again consistent yields the second.\end{proof}

\noindent Observe that (\ref{conj-sysN}) is also obtained by applying $\sigma^N$ to (\ref{eq1}). 

The main result of this section { expresses that the consistency condition is very restrictive.}
\begin{theorem}\label{thm1} The system  (\ref{eq1}) satisfying the consistency condition (\ref{eq2})  
is equivalent over $k$  to a system (\ref{eq3})
with $\tilde{A}\in \gl_n(C)$, $\tilde B\in\GL_n(k)$. Moreover:
\begin{itemize}
\item[{\rm case S:}] $\tilde A$ is diagonal, $\tilde B\in \GL_n(C)$ is constant and upper triangular and commutes with $\tilde A$.
\item[{\rm case Q:}] If $\lambda_1,\lambda_2$ are eigenvalues of $\tilde A$,  then $\lambda_1-\lambda_2\not\in\ZX\setminus\{0\}$.
 $\tilde B\in \GL_n(C)$ is constant and commutes with $\tilde A$.
\item [{\rm case M:}] The eigenvalues of $\tilde A$ are rational and in the interval $[0,1[$ and 
there exists a diagonalisable matrix $D$ with integer eigenvalues
commuting with $\tilde A$ such that $\tilde A+D$ is conjugate to $q\,\tilde A$.
We have
$\tilde{B} \in \GL_n(C[x,x^{-1}])$,  such that 
 the exponents $m$ appearing with nonzero coefficient in $\tilde B$
are integer differences of the form $q\lambda_1-\lambda_2$ of eigenvalues $\lambda_1,\lambda_2$ of $\tilde A$. 
\end{itemize}
\end{theorem}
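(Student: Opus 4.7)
The plan is to follow the strategy sketched in the introduction for case~Q and adapt it to all three cases. It splits naturally into four steps.

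\textbf{Step 1 (apparent singularities off a distinguished set).} By Lemma~\ref{keyobs}, iterated $N$ times, the system (\ref{eq1}) is equivalent via the rational gauge $B_N=\sigma^{N-1}(B)\cdots\sigma(B)B\in\GL_n(k)$ to $\delta(Z)=\mu^N\sigma^N(A)Z$. For $N$ sufficiently large, the pole loci of $A$ and $\sigma^N(A)$ become disjoint off a finite ``distinguished'' set $S$: take $S=\{\infty\}$ in case~S, $S=\{0,\infty\}$ in case~Q, and $S=\{0,\infty\}$ in case~M (with additional care about roots of unity, which require a separate argument). Since a $\GL_n(k)$-gauge can introduce only apparent singularities (points where the solution matrix extends meromorphically), every pole of $A$ outside $S$ must in fact be apparent.

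\textbf{Step 2 (regular singularities on $S$).} If $Y(x)$ is a formal fundamental solution of $\delta(Y)=AY$ at a point of $S$, then both $B(x)Y(x)$ and $Y(\sigma(x))$ are formal fundamental solutions of $\delta(Z)=\mu\sigma(A)Z$, hence differ by a constant invertible matrix. Any irregular formal exponential factor in $Y$ must therefore transform compatibly under $x\mapsto\sigma(x)$. In each of cases S, Q, and~M the explicit form of $\sigma$ (translation, dilation by $q$, or $q$-th power) makes such compatibility impossible for non-trivial irregular parts, forcing $Y$ to be of regular-singular type at every point of $S$. This is the step where case~M is substantially harder than cases~S and~Q because $x\mapsto x^q$ introduces a branched cover, so one must handle Puiseux factors and ramified exponentials.

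\textbf{Step 3 (construction of the gauge $G$).} With only apparent singularities off $S$ and regular singularities on $S$, a local fundamental solution at an ordinary point continues meromorphically to the universal cover of $\mathbf{P}^1\setminus S$. Choose a non-resonant matrix logarithm $\tilde A\in\gl_n(C)$ of the monodromy around a relevant point of $S$ (in case~M, with eigenvalues in $\QX\cap[0,1)$). Then $G(x)=Y(x)\exp(-\tilde A\log x)$, suitably interpreted in each case, is single-valued on the complement of $S$, and moderate growth at the regular singularities forces $G\in\GL_n(k)$. This $G$ transforms $\delta(Y)=AY$ into $\delta(Z)=\tilde AZ$. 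In case~M, the interaction of the chosen logarithm with the $q$-fold covering $x\mapsto x^q$ is what will produce the diagonalisable $D$ with integer eigenvalues and the conjugacy $\tilde A+D\sim q\tilde A$.

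\textbf{Step 4 (identification of $\tilde B$).} With $A$ replaced by the constant $\tilde A$, the consistency condition (\ref{eq2}) reads $\delta(\tilde B)=\mu\,\sigma(\tilde A)\tilde B-\tilde B\tilde A=\mu\tilde A\tilde B-\tilde B\tilde A$, since $\sigma$ fixes constants. In a basis adapted to $\tilde A$, each entry $\tilde B_{ij}$ satisfies a first-order scalar equation whose general solution is a power or exponential of $x$ determined by $\mu$ and the eigenvalues of $\tilde A$. Requiring $\tilde B\in\GL_n(k)$ forces these exponents to vanish in cases~S and~Q (using the non-resonance of $\tilde A$), giving $\tilde B$ constant and commuting with $\tilde A$ as claimed. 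In case~M, $\mu=q$ produces exponents $q\lambda_i-\lambda_j$ which must be integers for the corresponding entry of $\tilde B$ to be rational, yielding the Laurent-polynomial description.

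The main obstacles I anticipate are (i) ruling out irregular singularities in Step~2, which requires a careful comparison of formal exponential and ramified factors under $\sigma$, especially delicate in case~M; and (ii) in case~M the finer structural statement that $\tilde A+D$ is conjugate to $q\tilde A$, which demands explicit bookkeeping of how the $q$-action permutes the eigenlines of the monodromy and justifies the choice of exponents in $[0,1)$. The remaining case-S and case-Q details should be relatively routine once Steps 1--3 are in place.
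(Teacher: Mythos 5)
Your Steps 1, 2 and 3 reproduce the paper's argument faithfully for cases Q and M (Lemmas~\ref{lem1}, \ref{lem1a} and \ref{lemQM}: apparent singularities away from $\{0,\infty\}$, regular singularities at $0$ and $\infty$ via the permutation of the exponential parts, then the monodromy construction $G(x)=Y(x)x^{-\tilde A}$ with moderate growth forcing rationality), and Step 4 matches the paper's concluding computation with $\tilde B$. Your worry about roots of unity in case M is not a real obstacle: since $\sigma^N(x_1)=x_1^{q^N}=\xi$ has $q^N$ solutions and the singular set is finite, for large $N$ some preimage avoids it.

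The genuine gap is in case S. Your Step 2 claims that the compatibility of the exponential factor with $x\mapsto x+1$ forces a regular singularity at $\infty$; this is false. A polynomial $q(x)=\lambda x$ of degree one satisfies $q(x+1)=q(x)+\lambda$, so the permutation argument yields no contradiction, and indeed the theorem's conclusion in case S allows a nonzero constant diagonal $\tilde A$, i.e.\ solutions $e^{a_ix}$ that are genuinely irregular at infinity (the system $y'=y$, $y(x+1)=e\,y(x)$ is consistent). The paper's Lemma~\ref{lem1b} accordingly only bounds the degree of the exponential part by $1$. Your Step 3 then also breaks down in case S: since $\infty$ is the only non-apparent singularity and $\CX$ is simply connected, there is no monodromy to exploit, and the obstruction to $G\in\GL_n(k)$ is exponential growth rather than multivaluedness. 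The paper's Proposition~\ref{prop1} supplies the missing argument: one takes sectorial asymptotic solutions $Y^S,Y^T$ at infinity, observes that because all finite singularities are apparent these extend to meromorphic functions on $\CX$ with finitely many poles, deduces that the Stokes matrices $C^{\pm}$ are simultaneously upper and lower triangular unipotent and hence trivial, so that $F(x)=Y^S(x)e^{-\Lambda x}$ is rational; a separate, rather delicate comparison of conjugate exponents is needed to exclude ramified terms $x^\alpha$, $0<\alpha<1$, in the exponential part. Without some substitute for this Stokes-matrix analysis your proof of case S does not go through.
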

{Remark: Simple counter-examples show that the statement in case Q no longer holds if $q$
is a root of unity. Consider for instance the natural consistent system satisfied by
$y_j(x)=\exp(q^jx)$, $j=0,...,n-1$, if $q^n=1$.}

Before giving a proof of this result {in sections~\ref{th1sq} and \ref{thm1m}}, we deduce 
several corollaries {concerning common solutions of the linear differential and $\sigma$-difference equations
\begin{equation}\label{sysLS}\renewcommand{\arraystretch}{1.4}
\begin{array}{rcl}L(f(x)) &= & \delta^n(f(x)) + a_{n-1}(x) \delta^{n-1}(f(x)) + \ldots + a_0(x) f(x) =  0 
\mbox { and }\\
S(f(x)) & =  &\sigma^m(f(x)) + b_{m-1}(x) \sigma^{m-1}(f(x)) + \ldots + b_0(x)f(x) =  0,\end{array}\end{equation}
with $a_i(x), b_i(x) \in \CX(x)$.}
\begin{cor}\label{cor0} Consider $\delta,\sigma$ as in case Q or M. Let $E$ be the field of meromorphic functions on $\hat\CX$, where $\hat\CX$ denotes the Riemann surface of the logarithm over $\CX\setminus\{0\}$. $\CX(x)$ is considered as a subfield of $E$. If $f\in E$ satisfies the linear differential and $\sigma$-difference equations (\ref{sysLS})
then 
\begin{equation}\label{ratilog}{ f(x)  = \sum_{i,j=1}^t r_{ij}(x) x^{\alpha_i}\log(x)^j}\end{equation}
where $\alpha_i \in \CX$ and $r_{ij}\in\CX(x)$. In case M, we obtain moreover that {the} $\alpha_i$ are rational.
Conversely, any such function satisfies a pair of linear differential and $\sigma$-difference equations with coefficients in $\CX(x)$.

{Assume $f(x) \in \CX[[x]][x^{-1}]$ with $\CX(x)$ considered as a subfield of $\CX[[x]][x^{-1}]$. If $f(x)$ satisfies the linear differential and $\sigma$-difference equations
(\ref{sysLS}) then $f$ is rational, \ie\ $f\in\CX(x)$.     }
\end{cor}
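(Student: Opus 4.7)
The plan is to reduce Corollary~\ref{cor0} to Theorem~\ref{thm1} by packaging $f$ into a finite-dimensional $\CX(x)$-vector space jointly stable under $\delta$ and $\sigma$. First, the scalar $\sigma$-equation $S(f)=0$ of order $m$ shows that $\sigma^m f$ lies in the $\CX(x)$-span of $f,\sigma f,\ldots,\sigma^{m-1}f$; iterating (and in case Q using $\sigma^{-1}$ as well, which is possible because $b_0\neq0$ may be assumed after reducing the order) gives a $\sigma$-stable subspace $W$ of dimension $\leq m$ containing $f$. Next, the commutation $\delta\sigma=\mu\sigma\delta$ implies that each $\sigma^j f$ satisfies a scalar linear differential equation of order $n$ obtained by conjugating $L$ by $\sigma^j$, so $V=\sum_{i\geq 0}\delta^i W$ is a $\CX(x)$-vector space of dimension $N\leq nm$, stable under both $\delta$ and $\sigma$. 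Choose a basis $y_1,\dots,y_N$ of $V$ and let $Y=(y_1,\dots,y_N)^T$; there exist $A\in\gl_N(\CX(x))$ and $B\in\GL_N(\CX(x))$ with $\delta Y=AY$ and $\sigma Y=BY$, $B$ being invertible because $\sigma$ preserves $\CX(x)$-linear independence (even in case M, where $\sigma$ is only injective). The identity $\delta\sigma=\mu\sigma\delta$ applied to $Y$, together with the $\CX(x)$-linear independence of the $y_i$, yields the consistency condition (\ref{eq2}).

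Applying Theorem~\ref{thm1}, there is $G\in\GL_N(\CX(x))$ such that $Z=G^{-1}Y$ satisfies $\delta Z=\tilde A Z$ with $\tilde A\in\gl_N(\CX)$. A fundamental matrix is $x^{\tilde A}=\exp(\tilde A\log x)$, whose entries (after passing to a Jordan basis of $\tilde A$) are finite $\CX$-linear combinations of $x^{\alpha}\log(x)^j$ with $\alpha$ an eigenvalue of $\tilde A$ and $j$ bounded by the size of the Jordan blocks. Since each $z_l$ is such a combination and $y_i=\sum_l G_{il}(x)z_l$, the function $f\in V$ admits a representation of the form (\ref{ratilog}) with $r_{ij}\in\CX(x)$. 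In case M, Theorem~\ref{thm1} guarantees moreover that the eigenvalues of $\tilde A$ are rational, so the $\alpha_i$ lie in $\QX$.

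For the converse, the $\CX(x)$-span $U$ of $\{x^{\alpha_i}\log(x)^j : i\in I,\ 0\le j\le K\}$ for any finite set $I\subset\CX$ (respectively $\QX$) and any $K$ is $\delta$-stable, and after closing $I$ under the action $\alpha\mapsto q\alpha\pmod{\ZX}$ (which is finite since $I\subset\CX/\ZX$ is finite in case Q by construction, and since $q$-iteration on $\QX/\ZX$ has finite orbits in case M), it is also $\sigma$-stable, using $\sigma(x^\alpha\log(x)^j)=q^\alpha x^\alpha(\log x+\log q)^j$ in case Q and $\sigma(x^\alpha\log(x)^j)=q^j x^{q\alpha}\log(x)^j$ in case M. Hence $\{\delta^i f\}_{i\geq 0}$ and $\{\sigma^j f\}_{j\geq 0}$ are both $\CX(x)$-linearly dependent, which gives the required scalar linear equations.

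Finally, to deduce the statement for $f\in\CX[[x]][x^{-1}]$, one uses the well-known $\CX(x)$-linear independence in $E$ of the family $\{x^{\alpha}\log(x)^j : \alpha\in F,\ j\geq 0\}$ for any system $F$ of representatives of $\CX/\ZX$. Writing $f$ uniquely in this canonical form, each summand $r_{ij}(x)x^{\alpha_i}\log(x)^j$ with $\alpha_i\notin\ZX$ or $j>0$ is absent from $\CX[[x]][x^{-1}]$ (it is not even a Laurent series in $x$), so the assumption $f\in\CX[[x]][x^{-1}]$ forces $r_{ij}\equiv 0$ except for the term with $\alpha_i=0$ and $j=0$, giving $f=r_{00}(x)\in\CX(x)$. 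The main obstacle I anticipate is the careful bookkeeping in step one: ensuring that $B\in\GL_N(\CX(x))$ and that the consistency identity really comes out of the commutation relation, in particular handling case M where $\sigma$ fails to be surjective and verifying that the orbits under $\alpha\mapsto q\alpha\pmod\ZX$ needed in the converse remain finite.
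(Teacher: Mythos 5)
Your overall strategy (package $f$ into a finite-dimensional $\delta$- and $\sigma$-stable $\CX(x)$-space, apply Theorem~\ref{thm1}, solve $\delta z=\tilde A z$) is exactly the paper's, and your treatment of cases Q and S and of the converse is sound. But there are two genuine gaps, both in case M.

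First, your assertion that $B\in\GL_N(\CX(x))$ ``because $\sigma$ preserves $\CX(x)$-linear independence (even in case M)'' is false. Injectivity of $\sigma$ on $E$ only gives preservation of $\CX$-linear independence; since $\sigma(\CX(x))=\CX(x^q)\subsetneq\CX(x)$, elements independent over $\CX(x)$ can become dependent after applying $\sigma$ --- the paper's example is $1$ and $x^{1/q}$, which map to $1$ and $x$. Concretely, $f=x^{1/q}$ satisfies $\delta f=\tfrac1q f$ and $\sigma^2 f=x^{q-1}\sigma f$, and for the resulting $2$-dimensional space the matrix $B$ is singular. The paper repairs this by passing to the stabilized term $W_s$ of the descending chain $W_\ell=\langle\sigma^\ell(W_0)\rangle$, where $\sigma(w_1),\dots,\sigma(w_t)$ again \emph{generate} $W_s$; the price is that one only gets $\sigma^s f$ (not $f$) as a $\CX(x)$-combination of the basis, and an extra step is needed to descend from $\sigma^s f$ to $f$ (a $\CX(x^{1/q^s})$-combination of terms $x^\alpha\log^jx$ is still of the form (\ref{ratilog}) because $\CX(x^{1/q^s})$ is spanned over $\CX(x)$ by fractional powers of $x$). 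Your argument as written breaks down at the very first system you build.

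Second, your deduction of the last statement (rationality of $f\in\CX[[x]][x^{-1}]$) from the first part is not valid: a formal Laurent series need not converge, so it is not an element of $E$ and the representation (\ref{ratilog}) cannot be invoked for it. The paper instead reruns Lemma~\ref{LS} and Theorem~\ref{thm1} with $E$ replaced by the algebra $\CX[[x]][x^{-1}]$ and solves $\delta z=\tilde A z$ formally: comparing coefficients forces $\tilde A z^{(\ell)}=\ell z^{(\ell)}$, hence $z\in\CX[x,x^{-1}]^t$ and in fact $t=1$, so $\sigma^s f\in\CX(x)$; in case M one then needs the root-of-unity averaging $\sigma^s f(x)=q^{-s}\sum_\ell g(xe^{2\pi i\ell/q^s})\in\CX(x^{q^s})$ to conclude $f\in\CX(x)$. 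Neither of these steps is replaced by your linear-independence argument.
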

\noindent{\bf Remark:} 1.\ { The Corollary can be extended to functions satisfying 
non-homogeneous systems 
$$L(f(x))=h_1(x),\ \ S(f(x))=h_2(x),$$
where $h_1(x)$ and $h_2(x)$ are of the form (\ref{ratilog}). Indeed, as the latter satisfy homogeneous
systems of the form (\ref{LS}), it is straightforward to eliminate them from the non-homogeneous 
system at the expense of increasing the orders $n$ and $m$ of the equations. A similar remark applies 
to the corresponding corollaries in case S and in cases 2S and 2Q in section \ref{Sec2a}. 
}\\[0.1in]
2.\ { Functions of the form (\ref{ratilog})} are elementary function. Algorithms for finding such solutions of linear difference equations are known (eg. \cite{DaSi}, \cite{PuSi2003}, Ch.~4) and can be modified to find solutions of this special type. Possible values of  $\alpha$ and relevant powers of $\log x$ can be calculated using effective procedures to determine canonical forms of such equations at singular points (\cite{PuSi2003}, Ch.~3.1). { Again a} similar remark applies to the corresponding corollaries
in  case S and in  cases 2S and 2Q of section \ref{Sec2a} (\cite{PuSi}).\\[0.1in]
{3.\ The final assertion of the above corollary corresponds to  results of Ramis \cite{Ramis92} in case Q and of B\'ezivin \cite{Bez94} in case M. Their proofs proceed by examining the asymptotic behavior of solutions of the two scalar linear differential equations rather than our approach.
{In \cite{Ramis92}, Ramis assumed that $q\neq0$ and $q$ is {transcendental} if $|q|=1$. This condition can be reduced to $|q|\neq0,1$.
The latter was needed to ensure that $q^nx\to 0$ or $q^nx\to\infty$ when $n\to\infty$
and allowed asymptotic results for $q$-difference equations to be applied.
As mentioned above, our approach only requires that
$\sigma(x) = qx, q \neq0$ has no periodic points other than the fixed points.}
}\\[0.1in]
{4.\ Although we use  Theorem~\ref{thm1} to prove the final statement of this corollary, one can prove this directly from Lemmas~\ref{lem1} and~\ref{lem1a} below as noted in the remark following Lemma~\ref{lem1a}.}

\begin{prf} {We begin by proving the assertion concerning a function $f(x) \in E$. }Let $W_0$ be the $\CX(x)$-subspace of  $E$ spanned by 
\[\{\sigma^j\delta^i(f(x))\}\]
where $0\leq i \leq n-1$ and $0 \leq j \leq m-1$. Using the fact that $\delta\sigma=\mu\sigma\delta$ and the equation $L(f(x)) = 0$, one sees that $W_0$ is left invariant under $\delta$.  Similarly, using $S(f(x)) = 0$,  one sees that $W_0$ is invariant under $\sigma$. Unfortunately, $\sigma$ does not preserve linear independence over $\CX(x)$ in case M -- just consider $1$ and $x^{1/q}$.
Therefore we consider now the vector spaces $W_\ell$ generated by the elements of $\sigma^\ell(W_0)$, $\ell=0,1,...$. These are again invariant
under $\delta$ and $\sigma$. As they form a
descending chain of finite dimensional $\CX(x)$-vector spaces there must be a first index $s$ such $W_s=W_{s+1}$. It is easy to see that we then have that  $W_\ell=W_{\ell+1}$ for all $\ell\geq s$.
In the sequel, we consider $W_s$ and omit the index $s$.
 
Let $w_1, \ldots , w_t$ be a $\CX(x)$-basis of $W$ and let $w = (w_1, \ldots , w_t)^T$.  We  have that
 \begin{equation}\label{eq5}
\begin{aligned}
\delta(w) & = & Aw\\
\sigma(w) & = & Bw
\end{aligned} 
\end{equation}
for some $A\in \gl_t(k), B\in \GL_t(k)$ because $\sigma(w_1),...,\sigma(w_t)$ again generate $W$.  
We claim that $A$ and $B$ satisfy (\ref{eq2}).  To see this,  note that 
 \[0=\mu\sigma\delta(w) - \delta\sigma(w) =(\mu\sigma (A)B-BA-\delta(B))w.\]
Since the entries of $w$ are linearly independent over $k$, we have (\ref{eq2}).

{Before we continue with the proof of this corollary, we remark that the above argument} does not use special properties of $\delta$ and $\sigma$ and will again be useful later.
We note it as
\begin{lemma}\label{LS} Let $C$  be an algebraically closed field, $k=C(x)$, $E$ a $k$-algebra, $\delta$ a derivation on $E$ annihilating $C$ satisfying $\delta(x)\in k$
and $\sigma$ a $C$-algebra endomorphism on $E$ satisfying $\sigma(k)\subset k$ and $\delta\sigma=\mu\sigma\delta$ for some
$\mu\in C^*$. Suppose that there exists an $f\in E$ satisfying a system (\ref{sysLS}) of equations. Then there exist $s,t\in\NX$ \footnote{$\NX$ denotes the set of non-negative integers in the present text.},
a solution vector { $w=(w_1,...,w_t)^T\in E^t$} of a system (\ref{eq5}) satisfying the consistency condition (\ref{eq2}) and $r_i\in k$, $i=1,..,t$,
such that $\sigma^s f=\sum_{i=1}^t r_i w_i$.
\end{lemma}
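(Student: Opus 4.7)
My plan is to extract and slightly generalise the filtration argument already carried out in the proof of Corollary~\ref{cor0}, keeping only the abstract hypotheses $\delta\sigma = \mu\sigma\delta$, $\sigma(k)\subseteq k$ and $\delta(x)\in k$ (which by the Leibniz rule forces $\delta(k)\subseteq k$). First I would form the finite-dimensional $k$-subspace $W_0\subseteq E$ generated by $\{\sigma^j\delta^i f : 0\le i<n,\ 0\le j<m\}$. Using $L(f)=0$ together with the commutation $\delta\sigma^j = \mu^j\sigma^j\delta$ I would verify that $W_0$ is $\delta$-stable, and using $S(f)=0$ together with $\sigma(k)\subseteq k$ and $\delta(k)\subseteq k$ (so that applying $\sigma$ to the coefficients of $L$ and $\delta$ to the coefficients of $S$ keeps us in $k$) that it is $\sigma$-stable.

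The main obstacle is that $\sigma$ need not preserve $k$-linear independence on $W_0$, as the Mahler example $\{1, x^{1/q}\}$ already shows; consequently one cannot directly read off an invertible matrix $B$ from the action of $\sigma$ on a basis of $W_0$. My plan to overcome this is the descending-chain device: set $W_\ell := \mathrm{span}_k\,\sigma^\ell(W_0)$. Each $W_\ell$ is again $\delta$- and $\sigma$-stable (for $\delta$-stability one commutes $\delta$ past $\sigma^\ell$ using the scalar $\mu^\ell$), and the inclusion $\sigma(W_0)\subseteq W_0$ gives $W_{\ell+1}\subseteq W_\ell$. Finite-dimensionality of $W_0$ therefore forces the chain to stabilise at some least index $s$, at which point I set $W := W_s = W_{s+1}$.

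The heart of the argument will then be the claim that, for any $k$-basis $w_1,\dots,w_t$ of $W$, the images $\sigma(w_1),\dots,\sigma(w_t)$ again form a $k$-basis of $W$. They lie in $W$, and the identity $W = W_{s+1} = \mathrm{span}_k\,\sigma(W)$, using $\sigma(k)\subseteq k$ to collapse $\sigma(k)$-spans into $k$-spans, shows that they already span $W$; since there are $\dim W$ of them, they are automatically $k$-independent. Writing $w=(w_1,\dots,w_t)^T$ I would define $A\in\gl_t(k)$ and $B\in\GL_t(k)$ by $\delta(w) = Aw$ and $\sigma(w) = Bw$; equating $\delta\sigma(w) = \mu\sigma\delta(w)$ and invoking $k$-independence of the $w_i$ would then yield the consistency condition~(\ref{eq2}). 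Finally, since $\sigma^s(f)\in\sigma^s(W_0)\subseteq W$, the required expression $\sigma^s f = \sum_{i=1}^t r_i w_i$ with $r_i\in k$ is immediate.
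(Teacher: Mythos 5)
Your proposal is correct and follows essentially the same route as the paper: the paper's proof of Lemma~\ref{LS} is exactly the filtration argument embedded in the proof of Corollary~\ref{cor0} (form $W_0$, check $\delta$- and $\sigma$-stability, pass to the stabilized term $W_s$ of the descending chain $W_\ell=\mathrm{span}_k\,\sigma^\ell(W_0)$, read off $A$ and $B$ from a basis, and deduce consistency from $\mu\sigma\delta(w)-\delta\sigma(w)=0$ and the $k$-independence of the $w_i$). Your added justifications (that $\delta(k)\subseteq k$ follows from $\delta(x)\in k$ via Leibniz, and that the $\sigma(w_i)$ are a basis because they span $W$ and number $\dim W$) are correct elaborations of steps the paper leaves implicit.
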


We now apply Theorem~\ref{thm1} to the equations $\delta(Y) =AY, \sigma(Y) = BY$.  We conclude that there is a gauge transformation $Z=GY$ that transforms this system to a system $\delta(Z)  =  \tilde{A}Z,
\sigma(Z)  =  \tilde{B}(x)Z$ where $\tilde A$ is a constant matrix (with rational eigenvalues in case $M$).  Letting $z=(z_1,\ldots , z_t)^T = Gw$ we see that 
$z\in E^t$ satisfies $\delta z= \tilde A z$ and hence $z=x^{\tilde A}C$ for some constant matrix $C$.
Therefore  the $z_i$ are of the desired form. 
 Since the $z_i$ are again a $\CX(x)$-basis of $W$, there exists a non-negative integer $s$ such that $\sigma^s(f)$ is a $\CX(x)$-linear
combination of the  $z_i$. In case Q, it follows immediately that  $f$ is also a $\CX(x)$-linear combination of 
terms of the wanted form. In  case $M$, we find that $f$ is a $\CX(x^{q^{-s}})$-linear combination of terms of the desired form, that is, it is itself of that form. Thus we have proved  the first statement of the corollary.

To prove the second part of the corollary, first assume that $f=x^{\alpha}\log(x)^j$ with $\alpha\in\CX$, $j\in\NX$.  A simple calculation shows that the operator $L = (\delta -\alpha)^{j+1}$ annihilates $f$.

In case Q, we obtain that $S = (\sigma-q^{\alpha})^{j+1}$ also annihilates $f$.

In case M, it is required that $\alpha$ be rational. As there are only finitely many rationals in $[0,1[$ 
with the same denominator as $\alpha$, there must exist
positive integers $m\neq n$ such that $(q^m-q^n)\alpha=:r\in\ZX$. Then $\sigma^m(f)=q^{j(m-n)}x^r\sigma^n(f)$ and we have found a $\sigma$-difference equation for $f$.

The general case follows from the fact that sums and products with rational functions of solutions of differential 
{or $\sigma$-difference equations, respectively, } again satisfy such equations.

{We now turn to the assertion concerning a function $f(x) \in \CX[[x]][x^{-1}]$. We can again use Lemma \ref{LS} and apply Theorem \ref{thm1}.
{ Following the proof of the first part of  this corollary }{we will }find the solutions of
$\delta z=\tilde Az$ in $\CX[[x]][x^{-1}]^t$. Clearly the coefficient $z^{(\ell)}$ of $z$ in front of $x^\ell$ must satisfy
$\tilde A\,z^{(\ell)}=\ell\,z^{(\ell)}$. Hence $z^{(\ell)}$ can only be different from 0 for finitely many $\ell$ and so {$z\in\CX[x,x^{-1}]^t\subset\CX(x)^t$}. Since the entries in the vector $z$ form a $\CX(x)$-basis of $W$, we must have $t=1$. Therefore $z$ is a scalar in $\CX(x)$ and so  $\sigma^s(f)$ is also rational, $\sigma^s(f)(x)=g(x)\in\CX(x)$.  In case Q, the proof is finished since we can immediately conclude that $f(x)\in\CX(x)$.
In case M, we have that
$\sigma^s(f)$ is in $\CX\{x^{q^s}\}[x^{-q^s}]$ and hence invariant under 
the substitutions $x\to x\exp(2 \pi i \ell/q^s)$, $\ell=0,...,q^s-1$. Therefore we also have that
$$\sigma^s(f)(x)=q^{-s}\sum_{\ell=0}^{q^s-1}g(x\exp(2 \pi i \ell/q^s) )\in\CX(x^{q^s}).$$ 
Hence $f\in\CX(x)$.}
\end{prf}

{ In the shift case, the corollaries obtained are slightly different as we cannot consider the difference equation on 
$\hat \CX$ or $\CX[[x]][x^{-1}]$. It is possible here to consider the difference equation for meromorphic functions of $\CX$
and for formal Laurent series in $1/x$, that is in $\CX[[x^{-1}]][x]$. Again $\CX(x)$ is considered as a subset of
$\CX[[x^{-1}]][x]$, where the inclusion is given by series expansion at $x=\infty$.}
\begin{cor}\label{cor0a} Consider $\delta=d/dx$ and $\sigma$ induced by $\sigma(x)=x+1$ . 
If $f(x)$ is a meromorphic function {on some horizontal strip $\{x\in\CX\mid m<\Im x<M\}$}
satisfying the system (\ref{sysLS}) of a linear differential and difference equations
with coefficients in $\CX(x)$ then 
\[ f(x)  = \sum_{i=1}^t r_i(x) e^{\alpha_i x} \]
where $\alpha_i \in \CX$ and $r_i(x) \in \CX(x)$. Conversely, any such function satisfies a pair of linear differential and difference equations with rational coefficients.

{ If $f(x)\in \CX[[x^{-1}]][x]$  satisfies the system (\ref{sysLS}) of a linear differential and difference equations
with coefficients in $\CX(x)$ then $f(x)$ is rational. }
\end{cor}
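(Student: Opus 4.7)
The strategy parallels the proof of Corollary~\ref{cor0}, specialized to case~S. For the first claim, let $E$ denote the field of meromorphic functions on the given strip $\{m<\Im x<M\}$. Since the strip is invariant under $x\mapsto x+1$, both $\sigma$ and $\delta=d/dx$ preserve $E$; moreover the $\delta$-constants of $E$ equal $\CX$ by connectedness. Applying Lemma~\ref{LS} to the system (\ref{sysLS}) in $E$ produces $s\in\NX$, a consistent first-order system of the form (\ref{eq5}) with a solution vector $w=(w_1,\ldots,w_t)^T\in E^t$, and $r_1,\ldots,r_t\in\CX(x)$ satisfying $\sigma^s f=\sum_i r_i w_i$. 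Theorem~\ref{thm1} in case~S then supplies $G\in\GL_t(\CX(x))$ such that $z=Gw$ satisfies $\delta z=\tilde A z$ and $\sigma z=\tilde B z$ with $\tilde A=\diag(\alpha_1,\ldots,\alpha_t)\in\gl_t(\CX)$ and $\tilde B\in\GL_t(\CX)$ constant.

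Each coordinate $z_i\in E$ satisfies the scalar equation $\delta z_i=\alpha_i z_i$, so $z_i=C_i e^{\alpha_i x}$ for some $C_i\in\CX$. Substituting $w=G^{-1}z$ expresses $\sigma^s f$ as $\sum_j \tilde r_j(x) e^{\alpha_j x}$ with $\tilde r_j\in\CX(x)$, and applying $\sigma^{-s}$ (available since $\sigma$ is bijective in case~S) produces $f(x)=\sum_j e^{-s\alpha_j}\tilde r_j(x-s)e^{\alpha_j x}$, which is of the claimed form after relabeling. For the converse, the $\CX(x)$-module $V=\sum_i\CX(x)\,e^{\alpha_i x}$ is finite-dimensional and stable under $\delta$ (since $\delta(r\,e^{\alpha x})=(r'+\alpha r)e^{\alpha x}$) and under $\sigma$ (since $\sigma(r\,e^{\alpha x})=e^\alpha r(x+1)\,e^{\alpha x}$); any $f\in V$ therefore admits scalar linear differential and difference equations with coefficients in $\CX(x)$ coming from $\CX(x)$-linear dependencies among the iterates $\delta^k f$ and $\sigma^k f$.

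For the final assertion, rerun the same argument with $E=\CX[[x^{-1}]][x]$, which is preserved by $\sigma$ via the expansion $1/(x+1)=\sum_{k\geq 1}(-1)^{k-1}x^{-k}$ and obviously by $\delta$. The components $z_i\in\CX[[x^{-1}]][x]$ satisfy $\delta z_i=\alpha_i z_i$; writing $z_i=\sum_{k\leq N}a_k x^k$ with $a_N\neq 0$ and comparing the coefficient of $x^N$ gives $\alpha_i a_N=0$, so $\alpha_i=0$, after which $\delta z_i=0$ forces $z_i$ to be a constant in $\CX$. Since the $z_i$'s are $\CX(x)$-linearly independent yet all lie in $\CX$, we must have $t=1$; consequently $\sigma^s f=r_1(x)z_1\in\CX(x)$, and applying $\sigma^{-s}$ gives $f\in\CX(x)$.

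The main delicate point in this program is the preparatory bookkeeping for each ambient algebra $E$: one must verify that $\sigma$ and $\delta$ preserve $E$, agree with the correct action on $\CX(x)\subset E$, satisfy $\delta\sigma=\sigma\delta$, and that the $\delta$-constants of $E$ remain $\CX$, so that Lemma~\ref{LS} genuinely applies and the consistency condition extracted actually governs $E$-valued solutions. Once that is secured, Theorem~\ref{thm1} in case~S reduces everything to the scalar equation $\delta z_i=\alpha_i z_i$, whose solutions in each category can be read off explicitly.
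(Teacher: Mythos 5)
Your proposal is correct and follows essentially the same route as the paper: apply Lemma~\ref{LS} to build a consistent first-order system from (\ref{sysLS}), invoke Theorem~\ref{thm1} in case S to reduce to $\delta z=\tilde A z$ with $\tilde A$ constant diagonal, and then solve that scalar equation in each ambient algebra $E$ (exponentials on the strip, constants in $\CX[[x^{-1}]][x]$, with $t=1$ forcing rationality in the formal case). The only divergence is in the converse: the paper explicitly exhibits the annihilating operators $L=\prod_i(\delta-\alpha_i)^{n_i+1}$ and $S=\prod_i(\sigma-e^{\alpha_i})^{n_i+1}$ for polynomial $r_i$ and then conjugates by multiplication by a common denominator, whereas you argue abstractly that $V=\sum_i\CX(x)e^{\alpha_i x}$ is a finite-dimensional $\delta$- and $\sigma$-stable $\CX(x)$-space so that dependencies among the iterates $\delta^k f$ and $\sigma^k f$ yield the two scalar equations; both arguments are valid, yours being shorter and the paper's giving the operators explicitly.
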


{\noindent{\bf Remark:} In \cite{BG96}, B\'ezivin and Gramain consider {\it entire} solutions of linear differential/dif\-ference equations of the type considered in Corollary~\ref{cor0a} and show that such solutions must be of the form described. Their proof ultimately depends on an analytic result of Kelleher and Taylor describing the growth properties of a set of entire functions equivalent to the property  that they generate an ideal equal to the whole ring of entire functions.   A crucial part of their proof also involves reduction to equations with constant coefficients but in a different manner than the proof presented here. They also give conditions on the equations that guarantee that the $r_i(x)$ appearing in the expression for $f(x)$ are polynomials.

Such a condition can also be given here. Let $\ell(x)$ be the least common denominator of the coefficients
$a_j(x)$, $j=0,...,n-1$ of $L$ in (\ref{sysLS}), $r(x)$ the least common denominator of the coefficients $b_j(x)$, $j=0,...,m-1$
of $S$.  By the $\CX(x)$-linear independence of $e^{\alpha_i x}$, $i=1,...,t$, 
also the individual terms $r_i(x) e^{\alpha_i x}$ are solutions of the scalar equations (\ref{sysLS}). Therefore
the poles of some $r_i(x)$ must be among the zeroes of $\ell(x)$.
Now let $\alpha$ be a pole of some $r_i(x)$ with minimal real part. Then it is a zero of $\ell(x)$ as seen above,
but in view of {$\sigma^{-m}S(r_i(x) e^{\alpha_i x})=0$}, $\alpha$ must also be a zero of $\sigma^{-m}r$.
As a consequence, all $r_i$ must be polynomials, if $\ell$ and  $\sigma^{-m}r$ have no common zero.}

\begin{prf} Using Lemma \ref{LS} and Theorem \ref{thm1}, we obtain that 
$f(x)$ is a $\CX(x)$-linear combination
of solutions of    $\delta z=\tilde{A}z$ with diagonal 
constant $\tilde{A}$. This yields the first part of the corollary.

To prove the second part of the corollary, first assume that the $r_i$ are polynomials with  $\deg(p_i) = n_i$.  A simple calculation shows that the operators $L = \prod_{i=1}^t(\delta -\alpha_i)^{n_i+1}$ and $S = \prod_{i=1}^t (\sigma-e^{\alpha_i})^{n_i+1}$ annihilate $f$. 
In general, let $p(x)$ be a common denominator of the $r_i$ and let $\tilde L$ and $\tilde S$ be the operators annihilating $p(x)f$.
Then clearly $L=\frac1{p(x)}\tilde L\circ M_{p(x)}$ and $S=\frac1{\sigma^n p(x)}\tilde S\circ M_{p(x)}$, $n=t+\sum n_i$, $M_{p(x)}$ the multiplication operator, satisfy the conditions of the theorem.

{ For the last part of the corollary we use again  Lemma \ref{LS} and Theorem \ref{thm1} and obtain again that 
$f(x)$ is a $\CX(x)$-linear combination
of solutions of    $\delta z=\tilde{A}z$, with diagonal 
constant $\tilde{A}$, now in $\CX[[x^{-1}]][x]^t$. 
Hence $z$ is also constant  
\footnote{By the way, as the entries of $z$ form a $\CX(x)$-basis of the space $W$ of the proof of Lemma \ref{LS}, 
we have $t=1$.}. As a consequence, $f(x)$ is in $\CX(x)$.}
\end{prf}

Another consequence of Theorem \ref{thm1} in case S concerns the {\it time-1-operator} 
associated to a system $\frac{dy}{dx}=A(x)y$, $A\in\CX(x)$.
It is defined by $T_A(x)=Y(x+1,x)$, where $Y(t,x)$ denotes the solution matrix of $\frac{d}{dt}Y(t,x)=A(t)Y(t,x)$ satisfying $Y(x,x)=I$.
It is holomorphic on $\CX\setminus\cup_{\ell=1}^L[x_\ell-1,x_\ell+1]$ if $x_\ell$, $\ell=1,...,L$ are the singularities of $A(x)$. By the theorem on the dependence of solutions upon initial conditions, it is readily verified that
$W(t,x)=\frac{\partial Y}{\partial x}(t,x)$ satisfies $\frac{d}{dt}W(t,x)=A(t)W(t,x)$ and $W(x,x)=-A(x)$ and 
hence $\frac{\partial Y}{\partial x}(t,x)=-Y(t,x)A(x)$. Thus we find that
$$\frac d{dx} T_A(x)=A(x+1)T_A(x)-T_A(x)A(x).$$
The system
\begin{equation}\label{time1}\begin{array}{rcl}
\frac d{dx} y &=&A(x)y\\
y(x+1)&=&T_A(x)y(x)\end{array}
\end{equation}
therefore satisfies the consistency condition (\ref{eq2}). 
This implies that $T_A(x)$ may be continued analytically to the universal covering of $\CX\setminus\{\xi,\xi+1\mid \xi \mbox{ a singularity of }A(x)\}$.
It also allows us to show
\begin{cor}\label{cortime} 
The time-1-operator defined for a system of linear differential equations $\frac{dy}{dx}=A(x)y$, $A(x)\in\gl_n(\CX(x))$, 
has rational functions as entries if and only if the system is equivalent over $\CX(x)$ to a system with a constant
diagonal coefficient matrix.\end{cor}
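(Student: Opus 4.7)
The plan is to derive Corollary~\ref{cortime} directly from Theorem~\ref{thm1} in case S, using the observation already made in the paragraph preceding the statement: the pair $(A(x), T_A(x))$ automatically satisfies the consistency condition (\ref{eq2}) because of the ODE $\frac{d}{dx} T_A(x) = A(x+1) T_A(x) - T_A(x) A(x)$ derived there.

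For the ``only if'' direction, I assume that $T_A(x) \in \GL_n(\CX(x))$. Then (\ref{time1}) is an instance of system (\ref{eq1}) in case S with $B := T_A$, and is consistent by the identity recalled above. Theorem~\ref{thm1} (case S) then delivers a gauge matrix $G \in \GL_n(\CX(x))$ producing an equivalent system with constant diagonal coefficient matrix $\tilde A$ (together with a constant upper triangular $\tilde B$ commuting with $\tilde A$, which plays no role here). This is exactly the stated conclusion.

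For the ``if'' direction, let $G \in \GL_n(\CX(x))$ be a gauge carrying $\frac{dy}{dx} = A(x) y$ to $\frac{dz}{dx} = \tilde A z$ with constant diagonal $\tilde A$. The fundamental matrix of the transformed system normalized by $Z(x,x) = I$ is $Z(t,x) = \exp(\tilde A (t-x))$. Transporting back via $y = G(x) z$ and invoking uniqueness of the solution with prescribed value at $t = x$ yields
\[
Y(t,x) = G(t) \exp\bigl(\tilde A (t-x)\bigr) G(x)^{-1},
\]
so in particular
\[
T_A(x) = Y(x+1, x) = G(x+1) \exp(\tilde A)\, G(x)^{-1} \in \GL_n(\CX(x)).
\]

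Essentially no new obstacle arises: Theorem~\ref{thm1} does all the substantive work in the forward direction, and the reverse direction is a short calculation with the gauge transformation. The only point needing minor care is to record that the hypothesis ``$T_A$ has rational entries'' combined with the invertibility of $T_A(x)$ (automatic, since $\det T_A(x) = \exp\int_x^{x+1} \operatorname{tr} A$ is nonzero wherever $A$ is holomorphic) places $T_A$ in $\GL_n(\CX(x))$, so that Theorem~\ref{thm1} genuinely applies.
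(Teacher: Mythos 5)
Your proposal is correct and follows essentially the same route as the paper, whose entire proof is ``put $B(x)=T_A(x)$ and apply Theorem~\ref{thm1}''; your ``only if'' direction is exactly that, and your explicit computation $T_A(x)=G(x+1)\exp(\tilde A)G(x)^{-1}$ for the converse just spells out what the paper leaves implicit.
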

\begin{prf}It suffices to put $B(x)=T_A(x)$ and to apply Theorem 1.\end{prf}
An analogous corollary can be stated for the ``$q$-multiplication-operator'' that can be defined for a system 
$x\frac{dy}{dx}y=A(x)y$. Details are left to the reader.

In  case M, the statement of Theorem 1 is not entirely satisfactory, because the matrix $B(x)$ obtained is 
not constant. This can be repaired by changing the base field. Consider the field 
$K=C(\{x^{1/\ell}\mid \ell\in\NX\})$ (see \cite{DHR15}) containing all fractional powers of $x$. This field also has the advantage 
that $\sigma$, now defined by mapping $x^\alpha$ to $x^{q\alpha}$ for all rational $\alpha$, is an automorphism\footnote{ For this property it would not be necessary to have all fractional powers of $x$ in the field. The field $K_0 = C(\{x^{1/q^\ell}\mid \ell\in\NX\})$ would suffice.}. 
{ The derivation  $\tilde\delta=x\log(x)\frac {d}{dx}$ and the automorphism  $\sigma$ mapping $x^\alpha$ to $x^{q\alpha}$,
$\log(x)$ to $q\log(x)$  commute on the base field $K(\log x)$ ({\it i.e.} $\mu = 1$). Furthermore,  as} the consistency conditions for the couple $(\delta,\sigma)$ and the couple
$(\tilde\delta,\sigma)$ are equivalent we do not consider $\tilde\delta$ here.
\begin{cor}\label{corMK} Consider the system  (\ref{eq1}) in case M with $A\in\gl_n(K),B\in\GL_n(K)$ satisfying the consistency condition (\ref{eq2}).
It is equivalent over $K$  to a system (\ref{eq3})
with nilpotent constant $\tilde{A}\in \gl_n(C)$ and  constant $\tilde B\in\GL_n(C)$ satisfying
$q\tilde A\tilde B=\tilde B \tilde A$.
It is equivalent over $K(\log(x))$  to a system (\ref{eq3})
with $\tilde{A}=0$ and  constant $\tilde B\in\GL_n(C)$.
\end{cor}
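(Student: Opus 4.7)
My plan is to apply Theorem~\ref{thm1} first to reduce to a system in which $\tilde A\in\gl_n(C)$ has eigenvalues rational in $[0,1[$ and $\tilde B\in\GL_n(C[x,x^{-1}])$ has the prescribed exponent structure, and then to perform two further ``exponent-absorbing'' gauge transformations, one over $K$ and the next over $K(\log x)$.

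For the first statement, write the Jordan decomposition $\tilde A=S+N$ with $[S,N]=0$, $S$ semisimple and $N$ nilpotent. Since the eigenvalues of $S$ are rational, $G_1:=x^{-S}\in\GL_n(K)$. A direct computation gives $\delta(G_1)G_1^{-1}=-S$, and as $x^{-S}$ is a polynomial in $S$ it commutes with $N$, so the gauge-transformed derivation matrix is $-S+(S+N)=N$, which is nilpotent. The transformed shift matrix is $\tilde B_1:=\sigma(G_1)\tilde B\,G_1^{-1}=x^{-qS}\tilde B\,x^S$. To show $\tilde B_1\in\GL_n(C)$, decompose $C^n=\bigoplus_\lambda V_\lambda$ into the generalized eigenspaces of $\tilde A$ and expand $\tilde B=\sum_m\tilde B_m x^m$. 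Matching powers of $x$ in the consistency identity $\delta(\tilde B)=q\tilde A\tilde B-\tilde B\tilde A$ (now with constant $\tilde A$) yields the Sylvester-type equation
\[q\tilde A\,\tilde B_m=\tilde B_m(\tilde A+mI).\]
Restricted to the $(V_\mu,V_\lambda)$ block, the left operator has unique generalized eigenvalue $q\mu$ and the right has $\lambda+m$, forcing the block to vanish unless $m=q\mu-\lambda$. The corresponding block of $\tilde B_1$ is $x^{\lambda-q\mu}$ times that of $\tilde B$, so every surviving monomial becomes constant. Consistency for the new system with $\tilde A_{\text{new}}=N$ then reads $0=\delta(\tilde B_1)=qN\tilde B_1-\tilde B_1 N$, which is the stated commutation relation.

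For the second statement, extend scalars to $K(\log x)$, on which $\delta(\log x)=1$ and $\sigma(\log x)=q\log x$, and apply the gauge $G_2:=x^{-N}=\exp(-N\log x)$, which is polynomial in $\log x$ because $N$ is nilpotent. Then $\delta(G_2)G_2^{-1}+G_2 N G_2^{-1}=-N+N=0$, so the new derivation matrix vanishes. For the new shift matrix $\tilde B_2=x^{-qN}\tilde B_1\,x^N$, expand both exponentials and use the iterated relation $\tilde B_1 N^l=q^l N^l\tilde B_1$ (from $qN\tilde B_1=\tilde B_1 N$) to collect all factors of $N$ on the left of $\tilde B_1$. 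Grouping by total degree $m=k+l$ in $\log x$ gives
\[\tilde B_2=\sum_{m\geq 0}\frac{(\log x)^m}{m!}\left(\sum_{k=0}^{m}\binom{m}{k}(-q)^k q^{m-k}\right)N^m\tilde B_1=\sum_{m\geq 0}\frac{(q-q)^m}{m!}(\log x)^m N^m\tilde B_1=\tilde B_1\in\GL_n(C).\]

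The main obstacle is the localization argument used in the first step: one must refine the global exponent constraint supplied by Theorem~\ref{thm1} to the block-level statement that each monomial of $\tilde B$ on the $(V_\mu,V_\lambda)$ block has exponent \emph{exactly} $q\mu-\lambda$, and not merely $q\mu'-\lambda'$ for some other pair of eigenvalues. The Sylvester-equation argument above supplies this. Once this localization is in hand, the remaining work reduces to the standard identities $\delta(x^{-S})=-Sx^{-S}$, $\sigma(x^{-S})=x^{-qS}$, and their analogues for $x^{-N}$, together with the combinatorial identity $(q-q)^m=0$.
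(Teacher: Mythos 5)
Your proof is correct and follows essentially the same route as the paper's: apply Theorem~\ref{thm1}, strip off the semisimple part of $\tilde A$ by a gauge transformation $x^{-S}\in\GL_n(K)$, then strip off the nilpotent part by $x^{-N}=\exp(-N\log x)$ over $K(\log x)$, using $q\tilde A\tilde B=\tilde B\tilde A$ exactly as the paper does. The one step you skip is the initial reduction: Theorem~\ref{thm1} is stated for coefficient matrices over $C(x)$, whereas here $A,B\in\gl_n(K)$, so you must first substitute $x=t^N$ (the entries lie in $C(x^{1/N})$ for some $N$, and the substituted system is again of case M type with the consistency condition preserved) before the theorem applies; the paper handles this in its first sentence. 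The only other difference is organizational and harmless: the paper first gauges by $\diag(x^{r_1},\dots,x^{r_n})$ and then kills the nonconstant Fourier coefficients of the resulting $\tilde B$ by noting that $X\mapsto q\tilde AX-X\tilde A$ is a nilpotent operator once $\tilde A$ is nilpotent, whereas you localize the exponents of the old $\tilde B$ blockwise on generalized eigenspaces via the Sylvester equation and verify that conjugation by $x^{-qS}$ and $x^{S}$ cancels each exponent exactly -- both arguments are valid and rest on the same consistency identity.
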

\begin{prf} Making a change of variables $x=t^N$ with a suitable positive integer $N$, if necessary, we can assume that $A,B$ 
have entries in $C(x)$. By Theorem 1, there is a gauge transformation changing (\ref{eq1}) into $\delta U=\bar A U,\,\sigma U=\bar B(x) U$  
with constant $\bar A$ having rational eigenvalues. By a constant transformation, we can assume {that} $\bar A$ is in Jordan canonical form.
If $D=\diag(r_1,...,r_n)$ is the diagonal of $\bar A$, then $U=\diag(x^{r_1},...,x^{r_n}) Z$ is a gauge transformation in
 $\GL_n(K)$ changing the system into (\ref{eq3}) where now $\tilde A=\bar A-D$ is nilpotent. The matrix $\tilde B(x)$ has entries in
$C[x^{1/N},x^{-1/N}]$ for some positive integer  $N$ and satisfies the consistency condition. Writing
$\tilde B(x)=\sum_{m={ -t}}^{t} B_m x^{m/N}$ yields $q\tilde A\,B_m-B_m\tilde A=\frac mN B_m$ for any $m\in\{-t,\ldots ,t\}$.
Since ${\tilde A}$ is nilpotent, so is the operator mapping $X$ to $q\tilde A\,X-X\tilde A$. Hence $B_m=0$ unless $m=0$.
This proves the first statement.

In order to prove the second statement it suffices to use the first and to make the gauge transformation $V=\exp(-\log(x) \tilde A) Z$.
Observe that the consistency condition for $\tilde A,\tilde B$ is $q\tilde A\tilde B=\tilde B\tilde A$ and that is implies
$\exp(q\log(x) \tilde A)\tilde B=\tilde B \exp(\log(x) \tilde A)$.\end{prf}

We now turn to a proof of Theorem~\ref{thm1}.  
{{In} a first step, the consistency condition will be used in the form of Lemma \ref{keyobs}} to characterize the singularities of the equation $\delta(Y) = A(x)Y$.  {We say that  a singular point $x_1$ of the equation $\delta(Y) = A(x)Y$ is an {\it apparent singular point} if there is a fundamental solution matrix  whose entries are in 
{$C\{x-x_1\}[(x-x_1)^{-1}]$}
\footnote{Although some authors use this term to mean that the equation has a  fundamental solution matrix {\it holomorphic} at $x_1$, we will use the above extended meaning throughout this work.}.  Note that this condition is the same {as} saying that there is an equivalent system for which $x_1$ is a regular point.  To see this note that truncating the entries of such a fundamental solution matrix at a sufficiently high power, we obtain a matrix $G$ such the gauge transformation $Y=GZ$ leads to an equivalent system for which $x_1$ is a regular point.}  In Lemma~\ref{lem1} we show that the finite singular points must be apparent  singular points 
(with the exception of 0 in  cases Q and M).
In  cases Q and M, Lemma~\ref{lem1a} shows the effect of 
{consistency, that is Lemma \ref{keyobs}}, on 
the structure of local solutions at these points.  
Finally, monodromy theory is used to show that the differential 
equation is equivalent to $\delta z=Mz$  with some constant matrix $M$ 
and that the spectrum of $M$ has the desired properties.
In case S,  Lemma~\ref{lem1b} states the first consequence for
the solutions at infinity. Then we have to work considerably harder to arrive at 
the wanted reduced form of the differential equation (c.f., Proposition \ref{prop1}.) 
The rest of the conclusions of Theorem~\ref{thm1} will follow easily.

The entries of $A$ and $B$ have coefficients that lie in a countable algebraically closed field so we may replace $C$, 
if necessary, with a countable algebraically closed field, again denoted by $C$. 
Furthermore, we may assume that $C$ is a subfield of the complex numbers $\CX$. 
In the rest of the proof it can be verified that all equivalent systems can be chosen to have entries in this $C(x)$.

\begin{lemma}\label{lem1} Consider $\delta,\sigma$ as in  cases S,Q or M 
 and a system (\ref{eq1}) satisfying the consistency condition 
(\ref{eq2}). Then each finite singular point $\xi$ of the differential 
equation $\delta Y=A\,Y$, except maybe 0 in  cases Q and M, 
is  an apparent singular point. 
\end{lemma}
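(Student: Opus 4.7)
The plan is to iterate Lemma~\ref{keyobs}: for every $N\in\NX^*$ the system $\delta Y=AY$ is equivalent, via the rational gauge matrix $T_N=\sigma^{N-1}(B)\,\sigma^{N-2}(B)\cdots\sigma(B)\,B\in\GL_n(k)$, to the system $\delta Z=\mu^N\sigma^N(A)Z$. Since $T_N$ and $T_N^{-1}$ have rational entries, they are meromorphic at every finite point; consequently, if for some $N$ the coefficient matrix $\mu^N\sigma^N(A)$ is holomorphic at $\xi$ --- so that $\xi$ is an ordinary point of the transformed equation --- then any local holomorphic fundamental solution $Z(x)$ near $\xi$ yields $Y(x)=T_N(x)^{-1}Z(x)$, a meromorphic fundamental solution of $\delta Y=AY$ in a punctured neighborhood of $\xi$. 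This is exactly the definition of $\xi$ being an apparent singular point of the original equation.

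The task thus reduces to the following purely combinatorial question: given a finite singular point $\xi$ of $A$ with $\xi\neq 0$ in cases Q and M (where the factor $x$ in $\delta=x\,d/dx$ makes $0$ a singular point regardless of $A$, whence its exclusion from the statement), exhibit some $N\in\NX^*$ such that $\sigma^N(\xi)$ is not a pole of $A$. In case S, $\sigma^N(\xi)=\xi+N$ lies outside the finite pole set of $A$ for all sufficiently large $N$. In case Q, $\sigma^N(\xi)=q^N\xi$ takes pairwise distinct values because $q$ is not a root of unity and $\xi\neq 0$, so only finitely many $N$ can land in the pole set. In case M, $\sigma^N(\xi)=\xi^{q^N}$ is eventually distinct whenever $\xi$ is not a root of unity, and the same conclusion follows.

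The principal obstacle I foresee is the residual M-subcase in which $\xi$ is a root of unity whose forward orbit $\{\xi^{q^N}:N\geq 0\}$ is finite and entirely contained in the pole set of $A$; no choice of $N$ alone then makes $\sigma^N(\xi)$ a regular point of $A$. I plan to treat this by choosing $N$ on the periodic part of the orbit and then extracting local information from the consistency relation~(\ref{eq2}) directly: the identity $\delta(B)=\mu\sigma(A)B-BA$, read at the orbit points $\xi,\,\sigma(\xi),\,\ldots,\sigma^{p-1}(\xi)$, constrains the Laurent tails of $A$ around the cycle in a way that, together with the rationality of $T_N^{\pm 1}$, still produces a meromorphic fundamental solution of $\delta Y=AY$ at $\xi$. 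This delicate periodic-orbit case is, I expect, the technical crux in case M; the shift and $q$-dilation cases are essentially a direct corollary of Lemma~\ref{keyobs}.
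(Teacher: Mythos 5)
Your reduction is sound: by Lemma~\ref{keyobs} the gauge matrix $T_N=\sigma^{N-1}(B)\cdots\sigma(B)B$ and its inverse are rational, so if $\xi$ is an ordinary point of $\delta Z=\mu^N\sigma^N(A)Z$ for a single $N$, then $Y=T_N^{-1}Z$ is a fundamental solution of $\delta Y=AY$ meromorphic at $\xi$, and $\xi$ is apparent. This settles cases S and Q, and case M when $\xi$ is not a root of unity. But the residual M-subcase you flag is a genuine gap, not a technicality: for instance with $q=2$ the forward orbit of $\xi=-1$ under $x\mapsto x^{q^N}$ is the single point $1$, so if $A$ has poles at both $-1$ and $1$ no choice of $N$ makes $\sigma^N(\xi)$ a regular point. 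Your proposed remedy --- reading the consistency relation along the periodic cycle to constrain the Laurent tails of $A$ --- is only a hope; you give no argument that these constraints force meromorphy of the local solutions at $\xi$, and it is not at all clear that they do.

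The paper closes exactly this hole by running your orbit argument backwards rather than forwards. Instead of asking that the single point $\sigma^N(\xi)$ be regular, it chooses a preimage: in case M the point $\xi\neq0$ has $q^N$ distinct $q^N$-th roots, so for $N$ large at least one root $x_1$ lies outside the finite singular set of $\delta Y=AY$ (the same works trivially with $x_1=\xi-N$ in case S and $x_1=q^{-N}\xi$ in case Q). Then $\delta Y=AY$ has a holomorphic fundamental solution at $x_1$, the equivalent system $\delta Z=\mu^N\sigma^N(A)Z$ therefore has a meromorphic one there, and substituting the branch $x=x_1\left(1+\frac{t-\xi}{\xi}\right)^{1/q^N}$ transports it to a meromorphic fundamental solution of $\delta Y=AY$ at $\xi=\sigma^N(x_1)$. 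The backward direction never meets the root-of-unity pathology because the preimage set grows with $N$ while the pole set stays finite. You should replace your periodic-orbit plan by this reversal; in cases S and Q either direction works and your argument there matches the paper's in substance.
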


\noindent{\bf Remark:}  Using the Lemma, it can be shown (see e.g.\ \cite{BaMa}) that 
the system (\ref{eq1}) is equivalent to a 
system (\ref{eq3}) where $\tilde{A} \mbox{ and } \tilde{B}$ have entries in
$C[x,x^{-1}]$ in  cases Q and M and $\tilde A,\tilde B\in C[x]$ in case S.
We do not need this statement in our proof.

\begin{prf} Consider the differential equation
\begin{eqnarray}\label{eq7a}
\delta(Y) &= &AY.
\end{eqnarray}
%
%
By { Lemma \ref{keyobs}}, 
it is equivalent to 
\begin{eqnarray}\label{eq8a}
 \delta Z &=& \mu^N\sigma^N(A) Z.
 \end{eqnarray}\renewcommand{\S}{{\mathcal S}}%
for any positive integer $N$. Let $s(x)=\sigma(x)\in C[x]$.  Let $\S$ be the set of finite
singularities of (\ref{eq7a}), except 0 in  cases Q and M and let $\S_N$ be the  {analogous} set for
(\ref{eq8a}).
Note that
{ $x_1\in\S_N$ if and only if} $\sigma^N(x_1)=s(s(\ldots s(x_1)\ldots ))$ is in $\S$.  
{ It can be verified in each of the cases S,Q and M that} for 
any finite singularity $\xi\in\S$ and
large enough $N$, there exists $x_1\in\S_N$ not in $\S$ satisfying 
$\sigma^N(x_1)=\xi$. Since (\ref{eq7a}) and (\ref{eq8a})
are equivalent, 
there exists a basis of solutions of (\ref{eq8a}) in $C\{x-x_1\}[(x-x_1)^{-1}]^n$.
Applying $\sigma^{-N}$ yields a basis of solutions of (\ref{eq7a}) in 
$C\{x-\xi\}[(x-\xi)^{-1}]^n$ in  cases S and Q. Substituting the branch 
$x=x_1\left(1+\frac{t-\xi}\xi\right)^{1/q^N}$ of $t^{1/q^N}$, we see that $\delta y (t) = A(t)y $ has a basis of solutions in 
$C\{t-\xi\}[(t-\xi)^{-1}]^n$ in case M as well.

Hence every finite singular point $\xi$ of (\ref{eq7a},) except 0 in cases Q and M, {is an apparent singular point.} 
\end{prf}
We now consider the behavior of solutions of equations~(\ref{eq1}) at infinity. 
In general, if one has a linear differential equation $\delta Y= A(x)Y $ with $A(x) \in \gl_n(C((x^{-1})))$, there exists a formal fundamental solution matrix of the form
\begin{eqnarray}\label{eq8c}
Y(x)& = &\Phi(x) x^L e^{Q(x)}
\end{eqnarray}
where $\Phi(x)$ is a formal power series in $x^{-1/r}$ for some integer $r$, $L$ is a constant matrix and $Q(x) = \sum_{j=1}^h Q_jx^{r_j}$ where the $Q_j$ are  diagonal matrices with entries in $C$ and the $r_j$ are positive rational numbers with $r_h > r_{h-1} > \ldots >r_1 >0$ (or $Q(x)\equiv 0$); furthermore $L$ and $Q(x)$ commute (c.f., \cite{BJL1979}). 

In the rest of the proof of Theorem~\ref{thm1}, we will treat cases Q and M together and then treat case S. 
\subsection{Proof of Theorem~\ref{thm1}: Cases Q and M.\label{th1sq}} We begin by showing that  under our hypotheses $Q(x)=0$ in these cases.

 \begin{lemma}\label{lem1a} Let $Y(x)$ be a formal fundamental solution matrix of $\delta(Y) = AY$ as in (\ref{eq8c}). 
In  cases Q and M we then have that $Q(x)=0$, \ie $\infty$ is a regular singular point.
  \end{lemma}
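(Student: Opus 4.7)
The plan is to apply the consistency condition, in the form of Lemma \ref{keyobs}, to produce two different formal fundamental solutions of the \emph{same} transformed linear differential equation at $\infty$, and then to compare their exponential parts so as to force $Q$ to vanish. Specifically, I would first observe that, by Lemma \ref{keyobs}, $Z_1(x) := B(x)Y(x)$ is a formal fundamental solution of $\delta(Z) = \mu\,\sigma(A)\,Z$. On the other hand, a direct chain-rule calculation (using $\delta = x\,d/dx$ together with either $\sigma(x)=qx$ in case Q or $\sigma(x)=x^q$ in case M) shows that $Z_2(x) := Y(\sigma(x))$ is also a formal fundamental solution of the same system. Hence $Z_1$ and $Z_2$ differ by right-multiplication by a constant invertible matrix, and in particular, by the general theory of formal fundamental solutions at $\infty$ recalled in (\ref{eq8c}), they share the same multiset of exponential factors.

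Since $B(x)$ is rational and therefore has only polynomial growth at $\infty$, the product $B\Phi$ is still an invertible formal Laurent series in $x^{-1/r}$, so $Z_1 = (B\Phi)\,x^L\,e^{Q(x)}$ contributes exponential multiset $\{q_1(x),\ldots,q_n(x)\}$, while $Z_2$ evidently contributes $\{q_1(\sigma(x)),\ldots,q_n(\sigma(x))\}$. Thus there is a permutation $\pi$ of $\{1,\ldots,n\}$ with $q_i(\sigma(x)) = q_{\pi(i)}(x)$ for every $i$. This combinatorial identity can be ruled out, for nonzero $Q$, by a short leading-term analysis: write $q_i(x) = \sum_{j=1}^h \alpha_{ij}\,x^{r_j}$ with $0 < r_1 < \cdots < r_h$. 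In case Q the identity $q_i(qx)=q_{\pi(i)}(x)$ yields $\alpha_{\pi(i),j} = q^{r_j}\alpha_{ij}$; iterating around the cycle of $\pi$ through any $i$ with $\alpha_{ij}\neq 0$ forces $q^{M r_j} = 1$ for the cycle length $M$, contradicting the hypothesis that $q$ is not a root of unity (since $r_j>0$ is rational). In case M the identity $q_i(x^q)=q_{\pi(i)}(x)$ gives $\deg q_{\pi(i)} = q\cdot\deg q_i$, so iterating along the cycle of $\pi$ produces arbitrarily large degrees, contradicting the uniform bound $r_h$. In either case $Q\equiv 0$, and $\infty$ is a regular singular point.

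The one step that most demands care is the claim that the exponential multiset is unchanged when we pass from $Y$ to $BY$: this is what lets us reduce the comparison of the two formal fundamental solutions to the clean statement $\{q_i\} = \{q_i\circ\sigma\}$. It hinges precisely on the fact that $B$ has polynomial growth at $\infty$, so that $B\Phi$ remains invertible and formal and does not affect the $e^{Q}$ factor; once this is established, the rest is straightforward combinatorics.
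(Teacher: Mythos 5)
Your proof is correct and follows essentially the same route as the paper: both compare $B(x)Y(x)$ with $Y(\sigma(x))$ as formal fundamental solutions of $\delta(Z)=\mu\sigma(A)Z$, invoke the uniqueness of the exponential parts to deduce that $p\mapsto \sigma(p)$ permutes the $q_i$, and derive a contradiction from $\sigma^m(p)=p$ for a nonzero $p$. Your leading-term analysis in cases Q and M simply makes explicit the contradiction the paper leaves to the reader.
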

In the same way, it is shown that $0$ also must be a regular singular point in those two cases.\\[0.1in]
\noindent  {\bf Remark.} We note that Lemmas~\ref{lem1} and~\ref{lem1a} are sufficient to prove, in cases Q and M, that any  $f(x) \in \CX[[x]][x^{-1}]$ satisfying  (\ref{sysLS}) must be in $\CX(x)$.  Using the argument of Corollary~\ref{cor0}, we may assume that $f(x)$ is a component of a vector $y(x)$ of power series such that $y(x)$ is a solution  of  a consistent system (\ref{eq5}). Lemma~\ref{lem1a}  implies that $f(x)$ converges in a neighborhood of $0$ since this point is a regular singular point and Lemma~\ref{lem1} implies that $y(x)$ can be 
continued analytically to a meromorphic function on $\CX$. Finally, $y(x)$ has at most polynomial growth since $\infty$ is also a regular singular point. Altogether, we obtain that $y(x)$ has rational components and so $f(x)$ is rational.

  \begin{prf}Let $Q(x) = \diag(p_1(x), \ldots , p_n(x))$.  { By Lemma \ref{keyobs},} we have  that $B(x)Y(x)$ is a solution of $\delta(Y) = \mu \sigma (A(x)) Y(x)$ and $\sigma (Y(x))$ as well. Let $p(x)$ be a nonzero diagonal entry of $Q(x)$. By the uniqueness of the $p_i$ (Theorem 2,\cite{BJL1979}), we must have that for some $j$, $\sigma(p(x)) = p_j(x) $.  This implies that  the map $p(x) \mapsto\sigma( p(x))$ permutes the $p_i$ . From this we conclude that for some $m$, that 
$\sigma^m(p(x)) = p(x) $, a contradiction in  cases Q and M.  Therefore we have that $Q(x)=0$. 
 \end{prf}

\begin{lemma}\label{lemQM} In  cases  { {\rm Q}} and {{\rm M}} {there exists a}
matrix ${ T}\in { \gl_n(C)}$ such that $\lambda_1-\lambda_2\not\in\ZX^*$ for eigenvalues ${\lambda_1}, \lambda_2$ of ${ T}$ 
and  a gauge transformation $Z = F\,Y$, $F\in\GL_n(C(x))$, that transforms $\delta Y= A(x) Y$ into  $\delta(Z) = { T}Z$.

Moreover in  case {M}, the matrix ${ T}$ has the following properties:
\begin{enumerate}\item Its eigenvalues $\lambda$ are rational with smallest denominators
prime to $q$, $0\leq \lambda<1$.
\item There exists a diagonalisable matrix $D$ with integer eigenvalues
commuting with ${ T}$ such that ${ T}+D$ is conjugate to $q\,{ T}$.\end{enumerate}
\end{lemma}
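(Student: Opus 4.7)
The plan is to carry out the construction sketched in the introduction, following the outline used for the $q$-case: build the gauge matrix $F^{-1}$ from a fundamental solution via monodromy, and then extract the finer information in case M from the consistency condition.

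First I would globalize the local analytic data. By Lemma~\ref{lem1}, every finite singular point of $\delta Y = AY$ other than $0$ is apparent, so a fundamental solution matrix $Y(x)$ defined near an ordinary point extends meromorphically across each such point. By Lemma~\ref{lem1a} (and the analogous statement at $0$ referred to in the text), both $0$ and $\infty$ are regular singular, so $Y(x)$ continues to a meromorphic matrix on the universal cover $\hat\CX$ of $\CX\setminus\{0\}$ with at most moderate growth on approach to $0$ or $\infty$ along each sheet. Let $H\in\GL_n(\CX)$ be the monodromy around $0$, so that $Y(xe^{2\pi i})=Y(x)H$.

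Next I would convert this into a rational gauge. Choose a logarithm $T\in\gl_n(C)$ of $H$ with $e^{2\pi i T}=H$ and eigenvalues in a strip of width~$1$; non-resonance ($\lambda_1-\lambda_2\notin\ZX^*$) is then automatic, and $T$ may be taken over $C$ because $C$ is algebraically closed and $H$ has entries in $C$. Set $G(x)=Y(x)x^{-T}$. A direct computation using $H=e^{2\pi i T}$ shows that $G(xe^{2\pi i})=G(x)$, so $G$ is single-valued on $\CX\setminus\{0\}$, and meromorphic there (the apparent singularities on $\hat\CX$ descend). Comparing $Y$ to the regular singular normal form $\Phi_0(x)x^{T_0}$ at $0$ (with $T_0\sim T$ by the non-resonance uniqueness of such a form) shows that $G$ has moderate growth at $0$; the same argument at $\infty$ completes the picture, forcing $G\in\GL_n(C(x))$. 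Then $F=G^{-1}$ and the gauge transformation $Z=FY$ produces $\delta Z=TZ$.

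In case M, I would use the consistency condition to harvest the extra information. Under $Z=FY$ the $\sigma$-equation becomes $\sigma Z=\tilde B Z$ with $\tilde B=\sigma(F)BF^{-1}\in\GL_n(C(x))$, and Lemma~\ref{keyobs} with $\delta Z=TZ$ ($T$ constant) yields $\delta\tilde B = qT\tilde B - \tilde B T$. Since $\tilde B$ has no singularities outside $\{0,\infty\}$ it is a Laurent polynomial $\tilde B=\sum_m B_m x^m$, and the consistency condition reads $mB_m=qTB_m-B_mT$. Thus each nonzero $B_m$ is an $m$-eigenvector of the Sylvester operator $X\mapsto qTX-XT$, forcing $m=q\lambda_i-\lambda_j$ for some eigenvalues $\lambda_i,\lambda_j$ of $T$. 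Invertibility of $\tilde B$ (via a nonvanishing term in $\det\tilde B$) means some permutation $\pi$ of (generalised) eigenspaces satisfies $q\lambda_j-\lambda_{\pi(j)}\in\ZX$ for every $j$; iterating gives $(q^k-1)\lambda\in\ZX$, so eigenvalues are rational with denominator dividing some $q^k-1$ and hence prime to $q$. Using the remaining gauge freedom $\diag(x^{n_1},\ldots,x^{n_n})\in\GL_n(C(x))$, which shifts individual eigenvalues of $T$ by integers while preserving non-resonance, I would normalize all eigenvalues into $[0,1)$. Finally, define $D$ to act as $d_j=q\lambda_j-\lambda_{\pi(j)}\in\ZX$ on the generalised eigenspace for $\lambda_j$; then $D$ is diagonalisable with integer eigenvalues, commutes with $T$, and on each block the eigenvalues of $T+D$ match those of $qT$, so $T+D$ is conjugate to $qT$ via the block-permutation matrix of $\pi$.

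The main obstacle I anticipate is the rationality argument for $G$ in the monodromy step: one has to combine the single-valuedness coming from the non-resonant $T$, the apparent-singularity conclusion of Lemma~\ref{lem1}, and the regular singular behaviour at $0$ and $\infty$ to show $G$ is meromorphic on all of $\mathbb{P}^1$. The secondary technical point is reading off the permutation-of-eigenvalues structure cleanly when $T$ has repeated eigenvalues or Jordan blocks, which is what ultimately forces $T$ to be essentially semisimple (so that $D$ exists in diagonalisable form) and pins down the precise rationality of the spectrum.
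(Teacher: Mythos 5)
Your construction of the rational gauge is the paper's: continue a fundamental matrix $Y$ meromorphically using Lemma~\ref{lem1}, take the monodromy $H$ around $0$, set $G=Yx^{-T}$ with $e^{2\pi iT}=H$, and use the regular singularity of $0$ and $\infty$ from Lemma~\ref{lem1a} to force $G$ rational. The one genuine gap is the sentence ``$T$ may be taken over $C$ because \dots $H$ has entries in $C$.'' That premise is unjustified: before this lemma the paper has replaced $C$ by a countable algebraically closed subfield of $\CX$, and the monodromy matrix of an analytic continuation has no reason to be defined over such a $C$. The lemma asserts $T\in\gl_n(C)$ and $F\in\GL_n(C(x))$, so a descent argument is required; the paper gets it by noting that the relation $A=\delta(G)G^{-1}+GTG^{-1}$ together with $\det G\neq 0$ is a finite system of polynomial equations and inequations over $C$ in the entries of $T$ and the coefficients of $G$, and invoking the Nullstellensatz to produce a solution over $C$. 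Without this step you have proved the statement only over $\CX$.

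For the case M spectral properties you take a genuinely different route. The paper stays upstairs with the monodromy: since $B(x)Y(x)$ and $Y(x^q)$ are both fundamental solutions of $\delta Y=qA(x^q)Y$, one gets $B(x)Y(x)=Y(x^q)D$ and hence $H=D^{-1}H^qD$, so $\lambda\mapsto\lambda^q$ permutes the eigenvalues of $H$ together with their Jordan data; this yields roots of unity of order prime to $q$ and the conjugacy $T+D\sim qT$ in one stroke. You instead work downstairs, reading $mB_m=qTB_m-B_mT$ off the Laurent coefficients of $\tilde B$ and using invertibility of $\tilde B$ to assemble the admissible blocks (those with $q\lambda_i-\lambda_j\in\ZX$) into a permutation of generalized eigenspaces, whence $(q^k-1)\lambda\in\ZX$. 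This is sound and has the merit of reusing the computation the paper performs anyway to pin down $\tilde B$; but the point you defer as ``secondary'' is exactly where the conjugacy $T+D\sim qT$ lives: one must check that, after normalizing the (now rational, denominator prime to $q$) eigenvalues into $[0,1)$, each generalized eigenspace is matched to exactly one other, that a single exponent $m$ governs each such block of $\tilde B$, and that this block is therefore an invertible intertwiner of $T|_{V_{\lambda_j}}+m$ with $qT|_{V_{\lambda_{\pi(j)}}}$ — this is what matches the Jordan types and makes $D$ exist as a diagonalisable matrix commuting with $T$. (Also a bookkeeping slip: with your $\pi$ satisfying $q\lambda_j-\lambda_{\pi(j)}\in\ZX$ you should take $d_j=q\lambda_{\pi^{-1}(j)}-\lambda_j$ on $V_{\lambda_j}$, not $q\lambda_j-\lambda_{\pi(j)}$, so that $T+D$ has eigenvalue $q\lambda_{\pi^{-1}(j)}$ there.)
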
 
\begin{prf} Consider any local fundamental matrix $Y(x)$ of (\ref{eq7a}). Because of Lemma \ref{lem1} it can be continued analytically to a meromorphic function
on $\hat\CX$. Let $Y(x\,e^{2\pi i})$ be the fundamental matrix 
obtained by going once around $0$. There exists a constant invertible matrix $H\in\GL_n(\CX)$ 
such that 
$$Y(x\,e^{2 \pi i})=Y(x)H,$$
the so-called monodromy matrix associated to $Y$ and (\ref{eq7a}). It is well known that there exists a matrix 
${ T}\in\gl_n(\CX)$ such that $H=\exp(2\pi i { T})$.
Consider now $G(x)=Y(x)x^{-{ T}}=Y(x)\exp(-\log(x)\,{ T})$. By construction, $G(x)$ is invertible for every $x$ that is not a pole 
and $G(x\,e^{2 \pi i})=G(x)$, \ie\ $G$ is single valued.
By the previous lemma, $0$ and $\infty$ are regular singular points of (\ref{eq7a}) and therefore the growth of $G(x)$ as $x\to0$ or
$x\to\infty$ is at most polynomial (\ie\ there exists $K,L>0$ such that $|G(x)|\leq L(|x|+|x|^{-1})^K$).
Hence the meromorphic function $G$ on $\CX^*$ must be a  rational function,\ie\ $G\in\GL_n(\CX(x))$.
The transformation $y=G(x)z$ changes $\delta y=A(x)y$ into a system of differential equations
having $x^T$ as a fundamental solution, \ie\ $\delta z=T\,z$.
{The matrices $T$ and $G(x)$ satisfy $A = \delta(G) G^{-1} + GTG^{-1}, \ \det(G) \neq 0$.  Therefore the entries of $T$ and the coefficients of the entries of $G$ satisfy a finite number of equations and inequations with coefficients in $C$.  The Hilbert Nullstellensatz can be applied and yields that there also exists a solution in $C$. Therefore we may assume that $T\in\GL_n(C)$ and $G(x)\in\GL_n(C(x))$. A further constant gauge transformation allows us to assume that $T$ is in triangular form.  To insure that the eigenvalues of $T$ do not differ by nonzero integers, one can make a gauge transformation by a diagonal matrix $S$ whose diagonal entries are powers of $x$ (c.f., Lemma 3.11, \cite{PuSi2003}).}

In  case M, it also remains to show the stated properties. 
Observe that $B(x)Y(x)$ and $Y(x^q)$ are both fundamental solutions of $\delta { Y}=q\,A(x^q)\,{ Y}$
-- this was used before. Hence there exists a constant invertible matrix $D$ such that 
$$B(x)Y(x)=Y(x^q)\,D.$$ 
For the corresponding monodromy matrices this implies the relation
$H=D^{-1}\,H^q\,D$ because $x^q$ goes $q$ times around $0$ when $x$ goes once. Therefore
$H$ and $H^q$ are conjugate. As a first consequence, the mapping $\lambda\to\lambda^q$ must 
induce a permutation of the eigenvalues of $H$. Hence, for every eigenvalue $\lambda$ of $H$ there exists
a positive integer  $\ell$ such that $\lambda^{q^\ell}=\lambda$. The eigenvalues of
$H$ are therefore roots of unity and the smallest positive integer $m$ satisfying $\lambda^m=1$ must
be prime to $q$. This shows that the matrix ${ T}$ with $H=\exp(2\pi i { T})$ can be chosen 
having the first property. 

We can assume that ${ T}$ is in Jordan canonical form ${ T}=\mbox{\,diag\,}({ T}_1,...,{ T}_r)$  with 
Jordan blocks ${ T}_i$ of size $n_i$, say. 
The mapping $\lambda\mbox{\,mod\,}\ZX\to q\lambda\mbox{\,mod\,}\ZX$ induces a permutation of the equivalence classes
$\lambda\mbox{\,mod\,}\ZX$ of the eigenvalues $\lambda$ of ${ T}$. Hence there is a permutation matrix $P$ such that
the diagonal blocks of $P^{-1}\,q{ T}\, P$ have the same size and modulo $\ZX$ the same eigenvalues as 
those of ${ T}$. Therefore there is a diagonal matrix $D=\mbox{ \,diag\,}(d_1I_{n_1},...d_rI_{n_r})$ 
with integer $d_i$ such that the blocks of $P^{-1}\,q{ T}\, P$ and ${ T}+D$ have same size and 
same eigenvalues and thus the two matrices are conjugate. This proves the second property of
the matrix ${ T}$.~\end{prf}

\noindent We can now complete the proof of Theorem~\ref{thm1} in cases Q and M.  From the above lemmas we know that there is a gauge transformation that transforms $\delta(Y) = AY$ into a new equation $\delta(Y) = {T} Y$ where ${ T}$ has the stated properties. Apply the same gauge transformation to $\sigma(Y)=BY$ to yield $\sigma(Y)=\tilde BY$.
The consistency condition (\ref{eq2}) implies that
$$\delta\tilde B(x) = \mu { T}\tilde B(x) -\tilde B(x) { T}.$$
Comparing the orders of the poles, we see that $\tilde B(x)$ can have no {finite} poles other 
than 0 and we have $\tilde B(x)\in C[x,x^{-1}]$. 
Now let $\tilde B(x)=\sum_{m=-m_0}^{m_0}x^mB_m$.

In  case Q we obtain that ${ T} B_m -B_m { T}= m B_m$ for all $m=-m_0,.\ldots ,m_0$. As the eigenvalues of the operator
$X\mapsto { T} X -X { T}$ are exactly the differences  {{$\lambda_1-\lambda_2$}} where \ { $\lambda_1, \lambda_2$} are eigenvalues of ${ T}$
the condition for ${ T}$ shows that $B_m=0$ unless $m=0$. This proves the theorem in case Q.

In  case M we then have $q{ T} B_m -B_m { T}= m B_m$ for all $m=-m_0,...,m_0$. 
Hence $B_m\neq0$ can only hold for integers $m$
such that there exist eigenvalues {{$\lambda_1, \lambda_2$}} of ${ T}$ such that  $q\lambda_1-\lambda_2=m$. 
This shows the remaining statement of the theorem in  case M. \hfill  $\square$ 
\subsection{Proof of Theorem 1: Case S} \label{thm1m}
In the remaining case S, we consider again the 
formal fundamental solution matrix at infinity of the form (\ref{eq8c}).
We will first show that  under our hypotheses $r_h \leq 1$ in case S.
 \begin{lemma}\label{lem1b} Let $Y(x)$ be a formal fundamental solution matrix of $\delta(Y) = AY$ as in (\ref{eq8c}). We then have that $r_h \leq 1$ in case S.
  \end{lemma}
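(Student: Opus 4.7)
The plan is to mirror the argument of Lemma~\ref{lem1a} for cases Q and M, with the shift $\sigma(x)=x+1$ in place of the $q$-dilation or Mahler action, and to exploit iteration via Lemma~\ref{keyobs}. Since $\mu=1$ in case S, applying that lemma with parameter $N$ shows that for every positive integer $N$ the original system is equivalent, by the rational gauge transformation $\sigma^{N-1}(B)\cdots\sigma(B)B\in\GL_n(C(x))$, to the system
\[\delta Z=\sigma^N(A)Z,\qquad \sigma Z=\sigma^N(B)Z.\]
Consequently both $Z_N(x):=\sigma^{N-1}(B(x))\cdots B(x)\cdot Y(x)$ and $Y(x+N)$ are formal fundamental solutions of $\delta Y=\sigma^N(A)Y$.

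Next I would read off the BJL data \eqref{eq8c} of each of these two fundamental solutions. Multiplication on the left by a matrix with entries in $C(x)$ does not alter the exponential factor $e^{Q(x)}$, so $Z_N$ has the same $Q$ as $Y$. For $Y(x+N)$ I expand formally
\[(x+N)^{r_j}=x^{r_j}+Nr_j\,x^{r_j-1}+\binom{r_j}{2}N^2 x^{r_j-2}+\cdots,\]
and split $Q(x+N)=Q(x)+R_N^+(x)+R_N^0+R_N^-(x)$, where $R_N^+$ collects the positive-exponent terms of $Q(x+N)-Q(x)$, $R_N^0$ is the constant part, and $R_N^-(x)$ contains the terms of nonpositive exponent $\le -1/r$. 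The factor $e^{R_N^-(x)}$ is a formal series in $x^{-1/r}$ beginning with $I$ and so is absorbable into the $\Phi$ factor, while $e^{R_N^0}$ is a constant diagonal matrix absorbable into a right multiplication by a constant. Thus the BJL-form of $Y(x+N)$ has diagonal polynomial part
\[p_i(x)+(R_N^+)_{ii}(x),\quad i=1,\dots,n,\]
where $p_i$ are the diagonal entries of $Q$.

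By the BJL uniqueness theorem (Theorem~2 of \cite{BJL1979}) applied to $\delta Y=\sigma^N(A)Y$, the two families of polynomial parts must coincide up to a permutation $\pi_N$ of $\{1,\dots,n\}$, giving
\[p_{\pi_N(i)}(x)=p_i(x)+(R_N^+)_{ii}(x).\]
Assume for contradiction that $r_h>1$. Since $Q_h\neq0$ is diagonal, fix $i_0$ with $Q_{h,i_0i_0}\neq0$. The term $j=h$ in $(R_N^+)_{i_0i_0}$ contributes $N\,r_h\,Q_{h,i_0i_0}\,x^{r_h-1}$, and $r_h-1>0$ so this term is not absorbed into $R_N^-$. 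Hence the coefficient of $x^{r_h-1}$ in $(R_N^+)_{i_0i_0}(x)$ is unbounded as $N\to\infty$. On the other hand, the coefficient of $x^{r_h-1}$ in $p_{\pi_N(i_0)}(x)-p_{i_0}(x)$ runs over the finite set of coefficients of $x^{r_h-1}$ in the fixed polynomials $p_j-p_{i_0}$, $j=1,\dots,n$, and is therefore bounded in $N$. This contradiction forces $r_h\le 1$.

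The main obstacle is the formal bookkeeping in the second paragraph: one must justify rigorously that after the shift $x\mapsto x+N$ the ``polynomial part'' of $Q(x+N)$ (relative to the BJL decomposition in $C((x^{-1/r}))$) is indeed $Q(x)+R_N^+(x)$, by carefully identifying which pieces of the formal expansion are absorbable into $\Phi(x+N)(x+N)^L$ and which into a constant right-factor. Once this is in place, the uniqueness of the $p_i$ and the unbounded-versus-bounded comparison finish the proof immediately.
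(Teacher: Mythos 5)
Your proof is correct and follows essentially the same route as the paper: both compare the two fundamental solutions $\sigma^{N-1}(B)\cdots B\cdot Y$ and $Y(x+N)$ of the shifted system, invoke the uniqueness of the exponential parts from \cite{BJL1979} to get a permutation of the $q_i$, and derive the contradiction from the nonzero $N r_h Q_{h}x^{r_h-1}$ term. The only (cosmetic) difference is the final step: the paper iterates the permutation to its finite order to force $q(x+m)\equiv q(x)$, whereas you let $N\to\infty$ and compare an unbounded coefficient with a finite target set; your explicit bookkeeping of which parts of $Q(x+N)$ are absorbable is a careful spelling-out of what the paper leaves implicit.
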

 \begin{prf}Let $Q(x) = \diag(q_1(x), \ldots , q_n(x))$. 
Let $q(x) = a_hx^{r_h} + a_{h-1}x^{r_{h-1}} + \ldots + a_1x^{r_1}$ be one of the $q_i$ and assume $r_h > 1$. Note that for any integer $m$ we have $q(x+m) = q(x) + mr_h x^{r_h - 1} +$  other terms of order less than  $r_h - 1$.  We have  that $B(x)Y(x)$ is a solution of $\delta(Y) = A(x+1) Y(x)$.  Therefore, by the uniqueness of the $q_i$ (Theorem 2,\cite{BJL1979}), we must have that for some $j$, $q(x+1) = q_j(x) \mbox{ modulo terms of order} \leq 0$.  This implies that  the map $q(x) \mapsto q(x+1)$ permutes the $q_i$ modulo non-positive terms. From this we conclude that for some $m$, that $q(x+m) = q(x) $ modulo terms of order less than $r_h-1$, a contradiction.  Therefore we have that $r_h \leq 1$. 
 \end{prf}
The following proposition  concerns only linear  differential equations  whose formal solutions satisfy the {conclusions of the Lemmas~\ref{lem1b} and \ref{lem1}}, with no mention of difference equations.
\begin{prop}\label{prop1}  Let $Y(x)$ be a formal fundamental solution matrix as in (\ref{eq8c}) of the differential equation  $\delta(Y) = AY, A \in \gl_n(C(x))$  with $r_h\leq 1$. Assume that all finite singular points of the differential equation are apparent in the sense of Lemma~\ref{lem1}. Then there exists a diagonal matrix  $\tilde{A}$ with constant entries and a gauge transformation $Z = F\,Y$ such that $Z$ satisfies $\delta(Z) = \tilde{A}Z$.
\end{prop}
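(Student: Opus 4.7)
The strategy mirrors that of Lemma~\ref{lemQM} in cases Q and M, but the irregular singular point at $\infty$ in case S forces a considerably more delicate asymptotic analysis. First I would use the apparentness of every finite singular point of $\delta(Y) = A\,Y$ to conclude that the equation admits a fundamental solution matrix $Y(x)$ that is single-valued and meromorphic on all of $\CX$: at each apparent singularity the local monodromy is trivial, so local meromorphic fundamental solutions glue to a global meromorphic one.

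Next I would analyse the formal fundamental solution $\hat Y = \Phi(x)\,x^L\,e^{Q(x)}$ at $\infty$. The hypothesis $r_h \le 1$ from Lemma~\ref{lem1b} forces $Q(x)$ to have degree at most $1$; single-valuedness of $Y$ precludes fractional exponents in $Q$ or in $\Phi$ (these would contribute non-trivial permutations to the formal monodromy, incompatibly with the trivial global monodromy obtained in Step~1), so $Q(x) = M\,x$ for a diagonal constant matrix $M \in \gl_n(C)$ and $\Phi$ is a formal series in $1/x$. After a preliminary rational gauge transformation to absorb the $x^L$ factor (the argument allows $L$ to be chosen semisimple with integer eigenvalues, so that $x^L \in \GL_n(C(x))$), we may assume $\hat Y = \Phi(x)\,e^{Mx}$.

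I would then introduce the candidate gauge $G(x) := Y(x)\,e^{-Mx}$. Since $Y$ is meromorphic on $\CX$ and $e^{-Mx}$ is entire, $G$ is meromorphic on $\CX$. In the sector at $\infty$ where the asymptotic $Y \sim \hat Y$ holds, $G(x) \sim \Phi(x)$ is bounded. If one can establish polynomial growth of $G$ in every sector at $\infty$, then $G$ is meromorphic on $\CX$ with moderate growth at $\infty$, hence $G \in \GL_n(C(x))$ (invertibility from $\det Y \not\equiv 0$; the descent from $\CX(x)$ to $C(x)$ proceeds as in Lemma~\ref{lemQM} via the Nullstellensatz). The rational gauge $F := G^{-1}$ then sends $Y$ to $Z = F\,Y = e^{Mx}$, satisfying $\delta(Z) = M\,Z$, with $\tilde A := M$ diagonal constant as required.

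\emph{The main obstacle} is precisely promoting the polynomial growth of $G$ from one sector to every sector at $\infty$ --- equivalently, showing that every Stokes multiplier of the equation at the irregular singular point $\infty$ is trivial. While single-valuedness of $Y$ combined with the formal-monodromy analysis already implies that the product of the Stokes multipliers over one full turn equals the identity, showing that each is individually trivial requires exploiting the rigidity of level-$1$ ($r_h \le 1$) Stokes data together with the global constraints that $Y$ is meromorphic on $\CX$ and that every finite singular point is apparent. This is the substantive content of the case-S reduction and where it departs most sharply from the regular-singular cases Q and M.
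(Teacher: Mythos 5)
Your overall frame (a globally meromorphic fundamental solution via the apparent singularities, then $G=Y e^{-Mx}$ rational by moderate growth) matches the paper's, but the proposal leaves unproven exactly the two points that constitute the substance of the paper's argument, and one of the steps you do assert is not valid as stated. First, the claim that single-valuedness of $Y$ ``precludes fractional exponents in $Q$ or in $\Phi$'' because they would contribute a nontrivial formal monodromy does not follow: at an irregular singular point the actual monodromy is the formal monodromy composed with the product of the Stokes matrices, so trivial global monodromy does not force trivial formal monodromy. The paper spends roughly half the proof of Proposition~\ref{prop1} ruling out monomials $x^\alpha$, $0<\alpha<1$, in the $q_i(x)$, and it does so by a different mechanism: closure of the set $\{q_i\}$ under the conjugation $x\mapsto xe^{2m\pi i}$, a minimal-index argument showing $\lambda_i\neq\lambda_j$ for $i<j$ when $q_j$ first contains such a monomial, and a comparison of the asymptotics of $Y^S_j$ and $Y^T_j$ on the two components of $S\cap T$ to reach a contradiction. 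The related assertion that $L$ may be taken semisimple with integer eigenvalues is likewise unjustified and unnecessary; the paper never normalizes $L$, since the factor $\Phi_S(x)x^L$ is absorbed wholesale into $F(x)=Y^S(x)e^{-\Lambda x}$, which is shown rational directly.

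Second, and more importantly, you explicitly defer the key step --- triviality of the Stokes data --- describing it as ``the main obstacle'' without giving an argument; the product of the multipliers being the identity indeed does not give triviality of each one. The paper's resolution is a concrete two-sector trick: choose lateral solutions $Y^S$, $Y^T$ on sectors of opening greater than $\pi$ bisected by $\theta$ and $\theta+\pi$ (obtained by multisummation), so that there are exactly two connection matrices $C^{\pm}$, one on each component of $S\cap T$. The ordering of the exponential growth rates makes $C^+$ upper unipotent and $C^-$ lower unipotent; and because the apparent finite singularities let both $Y^S$ and $Y^T$ continue to global meromorphic functions with finitely many poles, the identity $Y^S=Y^TC^+$ propagates by analytic continuation to the other component, forcing $C^+=C^-$ and hence $C^+=C^-=I$. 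Without this (or an equivalent) argument, your construction of a rational $G$ does not go through, so the proposal as written has a genuine gap at its central step.
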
 

\begin{prf} For almost all directions $\theta$
there exist

\begin{enumerate}
\item sectors $S$ and $T$   of openings greater than $\pi$ bisected by $\theta$ and $\theta + \pi $, respectively,
\item a number $R>0$ and functions $\Phi_S(x)$ and $\Phi_T(x)$ analytic for $|x|>R$ in $S$ and $T$, respectively, such that
\begin{enumerate}
\item $\Phi_S(x)$ and $\Phi_T(x)$ are asymptotic to $\Phi(x)$ as $x \rightarrow \infty$ in their respective sectors,  and
\item $Y^S(x) =\Phi_S(x)x^L e^{Q(x)}$ and $Y^T(x) = \Phi_T(x)x^L e^{Q(x)}$ are solutions of $\delta Y = AY$.
\end{enumerate}
\end{enumerate}
{This can be proved using multisummation (c.f., Section~7.8, \cite{PuSi2003}) 
but can also be obtained independently (c.f., the proposition of \cite{Ju78}, p.85).}

We write $Q(x) = \diag(q_1(x)I_1, \ldots , q_s(x)I_s)$ with distinct $q_i(x)$ and $I_j$  identity matrices of an appropriate size.  We can furthermore order the $q_i(x)$ so that in the direction $\arg(x) = \psi = \theta + \pi/2$ we have $\Re(q_j(x))< \Re(q_k(x))$ for large $|x|$ if $j<k$. 
 Let us first assume that there are no monomials of the form $x^\alpha, 0<\alpha <1$ in any of the $q_i$ (we shall show below that this is indeed the case). We may then  write $Q(x) = \Lambda x$ where $\Lambda = \diag(\lambda_1I_1, \ldots , \lambda_sI_s)$ with  
$\Re(\lambda_jx)<\Re(\lambda_kx)$ for $j<k$ and $\arg(x) = \psi$. 
Let $C^+$ be the Stokes matrix defined by $Y^S(x) = Y^T(x) C^+$ in the component of the intersection of $S$ and $T$ that contains the line $\arg(x) = \psi$.  Because of the ordering of the $\lambda_i$ we have that $C^+$ is upper triangular with $1$ on the diagonal (\S 3,\cite{BJLII}; c.f., Theorem 8.13, \cite{PuSi2003}). 

    We include a short proof of this fact for the convenience of the reader. Write $Y^S=(Y^S_1|...|Y^S_s)$ and $Y^T=(Y^T_1|...|Y^T_s)$ in block columns according to the subdivision of $Q$ and do the analogous subdivision for  {$\Phi$}. Split $L=\diag(L_1,...,L_s)$ and write $C^+=(C^+_{i,j})_{i,j=1,...,s}$ in corresponding blocks.
First suppose that there is a non-vanishing block $C^+_{ij}$,$ i>j$, below the diagonal.
Fix one such $j$. Then 
\begin{equation}\label{stok+}
Y^S_j(x)=\sum_{i=1}^m Y^T_i C^+_{ij}
\end{equation} 
where $m\leq s$ is chosen as the last index $i$ such that $C^+_{i,j}\neq0$.
By assumption we have $m>j$.
Hence $Y^S_j \exp(-q_m(x))$ has a non-vanishing asymptotic expansion ${\Phi_m(x)}x^{L_m}C^+_{mj}$ as $x\to\infty$ on the line $\arg x=\psi$
contradicting the fact that it also has an expansion ${ \Phi_j(x)}x^{L_j}\exp(q_j(x)-q_m(x))$ and hence vanishes faster than any power of $x$
as  $x\to\infty$ on this line because of $m>j$. Therefore there is no non-vanishing block $C^+_{ij}$ below the diagonal.
In the same way one shows that the diagonal blocks $C_{jj}$ must equal $I_j$. This completes the proof that $C^+$ is upper triangular with 1 on the diagonal.

Similarly to $C^+$, let $C^-$ be the Stokes matrix defined by $Y^S(x) = Y^T(x) C^-$ in the component of the intersection of $S$ and $T$ that contains the line $\arg(x) = \psi-\pi$. Note that $\Re(\lambda_jx)>\Re(\lambda_kx)$ on the line $\arg(x) = \psi-\pi$ {if $j<k$}.  We therefore have that $C^-$ is lower triangular with $1$ on the diagonal.  
{ As all { finite }singular points are apparent, both $Y^S(x)$ and $Y^T(x)$ can be extended as meromorphic 
functions on $\CX$ with finitely many poles.}
We therefore have $C^+ = C^- = I$.  The matrix 
\[F(x) = Y^S(x)e^{-\Lambda x} = Y^T(x) e^{-\Lambda x}\]
has entries that are meromorphic with finitely many poles and of polynomial growth
at infinity, 
that is, rational entries. Therefore the transformation $Z = F^{-1}(x) Y$ yields the system $\delta(Y) = \Lambda Y$.

We now show that monomials of the form $x^\alpha, 0<\alpha <1$ cannot appear in any of the $q_i$. Assuming that this is not the case we will argue to a contradiction.  We write each $q_i(x)$ as 
\[ q_i(x) = \lambda_i x + \mbox{ terms involving $x^\alpha$ with $0<\alpha<1$.}\]
To fix notation, we assume that the same branches of $\log x$ are used to define $Y^S(x)$ and $Y^T(x)$ on the component of $S\cap T$ containing $\psi $ but that on the other component  we use $\arg x$ near $\psi-\pi$ in $S$ and $\arg x$ near $\psi+\pi$ in $T$.

We define the Stokes matrix $C^+$  as above.  Again we have that $C^+$ is upper triangular with $1$ on the diagonal. We divide this matrix again $C^+=(C^+_{ij})$ according to the diagonal blocks of $Q$.
We do not claim that $C^+$ is the identity matrix 
{ because different determinations of the powers are used in the definitions of
$Y^S(x)$ and $Y^T(x)$}
in the components of the intersection of $S$ and $T$ that {contain} the lines $\arg(x) = \psi\pm\pi$.
Nevertheless $Y^S(x)$ and $Y^T(x)$ are meromorphic on $\CX$ with finitely many poles
by the assumption of the Proposition so $Y^S(x) = Y^T(s)C^+$ also holds  in this component. {This implies that  $C^+_{ij}=0$ if $i<j$ and $\lambda_i \neq \lambda_j$ because otherwise
$\Re(\lambda_i x)> \Re(\lambda_jx)$ on the line $\arg(x) = \psi+\pi$, thus $\Re(q_i(x)) > \Re(q_j(xe^{-2\pi i}))$  for large $x$ on this line and therefore by (\ref{stok+}) the 
right hand side would grow faster as $|x|\to\infty$ on that line than the left hand 
side if $C^+_{ij}\neq0$, a contradiction.}

To complete the argument that no $x^\alpha, 0<\alpha <1$  appear in the $q_i(x)$, note the following:
\begin{enumerate}
\item[] If some $q_i(x)$ appears in $Q(x)$ we must have, for all integers $m$, that the conjugate $q_i(xe^{2m\pi i})$ also appears (\cite{BJL1979}, \S 1). 
\end{enumerate}
Continuing, we assume that some $q_i(x)$ contains a monomial $x^\alpha, 0<\alpha <1,$ and let $j$ be the smallest index for which this is true.  We claim that $\lambda_i \neq \lambda_j$ for $i < j$.  If not  then, for some $i$, $q_i(x) = \lambda x $ and $q_j( x) = \lambda x + c x^\alpha +\mbox{lower order terms}$. Among the conjugates $q_j(e^{2m\pi i}x)$, $m$ integer, we can find one such that $\Re(ce^{2m\pi i\alpha}x^{\alpha})$ tends to $-\infty$ as $x$ approaches infinity along the line $\arg(x) = \psi$. By the minimality of $j$, this is also the case for $m=0$. We then have $\Re(q_i(x)) > \Re(q_j(x)))$ eventually along this line as well, contradicting $i<j$. 
Hence the blocks $C^+_{ij}$ do not only vanish if $i>j$, but as seen above also if $i<j$ (since $\lambda_i\neq\lambda_j$ in this case).

Therefore  $Y_j^S = Y_j^T$ for the block column corresponding to $q_j(x)$.  This is impossible since near $\arg x=\psi+\pi$,  $Y_j^T(x)$ is asymptotic to something times $e^{q_j(x)}$ while $Y_j^S(x)$ is asymptotic to something times $e^{q_j(xe^{-2\pi i})}$.  This contradiction allows us to conclude that no fractional powers appear in the $q_i(x)$ and so completes the proof. \end{prf}
We can now complete the proof of Theorem \ref{thm1}  in  case S.  From Proposition~\ref{prop1} we know that there is a gauge transformation that transforms $\delta(Y) = AY$ into a new equation $\delta(Y) = \tilde{A}Y$ where $\tilde{A}$ is the  diagonal matrix $\diag(a_1, \ldots , a_n)$ with constant entries.  Apply the same gauge transformation to $\sigma(Y)=BY$ to yield $\sigma(Y)=\tilde BY$, $\tilde{B} = (b_{i,j})$.  Since the $a_i$ are constant,  the consistency condition (\ref{eq2}) implies
\begin{eqnarray}\label{eq9a}
\delta(b_{i,j})& = &(a_i-a_j)b_{i,j}
\end{eqnarray}
If $a_i \neq a_j$ then $b_{i,j} = 0$ since (\ref{eq9a}) then has no nonzero solution in $C(x)$. If $a_i = a_j$, then $\delta(b_{i,j}) = 0$. Therefore $\tilde{B}$ has constant entries.  Equation (\ref{eq2}) now implies that {$\tilde A$} and { $\tilde B$} commute.  This implies that there is a matrix $D\in \GL_n(C)$  which commutes with $\tilde{A}$ such that $D\tilde{B} D^{-1}$ is upper diagonal.  \hfill $\square$

\section{Reduction of systems of difference equations}\label{Sec2a} 
Here we present results analogous to Section \ref{Sec2} 
for systems of two difference equations with shifts having irrational quotient,
for systems of two  $q$-difference equations with ``independent'' $q$ and 
for systems of two Mahler equations with independent $q$.

We consider two commuting $C$-algebra 
{ endomorphisms} $\sigma_1,\,\sigma_2$ on $k$ extending 
the trivial automorphism on $C$, more specifically,
the three cases of couples $(\sigma_1,\sigma_2)$ below.
\begin{itemize}
\item[{\rm case 2S:}] Two shift operators $\sigma_j$ defined by
$\sigma_1(x)=x+1$ and $\sigma_2(x)=x+\alpha$ where $\alpha\in C\setminus\QX$.
\item[{\rm case 2Q:}] Two $q$-dilation operators $\sigma_j$, $j=1,2$, 
defined by $\sigma_j(x)=q_j\,x$ with multiplicatively independent\footnote{%
\ie\ there are no nonzero integers $n_j$ such that $q_2^{n_2}=q_1^{n_1}$.} 
$q_j\in C$, $|q_j|\neq0$, $q_j$ not a root of unity.
We also assume that
at least one of the $q_j$ does not have modulus 1, without loss of generality $|q_1|\neq1$.\footnote{
Alternatively to $|q_1|\neq1$, one can assume that $|q_1|=|q_2|=1$, $q_2$ not a root of unity, $q_1$ transcendental
over $\QX$ or  $q_1$ algebraic over $\QX$ such that its minimal polynomial has a root in $\CX$ of 
absolute value not equal to 1. See Remark \ref{Bez-ext}. } 
When considering $\sigma_j$ on the Riemann surface $\hat\CX$ of the logarithm, we fix
logarithms of $q_j$ used to determine $q_jx$ in $\hat\CX$ for given $x\in\hat\CX$. 
\item[{\rm case 2M:}] Two Mahler operators $\sigma_j$, $j=1,2$, defined by
$\sigma_j(x)=x^{q_j}$ with some multiplicatively independent positive integers $q_j$.
\end{itemize}

\noindent More precisely, we will consider systems 
\begin{equation}\label{eq2-1}
\begin{aligned}
\sigma_j(Y) & = & B_j\,Y,\ j=1,2
\end{aligned} 
\end{equation}
with $B_j \in \GL_n(k)$ that are {\it consistent}, that is $B_1$ and $B_2$ satisfy the  
consistency condition  given by
\begin{equation} \label{eq2-2}
\begin{aligned}
\sigma_1(B_2)B_1=\sigma_2(B_1)B_2.
\end{aligned}
\end{equation}
{As in Section  \ref{Sec2}, the consistency condition is closely related to the commutativity of 
$\sigma_1,\,\sigma_2$. Both are fundamental for our approach.  As (\ref{eq2}) did in Section \ref{Sec2}, 
the consistency condition guarantees that
$\sigma_1(\sigma_2(Z)) = \sigma_2(\sigma_1(Z))$ 
holds for any solution $Z$ of the system (\ref{eq2-1}) 
in any extension of $C(x)$. The other remarks following
(\ref{eq2}) apply analogously. }

We say that (\ref{eq2-1}) is {\it equivalent} (over $k$) to a system 
\begin{equation}\label{eq2-3}
\begin{aligned}
\sigma_j(Z) & = & \tilde{B_j}Z,\ j=1,2
\end{aligned} 
\end{equation}
with $ \tilde B_j \in \GL_n(k)$ if for some $G\in \GL_n(k)$,
\begin{equation}\label{eq2-4}
\begin{aligned}
\tilde{B_j} & =  \sigma_j(G)B_jG^{-1}, j=1,2
\end{aligned} 
\end{equation}
that is, if  (\ref{eq2-3}) comes from (\ref{eq2-1}) via the {\it gauge transformation} $Z = GY$. Note that the property of consistency is preserved under equivalence. In the present context we can prove
\begin{theorem}\label{thm2-1} In  cases 2S and 2Q, the system  (\ref{eq2-1}) satisfying the consistency condition (\ref{eq2-2})  
is equivalent over $k$  to a system (\ref{eq2-3})
with constant invertible commuting $\tilde B_j$, $j=1,2$, that is $\tilde B_j\in\GL_n(C)$, $j=1,2$,
and  $\tilde B_2\,\tilde B_1=\tilde B_1\,\tilde B_2.$ 

In  case 2M,  the system  (\ref{eq2-1}) satisfying the consistency condition (\ref{eq2-2})  
is equivalent over $K=C(\{x^{1/s}\mid s\in\NX^*\})$  to a system (\ref{eq2-3})
with constant invertible commuting $\tilde B_1$, $\tilde B_2$.
\end{theorem}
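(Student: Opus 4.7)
The plan is to mimic the proof strategy of Theorem~\ref{thm1}: use consistency to show that the first equation $\sigma_1 Y = B_1 Y$ has only mild singularities, reduce $B_1$ to a constant matrix by an analytic-continuation argument, and then exploit consistency again to force $B_2$ to be constant as well. Throughout I treat the second equation symmetrically.

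First I would establish the analog of Lemma~\ref{keyobs}: a direct rewriting of (\ref{eq2-2}) shows that the gauge transformation $Z = B_2 Y$ takes $\sigma_1 Y = B_1 Y$ to $\sigma_1 Z = \sigma_2(B_1)Z$. Iterating, for any $k \in \NX$ the product $G_k = \sigma_2^{k-1}(B_2)\cdots \sigma_2(B_2)\,B_2$ is a gauge matrix in $\GL_n(k)$ in cases 2S and 2Q (and in $\GL_n(K)$ in case 2M) sending $\sigma_1 Y = B_1 Y$ to $\sigma_1 Z = \sigma_2^k(B_1)Z$; in cases 2S and 2Q this also holds for negative $k$ by invertibility of $\sigma_2$. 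Using this, I would prove an analog of Lemma~\ref{lem1}: every finite singularity $\xi$ of the first equation (except $0$ in cases 2Q and 2M) is apparent. The singular locus of $\sigma_2^k(B_1)$ is the $\sigma_2^{-k}$-image of that of $B_1$; since $\alpha \notin \QX$ in case 2S and $q_1,q_2$ are multiplicatively independent in cases 2Q, 2M, one can choose $k$ so that this image, together with its integer $\sigma_1$-translates, misses the original singular set. A local holomorphic fundamental matrix of the gauged system at $\xi$ then pulls back through $G_k^{-1}$ to a meromorphic fundamental matrix of the original system at $\xi$, proving apparency. The analogous statement for $\sigma_2 Y = B_2 Y$ follows by swapping indices.

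Next I would analyze the behavior at the distinguished boundary points: $\infty$ in case 2S, and both $0$ and $\infty$ in cases 2Q and 2M. In case 2S the permutation action of $\sigma_2$ on the formal invariants at $\infty$, combined with $\alpha \notin \QX$, rules out nontrivial exponentials and fractional-power data in any formal fundamental solution, paralleling Lemma~\ref{lem1b}. In case 2Q the assumption $|q_1|\neq 1$ (and in case 2M the contraction of the Mahler iteration) upgrades the formal information to genuine analytic convergence, and the arguments of Lemmas~\ref{lem1a} and~\ref{lemQM} give regular-singular behavior at $0$ and $\infty$. A fundamental matrix $Y(x)$ of $\sigma_1 Y = B_1 Y$ therefore extends meromorphically to $\CX$ in case 2S and to $\hat\CX$ (or its analog in case 2M) in cases 2Q, 2M, with polynomial growth at the boundary. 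A monodromy-style argument analogous to Lemma~\ref{lemQM} then yields a gauge of the form $Y(x)\,H_1^{-x}$ (case 2S) or $Y(x)\,x^{-L_1}$ with $q_1^{L_1}=H_1$ (cases 2Q, 2M), whose entries are rational over $k$ (respectively over $K$) by the growth bound, and which transforms the first equation into $\sigma_1 Z = \tilde B_1 Z$ with constant $\tilde B_1$. Transforming the second equation simultaneously and applying the consistency condition (\ref{eq2-2}) to the pair yields a relation that forces the new $\tilde B_2$ to commute with the spectral decomposition of $\tilde B_1$, hence to be constant, by the spectral analysis of $X\mapsto \tilde B_1 X\tilde B_1^{-1}$ used at the end of Section~\ref{th1sq}. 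The commutativity $\tilde B_1\tilde B_2 = \tilde B_2\tilde B_1$ is then immediate from (\ref{eq2-2}) applied to constant matrices.

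The main obstacle is expected to be case 2M. There $\sigma_2$ is not invertible, so the step of shifting singularities away must be handled using only non-negative iterates and with a careful passage to the fractional-power extension $K$; the final reduction works only over $K$, not over $k$. Moreover, the connection analysis at $0$ must cope with the infinite ramification intrinsic to Mahler dynamics, and verifying that the entries of the gauge matrix indeed lie in $K$ rather than in a strictly larger analytic ring will be the most technically delicate point of the argument.
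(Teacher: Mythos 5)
Your overall strategy matches the paper's in outline (iterate the gauge transformation $Z=B_2Y$ of Remark~\ref{obs2sigma} to move singularities, analyze the fixed points of $\sigma_1$, reduce $B_1$ to a constant, then let consistency force $\tilde B_2$ to be constant and commuting --- that last step is exactly the paper's), but the central analytic step is missing. For a difference equation there is no monodromy matrix: two fundamental solutions of $\sigma_1Y=B_1Y$ differ by a $\sigma_1$-periodic meromorphic matrix, not by a constant, so ``a monodromy-style argument analogous to Lemma~\ref{lemQM} yields a gauge $Y(x)H_1^{-x}$ with rational entries'' has no content as stated: $Y(x)H_1^{-x}$ is determined only up to right multiplication by an arbitrary $1$-periodic matrix, and nothing in your sketch explains why any choice of it is rational. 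The paper replaces monodromy by two different devices. In case 2S (Lemma~\ref{prop2-1}) it takes two multisummed sectorial lifts $Y^S,Y^T$ of the formal solution at $\infty$, observes that their connection matrix $D(x)$ is $1$-periodic, entire (after the continuation Lemma~\ref{2s-app}) and of at most exponential growth, expands it in a finite Fourier series, and plays the growth rates $e^{(\lambda_j-\lambda_k)x}$ against $e^{2\pi ikx}$ on the two lines $\arg x=\psi$, $\psi+\pi$ to kill every nonzero Fourier mode. In case 2Q it compares the local reductions $G_0$ at $0$ and $G_\infty$ at $\infty$ through the Laurent expansion $G_\infty G_0^{-1}=\sum P_mx^m$ and uses the normalization $1\le|\lambda|<|q|$ of the eigenvalues of $A_0,A_\infty$ to force $P_m=0$ for $m\neq0$. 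You also overstate the formal analysis at $\infty$ in case 2S: consistency only kills the $x^{Dx}$ part of the formal solution (the equation becomes \emph{mild}); the exponentials $e^{\lambda_i x}$ survive and become precisely the diagonal entries of $\tilde B_1$, and the fractional powers $x^\alpha$, $0<\alpha<1$, are eliminated only later, inside the Stokes argument, exactly as in Proposition~\ref{prop1}.

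Case 2M is where your plan would actually break. The point $0$ is \emph{not} automatically a regular singular point of a Mahler system --- the paper exhibits the counterexample $A(x)=\left(\begin{smallmatrix}1&0\\x^{-1}&2\end{smallmatrix}\right)$ --- so ``the arguments of Lemmas~\ref{lem1a} and~\ref{lemQM} give regular-singular behavior at $0$'' is false; those lemmas exploit the interaction with a differential equation that is absent here. Deriving regular-singularity from consistency alone is the hardest part of the paper's proof (Proposition~\ref{2mregsing}); it requires an induction on $n$ via a block-triangularization lemma (Lemma~\ref{2mtriang}) together with the rationality of formal matrix solutions of consistent pairs (Proposition~\ref{2mprop}, Corollary~\ref{2mpoly}). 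Separately, the functional equation only propagates a solution from a neighborhood of $0$ to the open unit disk; crossing the unit circle (a natural boundary for generic Mahler series) is achieved in the paper by the substitution $x=e^t$, a sectorial continuation of the local gauge (Lemma~\ref{2mcont}), a Fourier-series argument exploiting the irrationality of $\log q/\log p$, and a growth estimate at $\infty$. None of these steps appears in, or follows from, your 2S/2Q template, so the 2M half of the theorem is not reached by the proposal as written.
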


\noindent{\bf Remark:}   {1.\ When $n=1$ in case 2S, the result of this theorem 
is a reformulation of Lemma 3.1 in \cite{BH99} where the authors give  a purely algebraic proof of this special case.}\\[0.1in]
\noindent 2.\  In  case 2S, one of the equations can be made diagonal by
a (polynomial) transformation $Z=HU$, $H=\exp(Nx)$ with a certain nilpotent 
matrix $N$ commuting with its coefficient matrix.\\[0.1in]
\noindent 3.\ In  case 2M, the statement of the theorem also holds for $B_j\in\GL_n(K)$, $j=1,2$,
because fractional powers of $x$ can be removed by some change of variables $x=t^N$ with a suitable
positive integer $N$.\\[0.1in]
The proof of this theorem will be given for each of the cases separately in Sections~\ref{thm2S}, \ref{thm2Q} and \ref{thm2M}. As in Section \ref{Sec2}, we present a few consequences of the theorem before  presenting these proofs.
These concern a system 
\begin{equation}\label{sys2s}S_j(f(x))  =  \sigma_j^{m_j}(f(x)) + b_{j,m_j-1}(x) 
\sigma_j^{m_j-1}(f(x)) + \ldots + b_{j,0}(x)f(x) =  0,\ j=1,2
\end{equation}
with $b_{j,i}(x) \in \CX(x)$.

In view of the simplest nontrivial system of this form, $y(x+1)=y(x),\,y(x+\alpha)=y(x)$, in  case 2S with
non-real $\alpha$, it is
necessary to consider elliptic functions if we are interested in solving (\ref{sys2s}) using meromorphic
functions.
We recall the functions needed here (see \cite{DLMF}, section 23.2). The Weierstrass $\wp$-function
is the unique 1- and $\alpha$-periodic meromorphic function that has exactly one double pole in the basic
parallelogram with vertices $\pm1/2\pm\alpha/2$ and satisfies 
$\wp(x)=\frac1{x^2}+{\cal O}(x)$ as $x\to0$. 
All elliptic (\ie\ meromorphic 1- and $\alpha$-periodic)
functions can be expressed as rational functions of $\wp$ and $\wp'$.
The Weierstrass $\zeta$-function is the odd antiderivative of $\wp$. It satisfies 
$\zeta(x+1)=\zeta(x)+2\eta_1,\zeta(x+\alpha)=\zeta(x)+2\eta_2$ for all $x$, where
$\eta_j$ are certain constants such that the vectors $(\eta_1,\eta_2)$ and $(1,\alpha)$ are
linearly independent. The Weierstrass $\sigma$-function is the solution of
$\sigma'/\sigma=-\zeta$ with $\sigma'(0)=1$. It is an entire function
vanishing at the origin
and satisfies
$\sigma(x+1)=e^{2\eta_1x+\eta_1}\sigma(x)$, $\sigma(x+\alpha)=e^{2\eta_2x+\eta_2\alpha}\sigma(x)$
for $x\in\CX$. We introduce an additional function $\rho$ by
$\rho(\delta,x)=\sigma(x+\delta)/\sigma(x)$.\footnote{ This function appears already in
the classical works of \cite{ApLac, Herm}, but seems to be not so well known.} 
It is a meromorphic function
with a simple pole at the origin that satisfies
$$\rho(\delta,x+1)=e^{2\eta_1\delta}\rho(\delta,x)\mbox{ and }
\rho(\delta,x+\alpha)=e^{2\eta_2\delta}\rho(\delta,x).
$$
\begin{cor}\label{cor2s} Consider $\sigma_1$ and $\sigma_2$ as in  case 2S.

If $f(x)$ is a meromorphic function in $\CX$ that solves a system (\ref{sys2s}) and   $\alpha$ is nonreal,
then 
\begin{equation}\label{2s1}f(x)  = \sum_{i=1}^I\sum_{k=0}^{K} 
r_{i,k}(x)g_{i,k}(x)\zeta(x)^k e^{\alpha_i x}\rho(\delta_i,x)\end{equation}
where $\alpha_i,\delta_i\in\CX$, $r_{i,k}(x) \in \CX(x)$ and $g_{i,k}(x)$
are elliptic functions.
{ The latter and the 
functions $\zeta$ and $\rho$ are taken with respect
to the periods $1$ and $\alpha$.}
Conversely, any such function satisfies 
a pair of linear difference equations with rational coefficients.

If $f(x)$ is a meromorphic function in $\CX$ that solves a system (\ref{sys2s}) and
$\alpha$ is real or $f(x)$ has only finitely many poles, then
\begin{equation}\label{2s2} f(x)  = \sum_{i=1}^I r_i(x) e^{\alpha_i x} \end{equation}
where $\alpha_i \in \CX$ and $r_i(x) \in \CX(x)$. 

If $f(x)\in\CX[[x^{-1}]][x]$ satisfies a system 
(\ref{sys2s}) then $f(x)$ is rational.
\end{cor}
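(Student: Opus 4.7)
The overall strategy parallels that of Corollaries \ref{cor0} and \ref{cor0a}, with Theorem \ref{thm2-1} playing the role previously played by Theorem \ref{thm1}. First I would establish a two-difference version of Lemma \ref{LS}: the $\CX(x)$-space $W_0$ spanned by $\{\sigma_1^i\sigma_2^j f : 0\le i<m_1,\,0\le j<m_2\}$ is finite dimensional and invariant under both $\sigma_1$ and $\sigma_2$ (because they commute and because $S_1, S_2$ annihilate $f$), and since shift operators preserve $\CX(x)$-linear independence, a $\CX(x)$-basis $w=(w_1,\dots,w_t)^T$ of $W_0$ satisfies a consistent system $\sigma_j w = B_j w$ with $B_j\in\GL_t(\CX(x))$. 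Theorem \ref{thm2-1} then produces $G\in\GL_t(\CX(x))$ such that $z := Gw$ satisfies $\sigma_j z = \tilde B_j z$ with commuting constant invertible $\tilde B_1,\tilde B_2$. Since $f\in W_0$ is a $\CX(x)$-combination of the $w_i$, and hence of the $z_i$, it suffices to describe each $z_i$ in each of the regimes.

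For the meromorphic nonreal-$\alpha$ statement, I simultaneously triangularize $(\tilde B_1,\tilde B_2)$ and decompose into joint generalized eigenspaces. On a block with joint eigenvalues $(\lambda_1,\lambda_2)$ I look for a scalar meromorphic solution of the form $h(x)=e^{ax}\rho(\delta,x)$; its multipliers under $(\sigma_1,\sigma_2)$ are $(e^{a+2\eta_1\delta},\,e^{a\alpha+2\eta_2\delta})$, and matching these to $(\lambda_1,\lambda_2)$ yields a $2\times 2$ linear system for $(a,\delta)$ with determinant $2(\eta_2-\eta_1\alpha)\neq 0$, exactly by the stated linear independence of $(1,\alpha)$ and $(\eta_1,\eta_2)$. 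For the nilpotent part of a Jordan block of size $\kappa$, the quasi-period identities $\zeta(x+1)-\zeta(x)=2\eta_1$ and $\zeta(x+\alpha)-\zeta(x)=2\eta_2$ allow iterative construction of further independent solutions $h(x)\zeta(x)^k$ with $0\le k<\kappa$. Finally, the $\sigma_1,\sigma_2$-invariant meromorphic functions on $\CX$ are exactly the elliptic functions for the lattice $\ZX+\ZX\alpha$, so the remaining ambiguity in the choice of a solution with prescribed multipliers is precisely an elliptic factor $g_{i,k}$. Taking the $\CX(x)$-combination of the reduction step yields $f$ in the form (\ref{2s1}).

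For $\alpha\in\RX\setminus\QX$, the lattice $\ZX+\ZX\alpha$ is dense in $\RX$, so by continuity a meromorphic function with periods $1$ and $\alpha$ is constant; no Weierstrass $\rho,\zeta$ are available, and the scalar diagonal solutions collapse to exponentials $e^{a_i x}$ with Jordan blocks contributing polynomial-in-$x$ factors that merge into $r_i(x)$, giving (\ref{2s2}). When $\alpha$ is nonreal but $f$ has only finitely many poles, each $z_i$ inherits that property; the transformation laws $\sigma_j z_i = \sum_k (\tilde B_j)_{ik} z_k$ force the pole set of $z_i$ to propagate across the lattice $\ZX+\ZX\alpha$, so a finite pole set must be empty and $z_i$ is entire. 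Since $\rho(\delta,\cdot)$, $\zeta$, and nonconstant elliptic functions all have poles, the only surviving contributions force $\delta_i=0$, $k=0$, and $g_{i,k}$ constant, leaving $z_i=c_i e^{a_i x}$ and again yielding (\ref{2s2}).

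The converse is direct: each basic factor $e^{\alpha x}\rho(\delta,x)$, $\zeta(x)^k$, elliptic $g(x)$, and $r(x)\in\CX(x)$ satisfies a scalar linear difference equation in both $\sigma_1$ and $\sigma_2$ with rational (in fact constant) coefficients --- for $\zeta^k$ by iterating the quasi-period relations, for elliptic functions trivially --- and arbitrary products and finite sums are handled by taking least common left multiples of the corresponding operators over $\CX(x)$. For the formal statement, run the reduction step with $f\in\CX[[x^{-1}]][x]$: the resulting $z\in\CX[[x^{-1}]][x]^t$ satisfies $\sigma_j z=\tilde B_j z$, and comparing top-degree coefficients in $z(x+1)=\tilde B_1 z(x)$ forces the leading vector into $\ker(\tilde B_1-I)\cap\ker(\tilde B_2-I)$; the induction on descending powers of $x$, exactly as in the proof of Corollary \ref{cor0a}, shows each $z_i$ is polynomial, so $f$ is rational. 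The main technical obstacle is the nonreal-$\alpha$ step: assembling the fundamental matrix in the presence of non-diagonalizable Jordan structure for both $\tilde B_j$ simultaneously, and verifying that the elliptic ambiguity packages cleanly into the asserted form (\ref{2s1}).
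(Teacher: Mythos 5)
Your overall architecture is the paper's: build the finite-dimensional $\CX(x)$-space $W$ from the shifts of $f$, extract a consistent first-order system, apply Theorem \ref{thm2-1}, and then solve the resulting constant system $\sigma_j(z)=\tilde B_j z$ in each function space. Your treatment of the nonreal-$\alpha$ meromorphic case is a legitimate scalar, block-by-block reorganization of the paper's construction of the matrix solution $Z(x)=e^{Cx}\rho(\Delta,x)$: the paper solves $2\eta_1\Delta+C=L_1$, $2\eta_2\Delta+\alpha C=L_2$ for commuting matrices $\Delta, C$, which is exactly your $2\times 2$ system at the level of logarithms, and identifies the remaining ambiguity $Z(x)^{-1}z(x)$ as elliptic. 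The converse and the formal case in $\CX[[x^{-1}]][x]$ are also handled essentially as in the paper.

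The gap is in the real-$\alpha$ and finitely-many-poles cases. What must actually be analyzed there is not a doubly periodic function but the $1$-periodic entire vector $c=e^{-L_1x}z$ satisfying the \emph{twisted} relation $c(x+\alpha)=\tilde Bc(x)$ with $\tilde B=\tilde B_2e^{-\alpha L_1}$; the density of $\ZX+\ZX\alpha$ in $\RX$ only disposes of the untwisted case $\tilde B=I$. In general no single exponential $e^{ax}$ with $e^a=\lambda_1$ need match the second multiplier, and when the compatibility $\lambda_2=e^{(a+2\pi i k)\alpha}$ fails for every integer $k$ one has to prove $c=0$, which density plus continuity does not give (it only yields that $|c|$ is constant on $\RX$). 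The ingredient you are missing is the paper's Fourier argument: expand $c(x)=\sum_k c_k e^{2\pi i kx}$ on a horizontal strip, deduce $\tilde Bc_k=e^{2\pi i k\alpha}c_k$, and note that the numbers $e^{2\pi i k\alpha}$ are pairwise distinct since $\alpha\notin\QX$, so only finitely many can be eigenvalues of $\tilde B$ and the series is a finite exponential sum. This argument also treats the finitely-many-poles subcase uniformly; your alternative there (``since $\rho$, $\zeta$ and nonconstant elliptic functions all have poles, the surviving contributions force $\delta_i=0$, $k=0$'') is not sound as stated, because entirety of $z_i$ does not prevent the poles of the individual terms in a representation of the form (\ref{2s1}) from cancelling.
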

\noindent{\bf Remark:} {1.\ }In the case of real irrational $\alpha$, a slight extensions of the proof shows that the statement also holds
if $f(x)$ is a function on the real line continuous in all but finitely many points solving a system (\ref{sys2s}).
Indeed, it suffices to use the vector space $E$ of all functions on the real line continuous in all but finitely many points
and to apply Fej\'er's Theorem instead of the simple Fourier series.
This also implies that the given $f(x)$ is analytic at the points of continuity and
can be continued analytically to a meromorphic function with finitely many poles on the whole complex plane. 
Similar extensions can be obtained
in the second part of the subsequent corollary and in Corollary \ref{cor2m}. 
A similar reasoning is also crucial in the proof of the Theorem in  case 2M.\\[0.1in]
\noindent 2.\ In \cite{BG96}, B\'ezivin and Gramain consider {\it entire} solutions of (\ref{sys2s}) for case 2S under the assumption that $\alpha \in {\CX\backslash\RX}$. They show that such solutions must be of the form given in  (\ref{2s2}). They generalize this result to entire functions of $s$ variables satisfying $2s$ difference equations with respect to suitably independent multi-shifts.  The techniques are similar to those mentioned in the Remark following Corollary~\ref{cor0a}. In \cite{BH99}, Brisebarre and Habsieger replace the condition that $\alpha \in {\CX\backslash\RX}$ with $\alpha \in \CX\backslash\QX$ but need several other nontrivial technical conditions on the coefficients of (\ref{sys2s}) to show that entire solutions are of the form given in (\ref{2s2}). The techniques are essentially algebraic, reducing this problem to a similar problem for equations with constant coefficients in a manner different from our approach.  In \cite{Bezivin00}, B\'ezivin shows essentially the statement in remark 1, \ie\ that for $\alpha \in \RX\backslash \QX$, a solution $f(x)$, continuous  on $\RX$, of (\ref{sys2s}) is of the form given in (\ref{2s2}). Using properties of skew polynomial rings, B\'ezivin reduces the problem to the case of constant coefficient equations. {In \cite{Mart}, Marteau considers systems of scalar equations of the form $\sum_{i=0}^Na_i(x)f(x+\alpha_i) = 0$ where the $a_i(x)$ are polynomials and the $\alpha_i \in \CX$. He shows that, under certain restrictions on the $\alpha_i$, real valued continuous solutions and entire solutions of such systems must also be of the form given in  (\ref{2s2}).   Using essentially algebraic techniques, constant coefficients systems are considered in \cite{Jolly, Mart}.}

\begin{cor}\label{cor2q} Consider $\sigma_1$ and $\sigma_2$ as in  case 2Q.
If $\alpha:=\log(q_2)/\log(q_1)$ is nonreal and  
$f(x)$ is a meromorphic function on the Riemann surface
$\hat\CX$ of the logarithm 
and if $f(x)$ is a solution of a system (\ref{sys2s}) then
\begin{equation}\label{2q1}f(x)  = \sum_{i=1}^I\sum_{j=0}^J\sum_{k=0}^{K} 
r_{i,j,k}(x) \log(x)^j x^{\alpha_i} g_{i,k}(t)\zeta(t)^k\rho(\delta_i,t),\ t=\log(x)/\log(q_1),
\end{equation}
where $\alpha_i,\delta_i\in\CX$, $r_{i,j,k}(x) \in \CX(x)$ and $g_{i,k}(t)$
are elliptic functions. The latter and the 
functions $\zeta$ and $\rho$ are taken with respect
to the periods $1$ and $\alpha$.
Conversely, any such function satisfies 
a pair of linear $q$-difference equations with rational coefficients.

If $f(x)$ is a meromorphic function on $\hat\CX$ solving
a system (\ref{sys2s}) and $\alpha$ is real or $f(x)$ has only finitely many poles then
\begin{equation}\label{2q2} f(x)  = \sum_{i,j=0}^I r_{ij}(x) x^{\alpha_i}\log(x)^j 
\end{equation}
where $\alpha_i \in \CX$ and $r_{ij}\in\CX(x)$.

If $f(x)\in\CX[[x]][x^{-1}]$ satisfies a system 
(\ref{sys2s}) then $f(x)$ is rational.
\end{cor}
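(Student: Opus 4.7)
The plan is to mirror the strategy used in Corollary \ref{cor0}: reduce the scalar pair to a consistent first-order matrix system, apply the reduction Theorem \ref{thm2-1}, and explicitly solve the resulting constant-coefficient system. First I would establish the obvious two-operator analog of Lemma \ref{LS}: the $\CX(x)$-span of $\{\sigma_1^i\sigma_2^j f\}$, $0\le i<m_1$, $0\le j<m_2$, is invariant under both $\sigma_1$ and $\sigma_2$. Passing, exactly as in the proof of Lemma \ref{LS}, to the stable member of a decreasing chain $W_\ell$ to restore $\CX(x)$-linear independence of the basis after applying the $\sigma_j$, I obtain a finite-dimensional $\CX(x)$-space $W$ with basis $w=(w_1,\ldots,w_t)^T$ satisfying a consistent system $\sigma_j w = B_j w$ with $B_j\in\GL_t(\CX(x))$; some $\sigma_1^{s_1}\sigma_2^{s_2}f$ lies in the $\CX(x)$-span of the $w_i$. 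Theorem \ref{thm2-1} then provides a gauge transformation $z=Gw$, $G\in\GL_t(\CX(x))$, reducing to $\sigma_j z = \tilde B_j z$ with commuting constants $\tilde B_j\in\GL_t(\CX)$.

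The key step is the substitution $t=\log(x)/\log(q_1)$, which is a bijection of $\hat\CX$ onto $\CX$ and which converts the constant $q$-difference system into the constant shift system $\hat z(t+1)=\tilde B_1\hat z(t)$, $\hat z(t+\alpha)=\tilde B_2\hat z(t)$ of case 2S with $\alpha=\log(q_2)/\log(q_1)$. After simultaneously triangularizing the commuting $\tilde B_j$, the analysis reduces to scalar sub-problems $\hat z(t+1)=\lambda\hat z(t)$, $\hat z(t+\alpha)=\mu\hat z(t)$ together with nilpotent off-diagonal parts. In the nonreal-$\alpha$ case, choosing $\beta,\delta\in\CX$ with $e^{\beta+2\eta_1\delta}=\lambda$ and $e^{\beta\alpha+2\eta_2\delta}=\mu$, the function $\rho(\delta,t)e^{\beta t}$ is a particular solution of the semisimple scalar problem; any other solution differs from it by a $(1,\alpha)$-doubly-periodic factor, i.e.\ an elliptic function $g(t)$. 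For the joint nilpotent Jordan structure one builds a basis using polynomial factors in the two available ``periodic primitives'' $t$ and $\zeta(t)$, both of which have constant $\sigma_1$- and $\sigma_2$-increments ($1,\alpha$ and $2\eta_1,2\eta_2$ respectively). Converting back via $x=q_1^t$, the factor $e^{\beta t}$ becomes $x^{\alpha_i}$, polynomials in $t$ become polynomials in $\log(x)$, while $\zeta(t)^k$, $g(t)$, $\rho(\delta,t)$ remain as functions of $t=\log(x)/\log(q_1)$; combining with the rational factors coming from $f=\sum r_i(G^{-1}z)_i$ yields precisely the form (\ref{2q1}).

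For real $\alpha$, the lattice $\ZX+\alpha\ZX$ is dense in $\RX$, so any meromorphic function on $\CX$ with periods $1$ and $\alpha$ is constant, and the Weierstrass functions $\wp,\zeta,\sigma,\rho$ are not even defined for this lattice (two $\RX$-independent periods are required). The elliptic and $\rho$ factors therefore collapse, leaving only scalar solutions of the form $P(t)e^{\beta t}$, which under $x=q_1^t$ become a polynomial in $\log(x)$ times $x^{\alpha_i}$, as in (\ref{2q2}). The same conclusion holds under the finitely-many-poles hypothesis: any nonzero $\rho(\delta,\cdot)$ and any nonconstant elliptic function has poles on the whole lattice $\ZX+\alpha\ZX$, corresponding under $x=q_1^t$ to infinitely many distinct poles of $f$ in $\hat\CX$, contradicting the hypothesis. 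The formal Laurent assertion is handled directly in the reduced system: the same reduction (now with $E=\CX[[x]][x^{-1}]$) gives $z\in\CX[[x]][x^{-1}]^t$; writing $z=\sum_m c_m x^m$ in $\sigma_1 z=\tilde B_1 z$ forces $\tilde B_1 c_m=q_1^m c_m$, and since $q_1$ is not a root of unity only finitely many $m$ yield an eigenvalue of $\tilde B_1$, whence $z\in\CX[x,x^{-1}]^t$ and $f\in\CX(x)$. The converse direction is immediate since each building block ($x^{\alpha_i}$, $\log(x)^j$, elliptic $g(t)$, $\zeta(t)^k$, $\rho(\delta_i,t)$, rational $r(x)$) satisfies a scalar pair of $q$-difference equations over $\CX(x)$, and the set of common solutions is closed under sums and products via LCLM and tensor-product constructions.

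The main obstacle I expect is the explicit joint Jordan-chain bookkeeping: one must exhibit a full simultaneous generalized eigenspace basis for the commuting operators $\tilde B_1,\tilde B_2$ on the space of meromorphic functions on $\CX$, combining the multiplicative factors $\rho(\delta,t)e^{\beta t}$, the automorphic elliptic factors $g(t)$, and the polynomial factors in the primitive pair $(t,\zeta(t))$, and then verify that these functions together suffice to produce all of (\ref{2q1}) after the change of variables back to $x$; a secondary technical point is checking that the gauge $G^{-1}(x)$ introduces only the rational factors $r_{i,j,k}(x)$ and preserves meromorphy on $\hat\CX$.
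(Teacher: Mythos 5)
Your overall strategy is exactly the paper's: reduce (\ref{sys2s}) to a consistent first-order system via the analogue of Lemma~\ref{LS}, apply Theorem~\ref{thm2-1}, substitute $t=\log(x)/\log(q_1)$ to land in a constant-coefficient case-2S system, and solve that explicitly. Your treatment of the nonreal-$\alpha$ case (particular solution $e^{Ct}\rho(\Delta,t)$ built from commuting logarithms, elliptic quotient, Jordan-block expansion producing the $\zeta(t)^k$ and the $\log(x)^j$ from $x^C$) and of the formal Laurent case ($\tilde B_1c_m=q_1^mc_m$ forces finitely many nonzero coefficients) both match the paper.

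There is, however, a genuine gap in your second case. For real $\alpha$ (and for the finitely-many-poles hypothesis) you argue that the Weierstrass functions ``are not defined'' for the degenerate lattice and that ``the elliptic and $\rho$ factors therefore collapse, leaving only scalar solutions of the form $P(t)e^{\beta t}$.'' This is a non sequitur: the nonexistence of the particular solution $e^{Ct}\rho(\Delta,t)$ only means the nonreal-$\alpha$ construction is unavailable; it does not yield any positive description of the solution space. What is needed (and what the paper does) is to set $c=e^{-L_1t}z$, observe that $c$ is $1$-periodic and satisfies $c(t+\alpha)=\tilde Bc(t)$, conclude that $c$ is entire because a single pole would propagate to the set $t_0+\ZX+\alpha\ZX$, which is dense in a horizontal line for real irrational $\alpha$ and infinite in the finitely-many-poles case, and then expand $c$ in a Fourier series $\sum c_ke^{2\pi ikt}$. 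The second equation forces $\tilde Bc_k=e^{2\pi ik\alpha}c_k$, so only finitely many modes survive; only then does $z=e^{L_1t}c$ reduce to a finite sum of terms $P(t)e^{\beta t}$. Without the Fourier step your argument leaves open the possibility of an arbitrary $1$-periodic entire factor (e.g.\ something like $\exp(e^{2\pi it})$), and the finitely-many-poles case with nonreal $\alpha$ cannot be settled by inspecting the poles of $\rho$ and of elliptic functions, since that presupposes the representation (\ref{2q1}) whose terms could a priori cancel poles among themselves. The rest of your outline, including the converse via explicit annihilators and closure under sums and products, is sound.
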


{\noindent{\bf Remark:} In \cite{BB92} B\'ezivin and Boutabaa use $p$-adic techniques to show that if $f(x)\in {F}[[x]][x^{-1}]$, where {\it {$F$} is the field of algebraic numbers}, satisfies a system 
(\ref{sys2s}) for case 2Q with $q_1, q_2 \in K$ multiplicatively independent, then $f(x)$ is rational. They prove a similar result for {$F$} any characteristic $0$ field assuming $q_1$ and $q_2$ are algebraically independent over $\QX$. In \cite{Bezivin00}, B\'ezivin shows that for $q_1,q_2$ multiplicatively independent positive real numbers, a solution of (\ref{sys2s}) that is continuous on the $]0,\infty[$ is of the form (\ref{2q2}). The proof again uses properties of skew polynomial rings and constant coefficient equations. }

In  case 2M, equations like $\log(\log(x^q))=\log(\log(x))+\log(q)$ yield interesting 
solutions. This suggests to consider the Riemann surface $\check\CX$ of $\log(\log(x))$. 
It is obtained by deleting the  point $1e^{i0}$, \ie\ the point with logarithm 0, from
the Riemann surface $\hat\CX$ of the logarithm and taking the universal covering of the remaining
manifold. It is biholomorphically mapped to $\hat\CX$ by $t=\log(x)$, biholomorphically
to $\CX$ by $s=\log(\log(x))$~.
\begin{cor}\label{cor2m} Consider $\sigma_1$ and $\sigma_2$ as in  case 2M.

If $f(x)$ is a meromorphic function on 
the universal cover of the open punctured unit disk $D(0,1)\setminus\{0\}$ (or on 
the universal cover of the annulus $\{x\in\CX\mid |x|>1\}$) solving
a system (\ref{sys2s}), then $f(x)$ can be continued 
analytically to a meromorphic function on ${\check\CX}$ and 
\begin{equation}\label{2m2} f(x)  = \sum_{i,j=0}^I r_{ij}(x) (\log(x))^{\alpha_i}\log(\log(x))^j 
\end{equation}
where $\alpha_i \in \CX$ and $r_{ij}\in\CX(\{x^{1/r}\mid r\in\NX^*\})$.
Conversely, any such function satisfies a pair of linear Mahler equations with rational coefficients.

If $f(x)$ is a meromorphic function on $\hat\CX$ solving a system (\ref{sys2s}) then
\begin{equation}\label{2m3} f(x)  = \sum_{j=-I}^I r_{j}(x) (\log(x))^j 
\end{equation}
where $r_{j}\in\CX(\{x^{1/r}\mid r\in\NX^*\})$.

If $f(x)\in\CX[[x]][x^{-1}]$ satisfies a system 
(\ref{sys2s}) then $f(x)$ is rational.
\end{cor}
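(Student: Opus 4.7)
The plan is to imitate Corollaries~\ref{cor2s} and \ref{cor2q}: convert the scalar pair (\ref{sys2s}) into a consistent system (\ref{eq2-1}), apply Theorem~\ref{thm2-1} to make the coefficients constant and commuting, solve the resulting constant system explicitly in the variable $s=\log\log x$, and translate back.

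\emph{Construction of the consistent system.} Mimicking Lemma~\ref{LS}, let $W_0\subset E$ (where $E$ is the appropriate function algebra for each of the four assertions) be the $\CX(x)$-span of $\sigma_1^{i_1}\sigma_2^{i_2}f$ for $0\leq i_j<m_j$; the equations $S_j(f)=0$ and the commutativity $\sigma_1\sigma_2=\sigma_2\sigma_1$ make $W_0$ invariant under both $\sigma_j$. Since $\sigma_j$ does not preserve $\CX(x)$-linear independence in case M, replace $W_0$ by the first stabilizer $W=W_s$ of the descending chain $W_{\ell+1}=\mathrm{Span}_{\CX(x)}(\sigma_1(W_\ell)\cup\sigma_2(W_\ell))$, exactly as in the proof of Corollary~\ref{cor0}. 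A basis $w=(w_1,\ldots,w_t)^T$ of $W$ yields matrices $B_j\in\GL_t(\CX(x))$ with $\sigma_j(w)=B_j w$, and the consistency condition (\ref{eq2-2}) follows from $\sigma_1\sigma_2=\sigma_2\sigma_1$ and the $\CX(x)$-linear independence of the $w_i$. For some $s\in\NX$, $\sigma_1^s\sigma_2^s f\in\sum_i\CX(x)\,w_i$.

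\emph{Constant reduction and explicit solution.} Theorem~\ref{thm2-1} supplies $G\in\GL_t(K)$, $K=C(\{x^{1/r}\mid r\in\NX^*\})$, such that $z=Gw$ satisfies $\sigma_j z=\tilde B_j z$ with commuting constant $\tilde B_j\in\GL_t(\CX)$. In the variable $s=\log\log x$, each $\sigma_j$ becomes the shift $s\mapsto s+\log q_j$, and since $q_1,q_2$ are multiplicatively independent positive integers, $\log q_1,\log q_2$ are $\QX$-linearly independent real numbers. Any meromorphic function periodic under the dense subgroup $\log q_1\,\ZX+\log q_2\,\ZX\subset\RX$ must be constant; combined with simultaneous triangularization of $\tilde B_1,\tilde B_2$, the meromorphic solution space is therefore spanned by $v_i\,e^{\alpha_i s}P_i(s)$ with $P_i\in\CX[s]$, where the exponent $\alpha_i$ is determined by the simultaneous eigenvalues via $q_j^{\alpha_i}=\lambda_{j,i}$. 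Going back through $w=G^{-1}z$ (coefficients in $K$), each $w_i$ is a finite $K$-combination of $(\log x)^{\alpha_i}(\log\log x)^j$, hence so is $\sigma_1^s\sigma_2^s f$, and hence so is $f$ itself (undoing the Mahler iterates via the inverse branches of $x\mapsto x^{q_j^s}$ preserves both $K$ and the building blocks modulo constants). This establishes (\ref{2m2}) and, simultaneously, the meromorphic continuation of $f$ to $\check\CX$.

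\emph{The remaining assertions.} For the converse, any finite $K$-combination of the building blocks $r_{ij}(x)(\log x)^{\alpha_i}(\log\log x)^j$ generates, under $\sigma_1,\sigma_2$, a finite-dimensional $K$-space (since $\sigma_j$ acts on the building blocks by multiplying $(\log x)^{\alpha_i}$ by $q_j^{\alpha_i}$ and substituting $\log\log x\mapsto\log q_j+\log\log x$, expanded by the binomial theorem), and averaging over the Galois automorphisms $x^{1/r}\mapsto\zeta x^{1/r}$ for $r$-th roots of unity $\zeta$ produces scalar Mahler equations over $\CX(x)$. If $f$ is meromorphic on $\hat\CX$, then $\log\log x$ is ramified at $\log x=0$, so all coefficients of $(\log\log x)^j$ with $j\geq1$ must vanish; the monodromy on $\hat\CX$ around $x=1$ (equivalently around $\log x=0$) then multiplies each surviving $(\log x)^{\alpha_i}$ by $e^{2\pi i\alpha_i}$, forcing $\alpha_i\in\ZX$; this yields (\ref{2m3}). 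Finally, if $f\in\CX[[x]][x^{-1}]$, the same reduction gives $z\in\CX[[x]][x^{-1}]^t$ with $\sigma_j z=\tilde B_j z$; comparing coefficients of $x^\ell$ shows only finitely many are nonzero, so $z\in\CX[x,x^{-1}]^t$ and $\sigma_1^s\sigma_2^s f$ is rational; the roots-of-unity averaging of Corollary~\ref{cor0} (applied twice, for $q_1^s$ and then $q_2^s$) transfers rationality back to $f$.

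\emph{Main obstacle.} The most delicate point is the description of the meromorphic solution space of the constant-coefficient system in the $s$-variable: one needs the density of $\log q_1\,\ZX+\log q_2\,\ZX$ in $\RX$ (equivalent to multiplicative independence of $q_1,q_2$) to rule out nontrivial periodic transcendental solutions, and one must then cleanly separate the $K$-rational factors from the transcendental factors $(\log x)^{\alpha_i}(\log\log x)^j$ when pulling back through $G^{-1}\in\GL_t(K)$, taking care that the Mahler-iterate undoing $\sigma_j^{-s}$ does not introduce new fractional powers outside $K$.
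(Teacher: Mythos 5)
Your overall strategy is the paper's: build a consistent system (\ref{eq2-1}) from the scalar pair, apply Theorem~\ref{thm2-1}, solve the constant system in the variable $\log\log x$ via the real-irrational-$\alpha$ analysis of case 2S, and translate back. Two steps, however, have genuine problems. First, your descending chain $W_{\ell+1}=\mathrm{Span}(\sigma_1(W_\ell)\cup\sigma_2(W_\ell))$ stabilizes to a $W$ satisfying only $\mathrm{Span}\,\sigma_1(W)+\mathrm{Span}\,\sigma_2(W)=W$; this does not give $\mathrm{Span}\,\sigma_j(W)=W$ for each $j$ separately, so the matrices $B_j$ you extract need not both be invertible, and Theorem~\ref{thm2-1} requires $B_j\in\GL_t$. (Dimension can genuinely drop under one $\sigma_j$ while the other compensates.) The paper sidesteps this entirely: it first reduces to $b_{j,0}(x)\neq0$ by replacing $f$ with $\sigma_1^a\sigma_2^b(f)$, works over $L=K=\CX(\{x^{1/r}\})$ on which the $\sigma_j$ are automorphisms, and then $W_0$ itself is invariant under $\sigma_j^{\pm1}$ with no stabilization needed. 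Your chain can be repaired by alternately stabilizing under $\sigma_1$ and $\sigma_2$ until both spans fill $W$, but as written this is a gap. Second, in the formal Laurent series case your coefficient comparison is the one for $q$-dilations, not for Mahler operators: since $\sigma_j(x^{m/s})=x^{q_jm/s}$, the relation is $\tilde B_j c_m=c_{m/q_j}$ (with $c_m=0$ when $q_j\nmid m$), which forces $z$ to be \emph{constant}, not merely a Laurent polynomial; moreover $z=Gw$ lies in $\bigcup_s\CX[[x^{1/s}]][x^{-1/s}]$, not in $\CX[[x]][x^{-1}]$ as you assert. The final conclusion survives, but the stated mechanism is incorrect.

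Two further remarks. Your derivation of (\ref{2m3}) from (\ref{2m2}) by a monodromy argument around the point of $\hat\CX$ where $\log x=0$ is a genuinely different route from the paper, which instead sets $t=\log x$, reduces to a constant case-2Q system for a function meromorphic on all of $\CX$, and reads off $z\in\CX[t,t^{-1}]^t$ from the Laurent expansion at $t=0$. Your route is plausible but requires you to prove the $K$-linear independence of the blocks $(\log x)^{\alpha_i}(\log\log x)^j$ in order to split $f$ into monodromy eigencomponents and kill those with eigenvalue $\neq1$ or nontrivial Jordan structure; the paper's route avoids this. Finally, your phrase ``any meromorphic function periodic under the dense subgroup $\log q_1\,\ZX+\log q_2\,\ZX$ must be constant'' is not literally what is used: the auxiliary function $c$ satisfies $c(s+1)=c(s)$ but $c(s+\alpha)=\tilde Bc(s)$, so density of the lattice is used only to show $c$ has no poles, after which the finite Fourier expansion (eigenvalues of $\tilde B$) gives the form of the solution — this is the case-2S argument you are importing, and it should be quoted accurately.
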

{\noindent {\bf Remark:} As noted in the Introduction, the last statement of the above corollary was recently proved  by Adamczewski and Bell in \cite{AuB16}. Their tools include a local-global principle to reduce the problem to a similar problem over finite fields, Chebotarev's Density Theorem, Cobham's Theorem and  some asymptotics - all very different from the techniques used in the present work.

If one is only interested in proving the last statement, the application of Theorem \ref{thm2-1} in the 
proof of this
corollary can be replaced with the weaker Proposition \ref{2mprop}.  
This is discussed in the remarks following
Proposition \ref{2mprop}.\\

We now turn to the proof of the three corollaries.}

\begin{prf} {We will prove these three corollaries in parallel, diverging from this plan only when the cases force us to.}
We can assume without loss in generality that $b_{j,0}(x)\neq0$, $j=1,2$.
Otherwise in  cases 2S and 2Q, we can simply apply the inverses of $\sigma_1$
or $\sigma_2$. In  case 2M, we rewrite the system as a system
for a new function $\tilde f(x)=\sigma_1^{a}\sigma_2^{b}(f(x))$ with suitable positive integers
$a,b$, applying some powers of $\sigma_1$ or $\sigma_2$ to the equations. 
We then first obtain that $\tilde f(x)$ is as stated
in Corollary \ref{cor2m}. In the first two cases {of this corollary}, it follows immediately that $f(x)$ also has
the wanted form. In the last case, we obtain a series $f(x)\in\CX[[x]][x^{-1}]$ such that
$\sigma_1^{a}\sigma_2^{b}(f(x))=f(x^{q_1^aq_2^b})\in\CX(x)$. As shown at the end of the proof of
Corollary \ref{cor0}, this implies that $f(x)\in\CX(x)$. 

We proceed as for the corollaries concerning differential and difference equations.
The first part is the same for all the cases. Let $E$ be the vector space of all meromorphic
functions $f$ on $\CX$ for the first two cases of Corollary \ref{cor2s}, 
the vector space of  all meromorphic
functions $g$ on $\hat\CX$ for the first two cases of Corollary \ref{cor2q} and the second case
of Corollary \ref{cor2m}, and the vector space of
all meromorphic functions on $D(0,1)\setminus\{1\}$  in
the first case of Corollary \ref{cor2m}. Let
$E=\CX[[x^{-1}]][x]$  {or
$E=\CX[[x]][x^{-1}]$ or $E=\cup_{s\in\NX^*}\CX[[x^{1/s}]][x^{-{1/s}}]$ in the remaining cases
concerning formal power series, respectively}. 
Let $L=\CX(x)$ for Corollaries \ref{cor2s}, \ref{cor2q} and $L=K{=\CX(\{x^{1/r}\mid 
r\in\NX^*\})}$ for Corollary \ref{cor2m}.
In all cases, $\sigma_j$, $j=1,2$, 
are extended in the canonical way to automorphisms of $E$ and the extensions commute.

Consider the $L$-subspace $W$ of $E$ generated by $\sigma_1^m\sigma_2^n(f)$, 
$m=0,...,m_1-1,\ n=0,...m_2-1$. By (\ref{sys2s}), $W$ is invariant under $\sigma_j$ and
$\sigma_j^{-1}$; here the facts that $\sigma_j$ commute and that  $b_{j,0}(x)\neq0$ are used.

Let $w_1,...,w_s$ be an $L$-basis of $W$ and let $w=(w_1,...,w_s)^T$. Then we have
that 
\begin{equation}\label{sysW} \sigma_j(w)=B_j(x)w,\ j=1,2\end{equation}
with $B_j\in \GL_s(L)$ because the {components} of $\sigma_j(w)$ are again a basis of $W$.
The coefficient matrices of (\ref{sysW}) satisfy the consistency condition (\ref{eq2-2}). 
Indeed,
$$0=\sigma_1(\sigma_2(w))-\sigma_2(\sigma_1(w))=(\sigma_1(B_2)B_1-\sigma_2(B_1)B_2)w$$
and as the {components} of $w$ form a basis we obtain (\ref{sysW}).

Now we apply Theorem \ref{thm2-1} to the system $\sigma_j(Y)=B_j(x)Y,\ j=1,2$. 
It yields a gauge transformation $Z=GY$, $G\in\GL_n(L)$, that transforms the system to 
$\sigma_j(Z)=\tilde B_j Z,\ j=1,2$ where $\tilde B_j$ are constant commuting matrices.
The vector $z=Gw\in W^s$ satisfies $\sigma_j(z)=\tilde B_j z,\ j=1,2$.
Once we have proved that its components $z_1,...,z_s$ have the form desired in each of the
cases of the corollaries, the same will be true for $f(x)$ because the $z_i$ again form
a basis of $W$ -- only for the last part of Corollary \ref{cor2m} this has to be modified somewhat.

It remains to solve the system  
\begin{equation}\label{const}\sigma_j(z)=\tilde B_j z,\ j=1,2\end{equation} 
with constant invertible commuting $\tilde B_j$ in each of the spaces $E$.

{We now proceed as follows. The three corollaries contain nine cases in total. 
In each of these cases  we will consider functions meromorphic on a given domain or formal Laurent series and show that they are of the desired form.  Once we have finished this we will prove the converse statements.}

 We first consider  case 2S on the space of meromorphic functions on $\CX$.
If $\alpha$ is nonreal, then (\ref{const}) can be solved using the function $\rho$
and exponential functions.
Consider two commuting logarithms of $\tilde B_j$, \ie\ commuting matrices $L_j$
such that $\tilde B_j=\exp(L_j)$, $j=1,2$. 
Then there are uniquely determined (commuting) matrices $\Delta$ and $C$ such that
$$\begin{array}{rcl}
2\eta_1\,\Delta+C&=&L_1\\2\eta_2\,\Delta+\alpha C&=&L_2
\end{array}
$$  {where $(\eta_1, \eta_2)$ are as in the above definition of the $\zeta$-function.}
The matrix-valued function%
\footnote{See \cite{Ga}, chapter V, for the extension of entire functions to matrices.}
$Z(x)=e^{Cx}\rho(\Delta,x)=\frac1{\sigma(x)}e^{Cx}\sigma(x+\Delta)$
has meromorphic entries and
satisfies $Z(x+1)=e^Ce^{2\eta_1\Delta}Z(x)=\tilde B_1Z(x)$ and similarly 
$Z(x+\alpha)=\tilde B_2Z(x)$. If $z$ is a vectorial solution of (\ref{const}), then
$c(x)=Z(x)^{-1}z(x)$ is 1- and $\alpha$-periodic and meromorphic. Hence its components
are elliptic functions.

In order to express $\sigma(x+\Delta)$ using scalar functions we proceed as it is well known for
the exponential. Let $T$ be invertible such that $J=T^{-1}\Delta T$ is in Jordan canonical form
and let $J=D+N$ with diagonal $D$ and nilpotent $N$, $D=\diag(d_1,...,d_s)$.
Then 
$$\sigma(x+\Delta)=T\sigma(x+D)\left(I+\sum_{k=1}^{s-1}\frac1{k!}\sigma(x+D)^{-1}
   \sigma^{(k)}(x+D)N^k\right)T^{-1}\ ;$$
here $\sigma(x+D)=\diag(\sigma(x+d_1),...,\sigma(x+d_s))$ and similarly $\sigma(x+D)^{-1}\sigma^{(k)}(x+D)$ is the diagonal matrix
of the quotients $\frac{\sigma^{(k)}(x+d_j)}{\sigma(x+d_j)}$ which in turn 
can be expressed using powers of $\zeta(x+d_j)$ and elliptic functions.
Finally as $\zeta(x+d_j)-\zeta(x)$ are elliptic, $\frac1{\sigma(x)}\sigma(x+\Delta)$ can be expressed using
the functions $\rho(d_j,x)$, powers of $\zeta(x)$ and elliptic functions.
  
If $\alpha$ is real and irrational or the number of poles of $f$ is finite, 
then we consider again commuting logarithms $L_j$ of the coefficient matrices
$\tilde B_j$.
Put $c=\exp(-L_1 x)z$. Then $c$ satisfies $\sigma_1(c)=c$ and $\sigma_2(c)=\tilde Bc,$  $\tilde B = \tilde B_2 e^{-\alpha L_1}$.
Then $c$ cannot have a pole at all, because otherwise by the two equations all points
of the lattice $\{ k+m\alpha|k,m\in\ZX\}$ would be poles and hence 
the set of poles would be dense for real irrational $\alpha$ or infinite otherwise.

As $c$ is 1-periodic and entire, it can be expanded in a Fourier series on $\CX$.
On some horizontal strip $S$ of finite width, we have the uniformly convergent series
$$c(x)=\sum_{k=-\infty}^\infty c_k e^{2\pi i k x},\ x\in S.$$
The second equation satisfied by $c$, \ie 
$$c(x+\alpha)=\tilde Bc(x)\mbox{ for }x\in  S$$
implies that $\tilde Bc_k=\exp(2\pi i k \alpha)c_k$
for $k\in\ZX$. If $c_k\neq0$ for some $k$ then $\exp(2\pi i k \alpha)$ is an eigenvalue of
$\tilde B$. As there are only finitely many such eigenvalues, the above Fourier series
can contain only a finite number of terms. 
This proves that 
$c(x)=\sum_{k=-k_0}^{k_0} c_k e^{2\pi i k x},\ x\in \CX$
with some positive integer $k_0$. As a consequence $z=\exp(L_1 x)c$ is of the desired form.

On $E=\CX[[x^{-1}]][x]$, we can assume that $\tilde B_1$ is diagonal according
to a remark following the theorem.
Hence it suffices to show that the solution of $\sigma_1(z(x))=d\,z(x)$ in $E$ 
is a constant for every complex number $d$.
Write such a solution  as 
$z(x)=\sum_{k=k_0}^{\infty}c_k\,x^{-k}$
with some $k_0\in\ZX$ and $c_k\in \CX$, $k\geq k_0$. Unless $z=0$, we can assume that
$c_{k_0}\neq0$. Comparing the coefficients of $x^{-k_0}$, we find that
$d\, c_{k_0}=c_{k_0}$. Hence $d=1$. Comparing the coefficients
of $x^{-k_0-1}$ we find that $ c_{k_0+1}=c_{k_0+1}-c_{k_0}k_0$. Hence $k_0=0$.
The new series $\tilde z(x)=z(x)-c_0$ is again a solution of   $\sigma_1(\tilde z(x))=\tilde z(x)$
but unless $\tilde z(x)=0$, the corresponding series 
$\tilde z(x)=\sum_{k=k_1}^{\infty}c_k\,x^{-k}$ has a positive $k_1$ which is impossible as seen above. 
Hence $\tilde z(x)=0$ and $z$ is a constant.

In  case 2Q and in the space of meromorphic functions on $\hat\CX$, we simply put
$t=\log(x)/\log(q_1)$ and consider $z$ as  a {vectorial} function  of $t$ now. 
The system $\sigma_j(z)=\tilde B_j z,\ j=1,2$ is then transformed
into a system $z(t+1)=\tilde B_1z(t)$, $z(t+\alpha)=\tilde B_2z(t)$,  
for a function $z$ meromorphic on
$\CX$. The above considerations then prove that {the entries of } $z$ {have} the desired form 
in the cases of nonreal and real $\alpha$. The only difference is that in the expansion of
$x^C=e^{C\log(q_1)t}$ using scalar functions, logarithms of $x$ may occur.  

In  case 2Q, $E=\CX[[x]][x^{-1}]$, we write a solution of 
$\sigma_j(z(x))=\tilde B_j\,z(x)$, $j=1,2$,
as a series  $z(x)=\sum_{k=k_0}^{\infty}c_k\,x^{k}$
with some $k_0\in\ZX$ and $c_k\in \CX^s$, $k\geq k_0$.
We obtain that each $c_k$ satisfies $\tilde B_jc_k=q_j^kc_k$,
$j=1,2$. Therefore unless $c_k=0$ for some $k$, the numbers $q_j^k$
must be eigenvalues of $\tilde B_j$ for $j=1,2$. 
As there are only finitely many eigenvalues we obtain that  
{$z(x)\in (\CX[x,x^{-1}])^s\subset(\CX(x))^s$}.

In  case 2M and in the space of meromorphic functions on $D(0,1)\setminus\{1\}$, we put
$s=\frac{\log(\log(x))}{\log(q_1)}$ and consider $z$ as a function of $s$ now.
Proceeding as above \footnote{The restriction of the domain
causes no additional problem. } with the resulting system of difference equations
with irrational $\alpha=\frac{\log(q_2)}{\log(q_1)}$, the result follows readily. {The result concerning meromorphic functions on the annulus
$\{x \in \CX \ | \ |x|>1\}$  is reduced to the one on the
punctured unit disk by the change of variables $x\to1/x$}.
Observe that in this case, the reduction to a constant system uses a gauge transformation
with coefficients in $\CX(\{x^{1/r}\mid r\in\NX^*\})$.

In  case 2M and in the space of meromorphic functions on $\hat\CX$, we have to
find solutions of the  system $\sigma_j(z(x))=\tilde B_j z(x),\ j=1,2$, 
that are meromorphic in $\hat\CX$. {
  Letting $t = \log x$ we can reduce this problem to   the search for solutions, meromorphic in $\CX$, of a system with constant coefficients  in case 2Q. One then considers the series expansion  at the origin of such solutions and concludes, as in case 2Q, that these vectors have entries in $\CX[t,t^{-1}]$. Therefore   the entries of $z(x)$ involve powers of $\log x$.}

In  case 2M, $f(x)\in\CX[[x]][x^{-1}]$, we have to solve 
$\sigma_j(z(x))=\tilde B_j\,z(x)$, $j=1,2$,
in $E=\cup_{s\in\NX^*}\CX[[x^{1/s}]][x^{-{1/s}}]$. Expanding
a solution in a series immediately proves that $z(x)$ is constant.
Therefore $f(x)$ is an element of $K$. We write
$f(x)=g(x^{1/s})$ with some rational function $g\in\CX(x)$ and some positive integer
$s$. As in the proof of Corollary \ref{cor0}, this yields $f(t^s)=g(t)$
and therefore also $f(t^s)=g(\xi t)$ for any $s$-th root of unity $\xi$.
Hence $f(t^s)=\frac1s \sum_{\xi^s=1}g(\xi t)\in\CX(t^s)$. This yields
that $f(x)$ is rational.

For the converse concerning the shift operators,  
consider first $f(x)=g(x)\zeta(x)^je^{\beta x}\rho(\delta,x)$
where $\beta,\delta\in\CX$, $j$ is some positive integer and
$g$ is an elliptic function. By the properties of $\rho$ and the exponential,
we obtain with $a_1=e^{\beta+2\eta_1\delta}$ that 
$(\sigma_1-a_1)^{j+1}(f(x))=0$. Again products of solutions of $\sigma_1$-difference equations
with rational functions also satisfy $\sigma_1$-difference equations and sums of 
solutions of $\sigma_1$-difference equations also satisfy some $\sigma_1$-difference equation.
Therefore $f(x)$ given by (\ref{2s1}) also does.
We obtain the $\sigma_2$-difference equation for $f(x)$ in the same way.

For the proof of the converse concerning two $q$-difference equations, we proceed analogously.
A slight change is that we have to consider 
$$f(x)=\log(x)^ix^\beta g(t)\zeta(t)^j\rho(\delta,t),\ t=\log(x)/\log(q_1).$$ 
Here $(\sigma_1-a_1)^{i+j+1}(f(x))=0$ for $a_1=q_1^\beta e^{2\eta_1\delta}$ and
$(\sigma_2-a_2)^{i+j+1}(f(x))=0$ for $a_2=q_2^\beta e^{2\eta_2\delta}$. 

For the proof of the converse concerning two Mahler systems, consider
$$f(x)=x^{r}(\log(x))^{\alpha}(\log(\log(x)))^j$$ with $r\in\QX$, $\alpha\in\CX$
and $j\in\NX$. There are $k\in\NX^*$, $m\in\NX$ such that $(q_1^k-1)q_1^mr=:\ell$ is an integer.
Then 
$$(q_1^{-k\alpha}x^{-\ell}\sigma_1^k- id)^{j+1} \sigma_1^m(f(x))=0.$$ 
It follows as before that $f(x)$ given by (\ref{2m2}) satisfies a $\sigma_1$-Mahler
equation. For the second equation concerning $\sigma_2$, we proceed analogously.
\end{prf}

We now turn to the proof of Theorem~\ref{thm2-1}. 
The consistency condition (\ref{eq2-2}) can be interpreted as follows. 
{ \begin{remark}\label{obs2sigma}\rm The gauge transformation
$Z=B_2 Y$ transforms 
\begin{equation}\label{b1}\sigma_1(Y)=B_1\,Y\end{equation} 
into the equivalent system $\sigma_1(Z)=\sigma_2(B_1)\,Z$.
Iterating this procedure, we find that the system (\ref{b1}) is equivalent to the systems 
\begin{equation}\label{b1N}\sigma_1(U)=\sigma_2^N(B_1)\,U\end{equation} 
for any positive integer $N$ by the gauge transformation $Z=G_NY$,
$G_N=\sigma_2^{N-1}(B_2)...\sigma_2(B_2)B_2$. Thus, if $W$ is a fundamental solution matrix of 
(\ref{b1}) then   both $\sigma_1^N(W)$ and $G_NW$ are fundamental solution matrices of (\ref{b1N}).
\end{remark}}

The rest of the proof of Theorem \ref{thm2-1} will be given separately for the three cases.

{\subsection{Proof of Theorem~\ref{thm2-1}: Case 2S}\label{thm2S}}

In a first step, we consider analytic continuation of solutions of (\ref{b1}) 
{under the hypotheses of the Theorem}.\\

\begin{lemma}\label{2s-app}
In  case 2S, consider a strip $S=\{x\in\CX|a_1<\Im x<a_2\}$, $-\infty\leq a_1<a_2\leq+\infty$.
If $g(x)$ is a holomorphic solution of (\ref{b1}) for $x\in S$ with sufficiently large 
positive real part {or with sufficiently large negative real part} then $g(x)$ can be 
continued analytically to a meromorphic function in $S$ with finitely many poles.
\end{lemma}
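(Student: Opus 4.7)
The plan is to extend $g$ meromorphically using the equation $\sigma_1(Y) = B_1 Y$ directly, then to use Remark~\ref{obs2sigma} to obtain a second, genuinely different upper bound on the pole locus, and finally to intersect the two and invoke $\alpha \notin \QX$ to deduce finiteness. Let $\Sigma_j \subset \CX$ denote the finite set of points where $B_j$ or $B_j^{-1}$ has a pole, $j=1,2$. I treat the case of large positive real part; the opposite case is symmetric, using $g(x+1) = B_1(x)\,g(x)$ to extend to the right.

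First, the relation $g(x) = B_1(x)^{-1} g(x+1)$ extends $g$ step by step to the left of the half-strip where it is known to be holomorphic, yielding a meromorphic extension to all of $S$ whose poles lie in $(\Sigma_1 - \NX) \cap S$. This is a priori infinite, accumulating at $\Re x \to -\infty$. To cut it down, for each integer $N \geq 1$ I would consider $\tilde g_N := G_N g$, which by Remark~\ref{obs2sigma} satisfies the shifted equation $\tilde g_N(x+1) = B_1(x+N\alpha)\,\tilde g_N(x)$. Extending $\tilde g_N$ meromorphically in the same way and recovering $g = G_N^{-1} \tilde g_N$ yields the alternative bound
\[
(\text{poles of } g) \cap S \,\subset\, \Big[(\Sigma_1 - N\alpha - \NX) \,\cup\, \bigcup_{k=0}^{N-1}(\Sigma_2 - k\alpha)\Big] \cap S,
\]
since the singularities of $G_N^{-1}$ are contained in $\bigcup_{k=0}^{N-1}(\Sigma_2 - k\alpha)$.

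The core of the argument is then a short Diophantine observation using both bounds simultaneously. Fix a pole $p$ of $g$; by the first bound, $p = \sigma - m$ for some $\sigma \in \Sigma_1$ and $m \in \NX$. For each $N$ the second bound gives either $p \in \Sigma_1 - N\alpha - \NX$, or $p = \tau - k\alpha$ with $\tau \in \Sigma_2$ and $k \in \NX$, $k < N$. Since $\alpha \notin \QX$, the map $(N,k') \mapsto N\alpha + k'$ from $\NX \times \NX$ to $\CX$ is injective, so $\Sigma_1$ being finite, the first alternative can hold for at most finitely many $N$. Hence for all sufficiently large $N$ the second alternative holds, giving $p = \tau - k\alpha$ for some $(\tau,k) \in \Sigma_2 \times \NX$. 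Combining the two expressions for $p$ produces
\[
m - k\alpha \;=\; \sigma - \tau,
\]
and the same injectivity, now applied to $(m,k) \mapsto m - k\alpha$, shows that each pair $(\sigma,\tau) \in \Sigma_1 \times \Sigma_2$ admits at most one solution $(m,k) \in \NX^2$. Consequently $p$ ranges over a finite set.

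I expect the main obstacle to be purely organisational rather than conceptual, namely keeping track of the sign conventions and of the direction of continuation (and checking that the singular locus of $G_N^{-1}$ really is contained in $\bigcup_{k<N}(\Sigma_2 - k\alpha)$, which follows from the product formula for $G_N$ in Remark~\ref{obs2sigma}); once $\alpha \notin \QX$ is in hand, the Diophantine finiteness step itself is immediate.
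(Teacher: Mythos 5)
Your proof is correct and follows essentially the same route as the paper: both extend $g$ leftward via the difference equation to get the pole bound $\Sigma_1-\NX$, then use Remark~\ref{obs2sigma} and $G_N$ to get a second bound, and finally exploit the irrationality of $\alpha$ to intersect the two. The only (harmless) difference is the last step: the paper fixes a single $N$ with $d-c\not\equiv N\alpha \pmod{\ZX}$ for all $c,d\in\Sigma_1$, so that $(\Sigma_1-\NX-N\alpha)\cap(\Sigma_1-\NX)=\emptyset$ and the poles are trapped in the finite singular set of $G_N^{-1}$, whereas you let $N$ vary and pin each pole down via the Diophantine equation $m-k\alpha=\sigma-\tau$ — both conclusions are valid.
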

\begin{prf} Let $\calM$ denote the set of $x_1\in\CX$ such that $B_1(x)$ or $B_1(x)^{-1}$
has a pole at $x=x_1$.
Consider a solution $g$ holomorphic for $x\in S$ with large {positive} real part.
By the difference equation (\ref{b1}), it can be continued analytically to $S$ except
for possible poles in $(\calM-\NX)\cap S$. { By Remark \ref{obs2sigma},} for $N\in\NX$, 
the function $g_N=G_Ng$ is a solution
of $\sigma_1(U)=\sigma_2^{N}(B_1)U$ holomorphic for $x\in S$ with large real part.
By its difference equation, it can be continued analytically to $S$ except for possible
poles in $(\sigma_2^{-N}(\calM)-\NX)\cap S=(\calM-\NX-N\alpha)\cap S$, because $x_1$ is a pole of 
$\sigma_2^N(B_1^{\pm1})$ if and only if
$\sigma_2^N(x_1)$ is a pole of $B_1^{\pm1}$. This implies that $g=G_N^{-1}g_N$ 
can also be analytically continued
to $S$ except possible poles in $(\sigma_2^{-N}(\calM)-\NX)\cap S$ and in $\calN_N\cap S$, where
$\calN_N$ is the (finite) set of poles of  $G_N^{-1}$. 
Therefore $g$ can be {continued} analytically to $S$ with the exception
of $(\calM-\NX-N\alpha)\cap(\calM-\NX)\cap S$ and $\calN_N\cap S$.

We claim that the former intersection is empty for appropriate $N$.
The set $\{d-c+\ZX\mid c,d\in\calM\}$ is finite since $\calM$ is finite. As $\alpha$ is irrational,
the set $\{N\alpha+\ZX\mid N\in\NX\}$ is infinite. Hence {we can select} $N\in\NX$ such that
for all $c,d\in\calM$, the difference $d-c\not\equiv N\alpha\mod\ZX$.
If the first intersection is nonempty, there exists $h\in S$ such that
$h=d-N\alpha-m$ and $h=c-n$, where $c,d\in\calM$ and $m,n\in\NX$. This implies
that $d-c-N\alpha=m-n\in\ZX$, a contradiction.    Thus the intersection is indeed empty.
This means that $g$ can be continued analytically to $S\setminus\calN_N$, where
$\calN_N$ is finite.

The proof in the case of $g$ analytic for $x\in S$ with large negative real part is
analogous.~\end{prf}

Concerning the behavior at infinity, it is known that $\sigma_1(Y)=B_1\,Y$ has a formal fundamental
solution 
\begin{equation}\label{ffs-dz}Y(x)=\Phi(x)x^L\,e^{Q(x)}x^{Dx},
\end{equation}
where $\Phi(x)$ is a formal power series in $x^{-1/r}$ for some integer $r$, $L$ is a constant matrix with eigenvalues $\gamma_j$
satisfying $0\leq\Re(\gamma_j)<\frac1r$, $Q(x) = \sum_{j=1}^h Q_jx^{r_j}$ where the $Q_j$ are  diagonal matrices with entries in $C$ and the $r_j$ are positive rational numbers 
with $1=r_h > r_{h-1} > \ldots >r_1 >0$ (or $Q(x)\equiv 0$) and
$D$ is diagonal with entries in $\frac1r\ZX$; furthermore $L$, $Q(x)$ and $D$ commute ({\it c.f.,} \cite{Im84}, chapter I, and \cite{PuSi}, section 6.1).
The leading term $Q_h$ of $Q(x)$ is chosen such that the imaginary parts of its
entries are between 0 and $2\pi$\footnote{Recall that solutions of
$\sigma_1$-difference equations remain solutions when multiplied by
1-periodic functions.}, $2\pi$ excluded.
We can write $D=\diag(d_\ell I_\ell,\ell=1,...,m)$ and  
$Q(x)=\diag(q_\ell(x)I_\ell,\ell=1,...,m) $ with identity matrices of appropriate size {$n_\ell \times n_\ell$} and distinct couples 
$(d_\ell,q_\ell(x))$, $\ell=1,...,m$. Then also $L=\diag(L_1,...,L_m)$ with diagonal
blocks of corresponding size. The formal fundamental solution is essentially unique, \ie\ 
except for a permutation of the diagonal blocks and passage from some $L_\ell$ to a conjugate matrix.

{ By Remark \ref{obs2sigma}, both} $B_2(x)Y(x)$ and 
$\sigma_2(Y(x))=Y(x+\alpha)$ are formal fundamental solutions of 
$\sigma_1(Z)=\sigma_2(B_1)\,Z$. Re-expanding we find that
$Y(x+\alpha)=\tilde\Phi(x)x^{\tilde L}e^{Q(x)}x^{Dx},$
where $\tilde L\equiv L+\alpha D\mod\frac1r\ZX$. {Also writing} 
$\tilde L=\diag(\tilde L_\ell,\ell=1,...,m)$ we obtain
from the essential uniqueness of the formal fundamental solution
that for each $\ell$, the matrices $\tilde L_\ell\equiv L_\ell+\alpha d_\ell\mod\frac1r\ZX$ 
and $L_\ell$ are conjugate.\footnote{
Observe that there can be no permutaion of diagonal blocks because the couples $(d_\ell,q_\ell(x))$, $\ell=1,...,m$
are distinct.} Now if $a_k+\frac1r\ZX,\,k=1,...,r_\ell$ are the equivalence
classes of the eigenvalues of $L_\ell$ modulo $\frac1r\ZX$,
then $a_k+\alpha d_\ell+\frac1r\ZX$ are those of $\tilde L_\ell$. Hence
the mapping $x+\frac1r\ZX\mapsto x+\alpha d_\ell+\frac1r\ZX$ induces a permutation of the
equivalence classes of the eigenvalues of $L_\ell$. Applying it several  {times}, if necessary, to some
eigenvalue $a_k$, we obtain the existence of some positive integer $N$ such that
$a_k+\frac1r\ZX=a_k+N\alpha d_\ell+\frac1r\ZX$. Since $\alpha$ is not rational, but $d_\ell$ is,
this is impossible unless $d_\ell=0$ and we obtain that $D=0$. The difference equation
(\ref{b1}) and by symmetry also $\sigma_2{Y}=B_2 Y$ are hence {\it mild}
in the sense of \cite{PuSi}, section 7.1.

Next we show the statement analogous to Proposition \ref{prop1}.
\begin{lemma}\label{prop2-1} In the present context, there exists a diagonal matrix  $\tilde{B}_1$ with constant 
entries and a gauge transformation $Z = F\,Y$, $F\in\GL_n(k)$, such that $Z$ satisfies 
$\sigma_1(Z) = \tilde{B}_1Z$.\end{lemma}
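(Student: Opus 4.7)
The plan is to adapt the argument of Proposition~\ref{prop1} to the shift difference equation $\sigma_1(Y)=B_1 Y$, exploiting the preparatory results just obtained. We already have the formal fundamental solution $Y(x)=\Phi(x)x^L e^{Q(x)}$ at infinity with $D=0$ (the mild case), and Lemma~\ref{2s-app}, applied with $S=\CX$, tells us that any local analytic solution of $\sigma_1(Y)=B_1 Y$ continues meromorphically to the whole of $\CX$ with only finitely many poles.

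First I would invoke the multisummation theory for mild difference equations (the difference-equation analog of \cite{Ju78}, see for instance the work of Braaksma or \cite{PuSi}) to produce, for almost every direction $\theta$, genuine analytic solutions $Y^S(x)$ and $Y^T(x)$ on sectors $S$ and $T$ of opening larger than $\pi$ bisected by $\theta$ and $\theta+\pi$, each asymptotic to the formal fundamental solution on its sector. Then I would show that $Q(x)=\Lambda x$ with $\Lambda$ a constant diagonal matrix, \ie\ no fractional powers and no sub-linear monomials may occur. The argument parallels the derivation that $D=0$ carried out just above: by Remark~\ref{obs2sigma}, both $Y(x+\alpha)$ and $B_2(x)Y(x)$ are formal fundamental solutions of $\sigma_1(Z)=\sigma_2(B_1)Z$, so essential uniqueness forces the map $q_i(x)\mapsto q_i(x+\alpha)$ to permute the diagonal exponents of $Q$ modulo strictly lower order terms; iterating this permutation and exploiting the irrationality of $\alpha$ eliminates the sub-linear terms in each $q_i$.

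With $Q(x)=\Lambda x$ in hand, I would transcribe the Stokes matrix analysis of Proposition~\ref{prop1}: choose $\psi=\theta+\pi/2$ so that the eigenvalues of $\Lambda$ are ordered by $\Re(\lambda_j x)<\Re(\lambda_k x)$ for $j<k$ along $\arg x=\psi$, deduce that the Stokes matrix $C^+$ is block-upper-triangular with identity blocks on the diagonal and that $C^-$ is block-lower-triangular with identity blocks, and use the global meromorphicity furnished by Lemma~\ref{2s-app} to conclude that $Y^S=Y^T$ as meromorphic functions on $\CX$, whence $C^+=C^-=I$. Then $F(x)=Y^S(x)e^{-\Lambda x}$ is single-valued meromorphic on $\CX$ with only finitely many poles and at most polynomial growth at infinity, hence $F\in\GL_n(C(x))$; the transformation $Z=F(x)^{-1}Y$ yields $\sigma_1(Z)=\tilde B_1 Z$ with $\tilde B_1=e^{\Lambda}$ a constant diagonal matrix.

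The main obstacle I anticipate is the treatment of the $x^L$ factor still lurking in the asymptotic expansion of $F(x)$: single-valuedness on $\CX$ will force the eigenvalues of $L$ to be integers, after which $x^L$ contributes only a rational factor that can be absorbed into $F$. This is likely to require a preliminary constant or diagonal rational gauge reduction to bring $L$ into an appropriate normal form before the sectorial analysis, in the spirit of the handling of the formal monodromy already carried out in cases Q and M above.
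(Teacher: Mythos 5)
Your overall architecture (multisummation, sectorial solutions $Y^S,Y^T$, global meromorphic continuation via Lemma~\ref{2s-app}, and rationality of $F=Y^Se^{-\Lambda x}$) matches the paper, but two of your key steps do not go through as described.

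First, the formal permutation argument cannot eliminate the sub-linear monomials in $Q$. If $q_i(x)=\lambda_i x+cx^{r}+\cdots$ with $0<r<1$, then $q_i(x+\alpha)=q_i(x)+\lambda_i\alpha+{\cal O}(x^{r-1})$, \ie\ $q_i(x+\alpha)\equiv q_i(x)$ modulo constants and terms of negative order. Essential uniqueness of the formal fundamental solution therefore yields the \emph{identity} permutation, and iterating it produces no contradiction, however irrational $\alpha$ is. (The analogous argument does work for the factor $x^{Dx}$, because there the shift produces $x^{\alpha D}$ and moves the eigenvalue classes of $L$ by $\alpha d_\ell$; the sub-linear part of $Q$ is simply invisible to the shift.) The paper instead keeps the terms $x^{\alpha}$, $0<\alpha<1$, in play throughout and rules them out only at the end, by the analytic contradiction from the proof of Proposition~\ref{prop1}: closure of the set of $q_i$ under $x\mapsto xe^{2\pi i m}$, minimality of the index $j$, vanishing of the relevant off-diagonal connection blocks, and the incompatible asymptotics of $Y^S_j$ and $Y^T_j$ near $\arg x=\psi+\pi$.

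Second, you transcribe the Stokes analysis as if the connection matrices were constants, but for a difference equation $D(x)=Y^T(x)^{-1}Y^S(x)$ is a priori only $1$-periodic. This is the main new difficulty in case 2S, and the paper spends most of the proof on it: $D(x)$ is meromorphic with finitely many poles and $1$-periodic, hence entire; it has at most exponential growth as $|\Im x|\to\infty$, hence a finite Fourier expansion $\sum_{k=-k_0}^{k_0}D^{(k)}e^{2\pi i kx}$; and the nonzero modes are killed by growth comparisons along $\arg x=\psi$ and $\arg x=\psi+\pi$, which require the normalization $0\le\Im\lambda_i<2\pi$ of the leading term of $Q$ and the choice of $\theta$ close to $0$ so that inequality (\ref{diffs}) holds. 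Without this step you cannot conclude $D=I$, so $Y^S=Y^T$ and the rationality of $F$ do not follow. (Your worry about the $x^L$ factor, by contrast, is harmless: once $Y^S$ extends to a meromorphic function on $\CX$ with finitely many poles, $F=Y^Se^{-\Lambda x}$ is single-valued by construction, and the expansion $\Phi(x)x^L$ is used only to bound its growth at infinity; no integrality of the eigenvalues of $L$ is needed.)
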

\begin{prf}This is an adaptation of the proof of {Proposition}
\ref{prop1}. For $\theta\neq0$ sufficiently close to 0, there exist
\begin{enumerate}
\item sectors $S$ and $T$   of openings greater than $\pi$ bisected by $\theta$ and $\theta + \pi $, respectively,
\item a number $R>0$ and functions $\Phi_S(x)$ and $\Phi_T(x)$ analytic for $|x|>R$ in $S$ and $T$, respectively, such that
\begin{enumerate}
\item $\Phi_S(x)$ and $\Phi_T(x)$ are asymptotic to $\Phi(x)$ as $x \rightarrow \infty$ in their respective sectors,  and
\item $Y^S(x) =\Phi_S(x)x^L e^{Q(x)}$ and $Y^T(x) = \Phi_T(x)x^L e^{Q(x)}$ are solutions of $\sigma_1 Y = AY$.
\end{enumerate}
\end{enumerate}
This can be proved using multisummation 
(c.f., \cite{BrFa96}, \cite{PuSi}, section 9.1).

We write $Q(x) = \diag(q_1(x)I_1, \ldots , q_s(x)I_s)$ with distinct $q_j(x)$ and $I_j$  identity matrices of an appropriate size.  
We split
\[ q_i(x) = \lambda_i x + \mbox{ terms involving $x^\alpha$ with $0<\alpha<1$.}\]
As $0\leq \Im\lambda_i<2\pi$, we may assume that $\theta$ is so close to 0
that 
\begin{equation}\label{diffs}
\max_{j,k}(\Re(\lambda_jx)-\Re(\lambda_kx))< {-}\Re( 2\pi i x) \mbox{ if }\arg(x) = \psi=\theta+\pi/2.
\end{equation}
\noindent{Indeed, this is equivalent to ${\max_{j,k}}\,\Im\left((\lambda_j-\lambda_k)e^{i\theta}\right)<  2\pi \cos(\theta)$
which is true for sufficiently small~$|\theta|$.}

According to Lemma \ref{2s-app} both $Y^S(x)$ and $Y^T(x)$ can be continued analytically
to meromorphic functions on $\CX$ with finitely many poles. {The same is true for 
their inverses, because their transposed inverses also satisfy equations to which  Lemma \ref{2s-app} applies.} 
We call these extensions again $Y^S(x)$, $Y^T(x)$.

Let $D(x)$ be the connection matrix defined by $Y^S(x) = Y^T(x) D(x)$. It is
meromorphic on $\CX$ with finitely many poles and 1-periodic. Hence it is entire. Moreover,
as $Y^S(x)$, $Y^T(x)$ and  $(Y^T)^{-1}(x)$, it has at most exponential growth
as $|\Im x|\to\infty$ in the intersections of $S$ and $T$.
Hence the Fourier series of $D(x)$ has at most finitely many terms.
Let us write $D(x)=\sum_{k=-k_0}^{k_0}D^{(k)} e^{2\pi i k x}$. 
The blocks  of $D(x)$ corresponding to the subdivision of $Q(x)$ 
are denoted by $D_{i,j}(x)$, those of $D^{(k)}$ by $D^{(k)}_{i,j}$.
Write $Y^S=(Y^S_1|...|Y^S_s)$ and $Y^T=(Y^T_1|...|Y^T_s)$ in corresponding
block columns.

We claim that $D(x)$ is constant.
To show this, fix some $j$. Then 
$$Y^S_j(x)=\sum_{m=1}^s\sum_{k=-k_0}^{k_0} Y^T_m(x)e^{2\pi i k x} D^{(k)}_{m,j}.$$
Because of (\ref{diffs}) we obtain that
$D^{(k)}_{m,j}=0 \mbox{ if }  k<0. $ 
Otherwise the right hand side would grow more rapidly as $|x|\to\infty $,
$\arg(x)=\psi$ than the left hand side.
For the line $\arg(x)=\psi+\pi$,  the exponentials $e^{2\pi i k x}$ are ordered
inversely with respect to growth as $|x|\to\infty$.
Here we find that
$D^{(k)}_{m,j}=0\mbox{ if } k > 0.$
Altogether we obtain that $D(x)$ is constant.

The rest of the proof is identical to the one of Proposition \ref{prop1} {with $D(x)$ replacing
the matrix $C^+$ in the latter argument}.\end{prf}
We can now complete the proof of Theorem~\ref{thm2-1}{ in case 2S}. Apply the gauge transformation $Z=FY$ of Lemma
\ref{prop2-1} to $\sigma_2 Y= B_2 Y$ to yield $\sigma_2 Z= \tilde B_2 Z$,
$\tilde B_2=(b_{i,j})$.
Since $\tilde B_1=\diag(a_1,...,a_n)$ is constant diagonal, the consistency
condition (\ref{eq2-2}) implies that
$$\sigma_1(b_{i,j})=\frac{a_i}{a_j}b_{i,j}.$$
If $a_i\neq a_j$ we obtain that $b_{i,j}=0$ since the above equation has no
nonzero solution in $C(x)$ then. If $a_i=a_j$ we have $\sigma_1(b_{i,j})=b_{i,j}$
and hence $b_{i,j}$ is a constant. Therefore $\tilde B_2$ is also constant.
This completes the proof in  case 2S.\vspace{3mm}

{\subsection{Proof of Theorem~\ref{thm2-1}: Case 2Q}\label{thm2Q}}
In  case 2Q, we omit the index 1 of $q_1$ and assume $|q|>1$ for simplicity. Fix a logarithm $2\pi i\tau$
of $q$. Concerning analytic continuation we prove here\\

\begin{lemma}\label{2q-app}
In  case 2Q, consider a spiraling strip $S=\{e^{2\pi i \tau t}\in\hat\CX|a_1<\Im t<a_2\}$, $-\infty\leq a_1<a_2\leq+\infty$.
If $g(x)$ is a holomorphic solution of (\ref{b1}) for $x\in S$ with sufficiently large 
modulus {or  with sufficiently small modulus} then $g(x)$ can be 
continued analytically to a meromorphic function in $S$ such that the projections of its poles to $\CX^*$
form a finite set.
\end{lemma}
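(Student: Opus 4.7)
The plan is to follow the strategy of Lemma \ref{2s-app} after flattening the spiraling strip to a horizontal one via the biholomorphism $t \mapsto q^t$, where $q = q_1$. Set $\alpha = \log(q_2)/\log(q_1)$ and $2\pi i \tau = \log q$. The map $t \mapsto q^t = e^{2\pi i\tau t}$ is a biholomorphism $\CX \to \hat{\CX}$ carrying the horizontal strip $\tilde S = \{t \in \CX : a_1 < \Im t < a_2\}$ onto $S$, the operator $\sigma_1$ onto the translation $t \mapsto t+1$, and $\sigma_2$ onto $t \mapsto t + \alpha$. Since $|q|>1$, large modulus of $x$ corresponds to $\Re t \to +\infty$ within $\tilde S$. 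Writing $h(t) = g(q^t)$ and $\tilde B_1(t) = B_1(q^t)$, the difference equation (\ref{b1}) becomes $h(t+1) = \tilde B_1(t) h(t)$.

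Let $\calM \subset \CX^*$ be the (finite) pole set of $B_1^{\pm 1}$; the poles of $\tilde B_1^{\pm 1}$ then form the periodic set $\calM_t + \Lambda$, where $\calM_t$ is a finite set of $t$-representatives and $\Lambda = (2\pi i/\log q)\,\ZX$. Iterating $h(t) = \tilde B_1(t)^{-1} h(t+1)$ extends $h$ from $\Re t \gg 0$ to all of $\tilde S$ with possible poles only in $\calM_t - \NX + \Lambda$. Next, by Remark \ref{obs2sigma}, for each $N \in \NX^*$ there exists $G_N \in \GL_n(k)$ such that $\tilde h_N := G_N h$ satisfies the $\sigma_2^N$-shifted equation, which in $t$-coordinates reads $\tilde h_N(t+1) = \tilde B_1(t + N\alpha)\tilde h_N(t)$. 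The same iteration yields possible poles of $\tilde h_N$ in $\calM_t - N\alpha - \NX + \Lambda$, and hence of $h = G_N^{-1}\tilde h_N$ in the intersection $(\calM_t - \NX + \Lambda) \cap (\calM_t - N\alpha - \NX + \Lambda)$ together with the pole set of $G_N^{-1}$, which is finite modulo $\Lambda$.

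The crucial step is to choose $N$ so that this intersection is empty. A nonempty intersection would require $N\alpha \equiv c' - c \pmod{\Gamma}$ for some $c, c' \in \calM_t$, where $\Gamma := \ZX + \Lambda$ is a genuine lattice in $\CX$ (since $|q|\neq 1$ makes $1$ and $2\pi i/\log q$ $\RX$-linearly independent). I claim that $\alpha$ is non-torsion in the complex torus $\CX/\Gamma$: an identity $m\alpha = p + 2\pi i r/\log q$ with integers $m \neq 0$, $p$, $r$ rearranges to $m\log q_2 = p\log q_1 + 2\pi i r$, hence $q_2^m = q_1^p$, contradicting either the multiplicative independence of $q_1, q_2$ (if $p \neq 0$) or the assumption that $q_2$ is not a root of unity (if $p = 0$). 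Therefore $\{N\alpha \bmod \Gamma\}_{N \geq 1}$ is infinite, and cofinitely many $N$ avoid the finite forbidden set $\{c' - c \bmod \Gamma : c, c' \in \calM_t\}$. For such an $N$, the remaining poles of $h$ lie in a $\Lambda$-periodic finite set, so their projections to $\CX^*$ form a finite set, which is the desired conclusion. The case of $g$ holomorphic for small $|x|$ is handled symmetrically by continuing in the direction $\Re t \to -\infty$. The main obstacle beyond the routine $x\leftrightarrow t$ translation of Lemma \ref{2s-app} is precisely this torus argument, which replaces the one-dimensional irrationality of $\alpha$ used in the shift case by a non-torsion statement in the two-dimensional quotient $\CX/\Gamma$.
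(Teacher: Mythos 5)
Your proof is correct and takes essentially the same route as the paper, whose own proof merely asserts that the argument is analogous to Lemma~\ref{2s-app} --- the possible pole sets being $(\calM\cdot q^{-\NX})\cap S$ and $(\calM\cdot q_2^{-N}\cdot q^{-\NX})\cap S$ with empty intersection for convenient $N$ --- and leaves the details to the reader. Your logarithmic change of variables and the non-torsion argument in $\CX/(\ZX+\Lambda)$, which uses multiplicative independence when $p\neq0$ and the hypothesis that $q_2$ is not a root of unity when $p=0$ (with $|q_1|\neq1$ ensuring the continuation toward large $|x|$ actually proceeds), correctly supply exactly the details the paper omits.
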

\begin{prf} The proof is analogous to that of Lemma \ref{2s-app}. {The only  sets of possible poles are now
$(\calM\cdot q^{-\NX})\cap S$ or   $(\calM\cdot q_2^{-N}\cdot q^{-\NX})\cap S$, respectively, } and their intersection
is empty for convenient $N$ because of the condition imposed on $q$ and $q_2$ in  case 2Q. We leave it to the reader
to fill in the details.
\end{prf}
Concerning the behavior at $0$ (and similarly at $\infty$), it is known that there exists a formal gauge transformation
$Z=G\,Y$, $G\in\GL_n(\CX[[x^{1/s}]][x^{-1/s}])$, {$s\in\NX^*$,} that reduces (\ref{b1})
to a system $\sigma(Z)=x^DA_0Z$, where $D$ is a diagonal matrix with entries in $\frac1s\ZX$ and $A_0\in\GL_n(\CX)$ 
such that any eigenvalue $\lambda$ of $A_0$ satisfies $1\leq|\lambda|<|q|^{1/s}$, moreover $D$ and $A_0$ commute.
If we write $D=\diag(d_1I_1,...,d_rI_r)$ with distinct $d_j$ and $I_j$ identity matrices of an appropriate size,
then $A_0=\diag(A_0^{1},...,A_0^r)$ with diagonal blocks $A_0^{j}$ of corresponding size. 
$D$ and $A_0$ are essentially unique, \ie\ except for a permutation of the diagonal blocks and passage from
some $A_0^j$ to a conjugate matrix. If $D$ happens to be 0, then $s$ can
be chosen to be 1 and $G$ is convergent
(see \cite{PuSi}, ch.\ 12, \cite{Ad31}, \cite{Car12}).

{ According to Remark \ref{obs2sigma}}, our system (\ref{b1}) is equivalent to
$\sigma(U)=\sigma_2(B_1)\,U$. The gauge transformation $V=\sigma_2(G)U$ now transforms this system to
$\sigma(V)=\sigma_2(x^DA_0)V$. Now $\sigma_2(x^DA_0)=x^{D}\,q_2^{D}A_0$ and there is a diagonal matrix $F$
with entries in $\frac1s\ZX$ commuting with $D$ and $A_0$ such that the gauge transformation $W=x^{F}V$ reduces
the latter system to $\sigma(V)=x^D\tilde A_0V$, where $\tilde A_0=q^{-F}q_2^DA_0$ has again eigenvalues
with modulus in $[1,|q|^{1/s}[$. Now we write $\tilde A_0=\diag(\tilde A_0^{1},...,\tilde A_0^r)$ and 
fix some $j\in\{1,...,r\}$. If $a^{j}_1,...,a^{j}_{r_j}$ are the eigenvalues of $A_0^j$ then
$q^{-f_j}q_2^{d_j}a^j_\ell$, $\ell=1,...,r_j$, are those of $\tilde A_0^j$. By the uniqueness of the reduced form, 
the mapping $x\mapsto q^{-f_j}q_2^{d_j}x$ induces a permutation of the eigenvalues of $A_0^j$. 
If we apply it several times, if necessary, we obtain the existence of some $\ell\in\{1,...,r_j\}$ and of some positive integer
$k$ such that $q^{-kf_j}q_2^{kd_j}a^j_\ell=a_\ell^j$. Due to our condition on $q$ and $q_2$ this is only possible
if $d_j=0$. Thus we have proved that $D=0$.

Hence $0$ and $\infty$ are regular singular points of (\ref{b1}). There is a gauge transformation
$Z=G_0(x) Y$, $G_0(x)\in\GL_n(\CX\{x\}[x^{-1}])$ reducing the system to $\sigma(Z)=A_0Z$ and a gauge transformation
$V=G_\infty(x) Y$, $G_\infty(x)\in\GL_n(\CX\{x^{-1}\}[x])$ reducing the system to $\sigma(V)=A_\infty V$,  where
$A_0,\,A_\infty$ are constant invertible matrices with eigenvalues in the annulus 
$1\leq|\lambda|<|q|[$.

Now we fix a matrix $L_0$ such that $A_0=q^{L_0}(=\exp(2\pi i \tau L_0)))$ and thus $Y_0(x)=G_0(x)^{-1}x^{L_0}$ is a solution
of (\ref{b1}) in some neighborhood of 0 in $\hat\CX$. 
By Lemma \ref{2q-app}, this solution can be continued analytically to a meromorphic function on $\hat\CX$
such that the projections of its poles to $\CX^*$ form a finite set. This implies that $G_0(x)^{-1}$ 
can be continued to a meromorphic function on $\CX$ with finitely many poles. We use the same name for this
extension. In some annulus $K<|x|<\infty$, $K$ sufficiently large, it can be expanded in a convergent Laurent
series 
$$G_0(x)^{-1}=\sum_{m=-\infty}^{\infty}G_m x^m$$
with some constant matrices $G_m$ and therefore also
$$G_\infty(x)G_0(x)^{-1}=\sum_{m=-\infty}^{\infty}P_m x^m$$
with constant matrices $P_m$. 

Now by construction, $G_\infty(x)G_0(x)^{-1}x^{L_0}$ is a solution of the 
equation $\sigma(V)=A_\infty\,V$. Using $q^{L_0}=A_0$, this implies the equations
$$P_m\,q^mA_0=A_\infty P_m,\ m\in\ZX.$$
Now if $m\neq0$, then  $q^mA_0$ and $A_\infty$ cannot have a common eigenvalue as those of  $A_0$,  $A_\infty$ have
a modulus in $[1,|q|[$. Hence $P_m=0$ for $m\neq0$ and there is a constant {invertible}
matrix $P_0$ such that $G_\infty(x)=P_0\,G_0(x)$ for $|x|>K$.  In particular, the above Laurent series
of $G_0(x)^{-1}$ has only finitely many terms corresponding to positive powers of $x$ since this is the case for 
$G_\infty(x)^{-1}$. This means that $\infty$ is only a pole of the $G_0(x)^{-1}$, a meromorphic function
on $\CX$ with finitely many poles. We obtain that $G_0(x)^{-1}$ and hence also $G_0(x)$ have entries that are rational 
functions. 

Thus we have shown that (\ref{b1}) is equivalent over $\CX(x)$ to $\sigma(Z)=A_0 Z$ with some constant invertible 
matrix $A_0$. {To complete the proof of Theorem~\ref{thm2-1} for case 2Q,} apply the same gauge transformation to $\sigma_2(Y)=B_2(x)Y$ to obtain
$\sigma_2(Z)=\tilde B_2(x)Z$. The consistency condition implies here that
$$\sigma_1(\tilde B_2(x))A_0=A_0\tilde B_2(x).$$ 
Expanding $\tilde B_2(x)$ in a convergent Laurent series in a punctured disk centered at the origin
$$\tilde B_2(x)=\sum_{m=-M}^\infty C_m x^m$$
we find that the coefficients satisfy the equations $C_m\,q^mA_0=A_0C_m$, $m=-M,...$. Again $q^mA_0$ and $A_0$ have no common eigenvalue
and hence $C_m=0$ if $m\neq0$. Thus $\tilde B_2(x)$ is also a constant and the theorem is proved in  case 2Q as well.

{
\begin{remark}\label{Bez-ext} {\rm Following ideas of B\'ezivin ( \cite[Page 90]{Ramis92}) we can establish Theorem \ref{thm2-1}, case 2Q under the assumptions that $|q_1| = |q_2| = 1$ , $q_1,q_2$ multiplicatively independent, $q_2$ not a root of unity, $q_1$ transcendental over $\QX$
or  $q_1$ algebraic over $\QX$ such that its minimal polynomial has a root in $\CX$ of 
absolute value not equal to 1. For the proof, let $F\subset \CX$ be the field generated by $q_1$ and $q_2$ and the coefficients of the entries of $B_1$ and $B_2$.  This is a finitely generated extension of $\QX$. Hence by the assumption on $q_1$, there is an embedding $\psi: F \rightarrow \CX$ such that such that $|\psi(q_1)| \neq 1$.  We can extend $\psi$ to an automorphism of $\CX$ which we denote again by $\psi$.  After applying $\psi$ to the system (\ref{eq2-1}), we get a new system that satisfies the hypotheses of Theorem 12.  Therefore there exists a gauge transformation $Z=GY$ transforming this new system to  a system with constant invertible commuting matrices $\tilde{B_1}, \tilde{B}_2$.  One sees that 
the system with matrices $\psi^{-1}(\tilde{B_1}), \psi^{-1}(\tilde{B}_2)$ satisfies the conclusion of Theorem \ref{thm2-1} with respect to our original system.}\end{remark}
}

{\subsection{Proof of Theorem~\ref{thm2-1}: Case 2M}\label{thm2M}}

In  case 2M, it is more convenient to use different notation. We consider\\

\begin{equation}\label{2msys}y(x^p)=A(x)y(x),\ \ \ y(x^q)=B(x)y(x)\end{equation}
with multiplicatively independent positive integers $p$ and $q$ and
$A(x),B(x)\in\GL_n(C(x))$ satisfying the consistency condition
\begin{equation}\label{2mcons}A(x^q)B(x)=B(x^p)A(x).\end{equation}
We want to show that there exist $G(x)\in \GL_n(K)$ and commuting $A_0,B_0\in\GL_n(C)$ such that
the gauge transformation $y=G(x)z$ reduces (\ref{2msys}) to 
\begin{equation}\label{2msysz}z(x^p)=A_0z(x),\ \ \ z(x^q)=B_0z(x).\end{equation}
The proof is more involved than in  cases 2S and 2Q because there are  several fixed
points of the mapping $x\mapsto x^p$, namely $0,\infty$ { and the $(p-1)$-th roots of unity,}
because solutions
holomorphic in some neighborhood of the origin { or $\infty$ can only be extended
using (\ref{2msys}) to the unit disk or the annulus $|x| >1$, respectively,} and because the behavior
of the solutions of Mahler systems near 0 and $\infty$ is not well understood. The consistency
condition (\ref{2mcons}) is crucial and will be used many times.

The plan is as follows. In a first step we prove 
 that any  formal vectorial solution of a system  (\ref{2msys}) satisfying  (\ref{2mcons}) is 
rational and deduce the statement of the theorem  under the additional hypothesis that
$x=0$ is a {\it regular singular point} of $y(x^p)=A(x)y(x)$, \ie\ there is a formal series
$G_0(x)\in \GL_n(\hat K),\ \hat K=\bigcup_{r\in\NX^*}C[[x^{1/r}]][x^{-1/r}]$, such that the gauge transformation
$y=G_0(x)z$ reduces the equation to one with a  constant coefficient matrix. 
In a second step, we prove
that $x=0$ is always regular singular for a consistent system (\ref{2msys}), (\ref{2mcons})
thus completing the proof.\\

\begin{prop}\label{2mprop}
In  case 2M,  consider the system  (\ref{2msys}) satisfying the consistency condition (\ref{2mcons})
and suppose that $g(x)\in (C[[x]][x^{-1}])^n$ is a formal vectorial solution. Then
$g(x)\in C(x)^n$.\end{prop}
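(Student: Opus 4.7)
I would prove Proposition \ref{2mprop} in four main steps.

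\textbf{(1) Setup and reduction.} Following the proof of Theorem \ref{thm1}, a countability argument reduces to the case $C\subset\CX$, making analytic methods available. Via the construction in Lemma \ref{LS}, I then replace $g$ by a vector $w$ whose components form a $C(x)$-basis of the $C(x)$-span of the components of $g$ in $C[[x]][x^{-1}]$. This $w$ satisfies a smaller system of the form (\ref{2msys}) still satisfying the consistency condition (\ref{2mcons}), and each component of the original $g$ lies in the $C(x)$-span of the components of $w$; so it suffices to prove $w\in C(x)^t$. Relabeling, I assume the components of $g$ are $C(x)$-linearly independent.

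\textbf{(2) Convergence and meromorphic continuation on the unit disk.} The Mahler equation $g(x^p)=A(x)g(x)$ yields linear recursions on the Laurent coefficients $g_m$: a forward recursion of order $\le\deg A$ for indices $j$ not divisible by $p$ (obtained by inverting the leading matrix), together with a ``compression'' relation $g_m=\sum_i A_i g_{pm-i}$ for $j=pm$; the $q$-equation gives analogous identities. Combined and using multiplicative independence of $p$ and $q$, these should give a polynomial bound $|g_m|=O(m^C)$, and hence convergence of $g$ on some positive disk. Iterating $g(x)=A(x)^{-1}g(x^p)$ then extends $g$ meromorphically to the open unit disk $\DX=\{|x|<1\}$ (and similarly via $B$ and $q$).

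\textbf{(3) Finiteness of the pole set.} Let $P$ denote the pole set of $g$ in $\DX$. Both functional equations make $P$ invariant under $x\mapsto x^p$ and $x\mapsto x^q$ modulo the finite singular sets $\mathcal{M}_A,\mathcal{M}_B$ of $A^{\pm 1},B^{\pm 1}$. Any pole $x_0\in P$ with $0<|x_0|<1$ not absorbed by $\mathcal{M}_A\cup\mathcal{M}_B$ would yield an infinite orbit $\{x_0^{p^aq^b}\}\subset P$ accumulating at $0$, contradicting the fact that $g\in(C[[x]][x^{-1}])^n$ has only a pole of finite order at $0$. Multiplicative independence of $p$ and $q$ rules out absorption of infinitely many such orbits by the finite defect sets $\mathcal{M}_A,\mathcal{M}_B$, so $P$ is finite.

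\textbf{(4) Rationality.} To conclude $g\in C(x)^n$ from $g$ being meromorphic on $\DX$ with only finitely many poles, I decompose $g=r+h$ where $r\in C(x)^n$ absorbs the finite poles of $g$ in $\DX$ (matching the principal part at $0$ and at every point of $P$), leaving $h$ holomorphic on $\DX$. Subtracting the Mahler equations for $r$ from those for $g$ yields inhomogeneous equations for $h$, which can be rewritten as a homogeneous consistent system of slightly larger dimension. Iterating the entire argument on this new system, the holomorphic part $h$ must vanish, so $g=r$ is rational. The main obstacle is precisely this last step together with Step 2: each Mahler equation individually constrains $g$ only inside the unit disk (since $|x^p|,|x^q|<1$ whenever $|x|<1$), so the multiplicative independence of $p$ and $q$ must be used substantively to force $h\equiv 0$. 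A variant approach, which may be cleaner, is to extend $g$ past the unit disk by bootstrapping both functional equations on a suitable covering space of $\CX^*$, obtaining a meromorphic function on $\CX^*$ with finitely many poles and polynomial growth at $\infty$, hence rational.
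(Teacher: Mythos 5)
There is a genuine gap, and it sits exactly where you flag the ``main obstacle.'' Your Steps 2--3 are broadly in the right spirit: the paper also first proves convergence of the formal solution and meromorphic continuation to the unit disk, and finiteness of the pole set. (Two caveats: the paper gets convergence from a Banach fixed-point argument applied to $\tilde g(x)=A(x)^{-1}\tilde g(x^p)-r(x)$ rather than from coefficient recursions, and your ``forward recursion obtained by inverting the leading matrix'' is not available in general -- $A\in\GL_n(C(x))$ does not make the lowest-order coefficient matrix of $A$ invertible. Also, no polynomial bound on the Laurent coefficients is needed or claimed; convergence on a small disk suffices.) But Step 4 is where the theorem actually lives, and neither of your two mechanisms works as stated. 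The first (split $g=r+h$ with $h$ holomorphic on $\DX$ and ``iterate the entire argument'') is circular: a holomorphic vector on $\DX$ satisfying a consistent pair of Mahler systems is precisely the object whose rationality is at issue, nothing decreases under the iteration, and a \emph{single} Mahler equation certainly admits holomorphic solutions on $\DX$ with the unit circle as natural boundary (e.g.\ $\sum_n x^{p^n}$), so a substantive input from the second equation is required exactly at this point. The second (``bootstrap both functional equations on a covering space to continue $g$ past the unit circle'') names the correct goal but supplies no mechanism for crossing the circle, which is the whole difficulty: for $|x|<1$ both $x^p$ and $x^q$ stay inside the disk, so the functional equations alone never leave it.

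The paper's device is to substitute $x=e^t$, turning the pair of Mahler equations into a consistent pair $u(pt)=\bar A(t)u(t)$, $u(qt)=\bar B(t)u(t)$ whose coefficients are analytic at the common fixed point $t=0$ (i.e.\ at $x=1$, \emph{on} the unit circle). The local reduction theory from case 2Q at $t=0$ yields $G_1(t)\in\GL_n(\CX\{t\}[t^{-1}])$ conjugating the pair to constant commuting matrices $A_1,B_1$; then $F(t)=t^{-L_1}G_1(t)^{-1}g(e^t)$, with $p^{L_1}=A_1$, satisfies $F(pt)=F(t)$ and $F(qt)=\tilde B_1F(t)$ on a sector, so $F(e^s)$ is $\log(p)$-periodic and has a Fourier expansion whose coefficients must satisfy $F_\ell\exp\bigl(2\pi i\ell\log(q)/\log(p)\bigr)=\tilde B_1F_\ell$. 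Multiplicative independence of $p$ and $q$ makes $\log(q)/\log(p)$ irrational, forcing all but finitely many $F_\ell$ to vanish; hence $F$, and with it $g(e^t)$, continues to a meromorphic function on all of $\CX$, with poles confined to a vertical strip (finitely many poles for $g$) and polynomial growth at infinity giving rationality. This passage through the fixed point $x=1$, together with the Fourier/irrationality argument, is the essential content of the proof and is absent from your outline.
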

\begin{cor}\label{2mcor}
In  case 2M,  consider the system  (\ref{2msys}) satisfying the consistency condition (\ref{2mcons})
and suppose that
the point $x=0$ for the first equation of (\ref{2msys}) is regular singular.
Then (\ref{2msys}) is equivalent over $K=C(\{x^{1/s}\mid s\in\NX^*\})$  to a system (\ref{2msysz})
with constant invertible commuting $A_0$ and $B_0$.
\end{cor}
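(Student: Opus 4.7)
The plan is to use the regular-singular hypothesis to reduce the first equation to constant-coefficient form, observe via consistency that the second transformed equation must also be constant, and finally promote the resulting \emph{formal} gauge transformation to an element of $\GL_n(K)$ by a vectorization trick combined with Proposition~\ref{2mprop}.

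First, by the regular-singular assumption there exists $G_0 \in \GL_n(\hat K)$ such that $y = G_0 z$ reduces $y(x^p) = A(x) y(x)$ to $z(x^p) = A_0 z(x)$ with $A_0 \in \GL_n(C)$; equivalently, $G_0(x^p) A(x) = A_0 G_0(x)$. Applying the same gauge transformation to the second equation yields $z(x^q) = \hat B(x) z(x)$ with $\hat B(x) = G_0(x^q) B(x) G_0(x)^{-1} \in \GL_n(\hat K)$. Since equivalence preserves consistency, one has $A_0 \hat B(x) = \hat B(x^p) A_0$.

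The next step is to show that $\hat B(x)$ is forced to be constant. Fix $r \in \NX^*$ with $\hat B(x) \in \GL_n(C[[x^{1/r}]][x^{-1/r}])$ and expand $\hat B(x) = \sum_{m \in \frac{1}{r}\ZX} B_m x^m$, a formal Puiseux series with support bounded below. Comparing coefficients in $\hat B(x^p) = A_0 \hat B(x) A_0^{-1}$ yields $B_k = 0$ whenever $k \in \frac{1}{r}\ZX \setminus \frac{p}{r}\ZX$, and $B_{k/p} = A_0 B_k A_0^{-1}$ otherwise. If $B_k \neq 0$ for some $k = \ell/r$, iterating the latter relation forces $p^j \mid \ell$ for every $j \geq 0$, which is impossible unless $\ell = 0$ since $p \geq 2$. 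Thus $\hat B(x) = B_0$ is constant, and the identity $B_0 = A_0 B_0 A_0^{-1}$ gives $A_0 B_0 = B_0 A_0$.

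At this point we have the two formal matrix identities $G_0(x^p) A(x) = A_0 G_0(x)$ and $G_0(x^q) B(x) = B_0 G_0(x)$ with $G_0 \in \GL_n(\hat K)$; the remaining task is to show $G_0 \in \GL_n(K)$. I would substitute $x = t^r$ with $r$ chosen so that $\tilde G_0(t) := G_0(t^r) \in \GL_n(C[[t]][t^{-1}])$, and set $\tilde A(t) := A(t^r)$, $\tilde B(t) := B(t^r) \in \GL_n(C(t))$; the system in $t$ is again a consistent Mahler pair with the same multipliers $p,q$ and rational coefficients. Vectorizing the two matrix identities using $\mathrm{vec}(XYZ) = (Z^T \otimes X)\,\mathrm{vec}(Y)$ produces a linear Mahler system over $C(t)$ for $g(t) := \mathrm{vec}(\tilde G_0(t)) \in (C[[t]][t^{-1}])^{n^2}$, with coefficient matrices $(\tilde A(t)^T \otimes I)^{-1}(I \otimes A_0)$ and $(\tilde B(t)^T \otimes I)^{-1}(I \otimes B_0)$. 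A short calculation using the consistency $\tilde A(t^q)\tilde B(t) = \tilde B(t^p)\tilde A(t)$ together with $A_0 B_0 = B_0 A_0$ verifies that this vectorized system is itself consistent. Proposition~\ref{2mprop} then applies to $g(t)$ and yields $g(t) \in C(t)^{n^2}$, so $\tilde G_0(t) \in \GL_n(C(t))$ and hence $G_0(x) \in \GL_n(C(x^{1/r})) \subset \GL_n(K)$. The main technical points are the Puiseux-support bookkeeping in the second step and the verification that the vectorized system inherits consistency; both are routine once the notational setup is arranged cleanly.
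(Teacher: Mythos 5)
Your proposal is correct and follows essentially the same route as the paper: reduce the first equation to constant form via the regular-singular hypothesis, use consistency and a series expansion to force the transformed second coefficient matrix to be a constant $B_0$ commuting with $A_0$, then view the two matrix identities for $G_0$ as a consistent $n^2$-dimensional Mahler system (the paper phrases the vectorization as the operators $Z\mapsto A(x)ZA_0^{-1}$, $Z\mapsto B(x)ZB_0^{-1}$ rather than via Kronecker products, and uses the opposite gauge convention) and invoke Proposition~\ref{2mprop}. The only differences are notational; you simply spell out the steps the paper leaves as "readily shown."
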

\noindent {\bf Remark: }The Proposition could be deduced from the last part of Corollary \ref{cor2m}, \ie\ from Theorem 1.1 
of \cite{AuB16}. 
Indeed, consider the $C(x)$-subspace space of $C[[x]][x^{-1}]$ generated
by the components of $g(x)$. By (\ref{2msys}), it is invariant under $\sigma_1,\sigma_2$ and
it follows as usual that each  component of $g(x)$
satisfies a system of two scalar linear  $p$- and $q$-Mahler equations and hence is rational
by the Theorem 1.1 of \cite{AuB16}.

Conversely, Theorem 1.1 of \cite{AuB16} can be deduced from Proposition \ref{2mprop}:
Given a formal solution $f(x)$ of a system (\ref{sys2s}), the first part of the proof of Corollary \ref{cor2m}
constructs a system (\ref{2msys}) satisfying the consistency condition (\ref{2mcons}) having a solution vector
in $(\CX[[x]][x^{-1}])^n$. As this solution vector is actually a basis of some vector space containing
$f(x)$, we can assume that one of its components is $f(x)$. Proposition \ref{2mprop} then yields that
$f(x)$ is rational. 

Observe that
Theorem \ref{thm2-1} in case 2M (and hence also the first part of Corollary \ref{cor2m})
is not an immediate consequence of the result of \cite{AuB16} because 
it is not clear a priori that the point 0 is regular singular under the hypotheses of the theorem.
This statement is the contents of Proposition \ref{2mregsing}.\\[0.1in]
\noindent{\bf Proof of Corollary \ref{2mcor}.}
We consider a formal series $G_0(x)\in \GL_n(\hat K)$ reducing  $y(x^p)=A(x)y(x)$ to
$z(x^p)=A_0z(x)$ with a constant invertible matrix $A_0$. This means
\begin{equation}\label{2mg0}G_0(x^p)=A(x)G_0(x)A_0^{-1}.\end{equation}
By a change of variables $x=t^r$, if necessary, we can assume that 
$G_0(x)\in \GL_n(\hat k)$, $\hat k=C[[x]][x^{-1}]$ (we do not use that $A(x), B(x)$ are in 
$C(x^r)$ then).

Applying the gauge transformation $y=G_0(x)z$ to the second equation $y(x^q)=B(x)y(x)$,
we obtain $z(x^q)=\tilde B(x)z(x)$ with some $\tilde B(x)\in\GL_n(\hat K)$ satisfying the consistency
condition $A_0\,\tilde B(x)=\tilde B(x^p)A_0$. Using the series expansion of $\tilde B(x)$, it is readily shown
that $\tilde B(x)$ must be constant and commutes with $A_0$. 
We write $\tilde B(x)=:B_0$. Thus $G_0(x)$ also satisfies
\begin{equation}\label{2mg0b}G_0(x^q)=B(x)G_0(x)B_0^{-1}.\end{equation}

The system (\ref{2mg0}), (\ref{2mg0b}) can be considered as a vectorial system 
$$Y(x^p)=\bar A(x)Y(x),\ \ Y(x^q)=\bar B(x)Y(x),$$ 
where $Y(x)$ has coefficients in $gl_n(\hat k)\sim \hat k^{n^2}$ and $\bar A(x)$, $\bar B(x)$ are the matrices
of the linear operators mapping $Z$ to $A(x)ZA_0^{-1}$ {and  $B(x)ZB_0^{-1}$, respectively}.
This system satisfies the consistency condition, because $A(x)$ and $B(x)$ do and $A_0,B_0$ commute.
Now Proposition \ref{2mprop} can be applied to its formal solution $G_0(x)$ and  we obtain that
$G_0(x)$ has rational entries.  \hfill $\square$\\[0.1in]
{\bf Proof of Proposition \ref{2mprop}.} We first show that $g(x)$ is actually convergent.
This could be deduced from \cite{Ran92}, section 4.
For the convenience of the reader, we provide a short proof.

To do that, we truncate $g(x)$ at a sufficiently high power of $x$ to obtain
$h(x)\in \gl_n(C[x][x^{-1}])$ and introduce $r(x)=h(x)-A(x)^{-1}h(x^p)$
and $\tilde g(x)=g(x)-h(x)$. Then we have
\begin{equation}\label{2m-gr}
\tilde g(x)=A(x)^{-1}\tilde g(x^p) - r(x).\end{equation}
We denote the valuation of $A(x)^{-1}$ at the origin by $s\in\ZX$
and introduce $\tilde A(x)=x^{-s}A(x)^{-1}$ which is holomorphic at the origin.

First choose $M\in\NX$ such that $pM+s>M$ and $h(x)$ such that $g(x)-h(x)$ has at least
valuation $M$. Then by (\ref{2m-gr}), $r(x)$ also has at least valuation $M$.
Now consider $R>0$ such that $\tilde A(x)$ is holomorphic and bounded on $D(0,R)$. 
Then consider for positive $\rho<\min(R,1)$ the vector space $E_\rho$ of all series  
$F(x)=\sum_{m=M}^\infty F_mx^m$ such that $\sum_{m=M}^\infty|F_m|{\rho}^m$ 
converges and define the norm
$|F(x)|_\rho$ as this sum. Then $E_\rho$ equipped with $|\ |_\rho$ is a Banach space
and the existence of a unique solution of (\ref{2m-gr}) in $E_\rho$
for sufficiently small $\rho>0$ follows from the Banach fixed-point theorem using
that $|x^s F(x^p)|_\rho\leq \rho^{Mp+s-M}|F(x)|_\rho$ for $F(x)\in E_\rho$.
This proves the convergence of $\tilde g(x)$ and hence of $g(x)$.

By {(\ref{2msys}), rewritten $g(x)=A(x)^{-1}g(x^p)$}, the function $g$ can only be extended
analytically to a meromorphic function on the unit disk. 
{ According to Theorem 4.2 of \cite{Ran92} (see also \cite{BCR13}), it is sufficient to show that $g(x)$ {does not have} the
unit circle as a natural boundary and the rationality of $g(x)$ follows. {We show how it follows naturally, in our context,} that $g(x)$ can be continued 
analytically as a meromorphic function to all of $\CX$ and, as well, that it has only finitely many poles.
The rationality of $g(x)$ then follows as in  \cite{Ran92} and \cite{BCR13} from a growth estimate.}

As we want to extend $g(x)$ {beyond}
the unit disk, we use the change of variables $x=e^t$, $u(t)=y(e^t)$ and obtain a system of
$q$-difference equations
\begin{equation}\label{2msys2q}u(pt)=\bar A(t)u(t),\ \ \ u(qt)=\bar B(t)u(t)\end{equation}
with $\bar A(t)=A(e^t)$, $\bar B(t)=B(e^t)$. It satisfies the consistency condition
\begin{equation}\label{2mconsi2q}\bar A(qt)\bar B(t)=\bar B(pt)\bar A(t).\end{equation}
We are not in  case 2Q, however, because $\bar A(t),\bar B(t)$ are not rational in $t$, but rational 
in $e^t$.

Nevertheless, the local theory at the origin {used in the proof of Lemma~\ref{2q-app}} applies to the present
consistent system. {In particular, we know that the origin is a regular singular point in case 2Q.} We {therefore} obtain a matrix $A_1$ with eigenvalues $\lambda$ in the annulus $1\leq|\lambda|<p$
and $G_1(t)\in\GL_n(C\{t\}[t^{-1}])$ such that $u=G_1(t)v$ reduces the first equation of (\ref{2msys2q})
to $v(pt)=A_1v(t)$. This means 
\begin{equation}\label{2mg1}G_1(pt)=\bar A(t)G_1(t)A_1^{-1}\mbox{ for small }t.\end{equation}
Applying the same gauge transformation to the second equation of (\ref{2msys2q})
yields an equation $v(qt)=\tilde{\bar B}(t)v(t)$ with some $\tilde{\bar B}(t)\in\GL_n(C\{t\}[t^{-1}])$.
It satisfies the consistency condition $A_1\tilde{\bar B}(t)=\tilde{\bar B}(pt)A_1$.
As at the end of the proof in case 2Q, we expand $\tilde{\bar B}(t)=\sum_{m=m_0}^\infty C_m t^m$.
The coefficients satisfy $A_1C_m=C_m (p^mA_1)$, $m\geq m_0$. As $A_1$ and $p^mA_1$ have no common eigenvalue
unless $m=0$, we obtain that $\tilde{\bar B}(t)=:B_1$ is constant and commutes with $A_1$.
We note the second equation satisfied by $G_1$  
\begin{equation}\label{2mg1b}G_1(qt)=\bar B(t)G_1(t)B_1^{-1}\mbox{ for small }t.\end{equation}

\begin{lemma}\label{2mcont}
The functions $G_1(t)^{\pm1}$ 
can be continued analytically to meromorphic functions on $\CX$
and there exists $\delta>0$ such that both  can be continued analytically
to the sectors $\{t\in\CX^*\mid \delta<\arg(\pm t)<2\delta\}$.
\end{lemma}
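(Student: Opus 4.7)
The plan is to use the functional equation (\ref{2mg1}), rewritten as $G_1(t)=\bar A(t/p)\,G_1(t/p)\,A_1^{-1}$, to extend $G_1$ iteratively from the small punctured disk $\{0<|t|<r_0\}$ on which $G_1\in\GL_n(C\{t\}[t^{-1}])$ is a priori defined. Supposing by induction that $G_1$ has been constructed as a meromorphic function on $\{|t|<p^{k-1}r_0\}$, the right-hand side is meromorphic on $\{|t|<p^k r_0\}$ and agrees with the previous definition on the overlap thanks to (\ref{2mg1}). Letting $k\to\infty$ yields a meromorphic extension of $G_1$ to all of $\CX$. Taking inverses in (\ref{2mg1}) gives $G_1(pt)^{-1}=A_1\,G_1(t)^{-1}\,\bar A(t)^{-1}$, and the analogous iteration meromorphically extends $G_1^{-1}$ to $\CX$.

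Next I would control where poles can land. Let $\calP\subset\CX$ be the set of singularities of $\bar A$ and $\bar A^{-1}$; since $\bar A(t)=A(e^t)$ with $A\in\GL_n(C(x))$, we have
$$\calP=\{c_\ell+2\pi i\,m\mid \ell=1,\ldots,N,\; m\in\ZX\},$$
where the $c_\ell$ are fixed determinations of $\log x_\ell$ for the finitely many nonzero $x_\ell\in\CX$ that are poles of $A$ or of $A^{-1}$. A straightforward induction on the extension procedure shows that the poles of $G_1^{\pm 1}$ are contained in $\{0\}\cup\bigcup_{j\ge 1}p^j\calP$. Because multiplication by $p^j>0$ preserves arguments, every potential pole has argument in the countable set
$$\Theta=\{\arg(c_\ell+2\pi i\,m)\mid \ell=1,\ldots,N,\; m\in\ZX\}\subset\RX/2\pi\ZX.$$
The only accumulation points of $\Theta$ are $\pm\pi/2$, so $\Theta\cap[\epsilon,\pi/2-\epsilon]$ and $\Theta\cap[\pi+\epsilon,3\pi/2-\epsilon]$ are finite for every $\epsilon>0$. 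Choosing $\delta>0$ small enough that both $(\delta,2\delta)$ and $(\pi+\delta,\pi+2\delta)$ are disjoint from $\Theta$, the two sectors $\{\delta<\arg(\pm t)<2\delta\}$ contain no possible poles of either $G_1$ or $G_1^{-1}$, so $G_1$ is holomorphic and invertible there.

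The main obstacle is the risk that iterating the functional equation accumulates new poles in uncontrolled places; what rescues the argument is the angular homogeneity of the construction. Scaling by $p^j$ changes moduli but not arguments, so the angular structure of the pole set is determined entirely by $\calP$, which in turn inherits only the two angular accumulation directions $\pm\pi/2$ from the $2\pi i\ZX$-periodicity built into $t\mapsto e^t$. This leaves ample room to select a pole-free sector on either side of the real axis.
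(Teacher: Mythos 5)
Your proof is correct and follows essentially the same route as the paper: continue $G_1^{\pm1}$ by iterating the functional equation (\ref{2mg1}), observe that the poles are confined to $p^{\NX}$ times the $2\pi i$-periodic singular set of $\bar A^{\pm1}$, and use scale-invariance of arguments to find pole-free sectors. The only cosmetic difference is that you select the sectors by noting that the arguments of this singular set accumulate only at $\pm\pi/2$, whereas the paper argues via its confinement to a vertical strip with real parts bounded away from zero; the two observations are equivalent.
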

{\bf Proof of the lemma.} 
Let $\calM$ be the set of poles of $\bar A(t)^{\pm1}$, \ie\ the set of $t$
such that $e^t$ is a pole of $A(x)$ or $A(x)^{-1}$. Note that $\calM$ is $2 \pi i$-periodic,
has no finite accumulation point and is contained in some vertical strip
$\{t\in\CX\mid -D<\Re t<D\}$.
By (\ref{2mg1}), $G_1(t)^{\pm1}$ can be continued analytically to $\CX^*\setminus(\calM\cdot p^\NX)$
and thus to meromorphic functions on $\CX$ which we denote by the same name. By construction,
$G_1(t)^{\pm1}$ are also analytic in some punctured neighborhood of the origin.
By the properties of $\calM$, the infimum of the $|\Re t_1| $ on the set of all $t_1\in\calM$
having nonzero real part is a positive number. As $\calM$ is contained in some vertical strip
there exist sectors $\{t\in\CX^*\mid \delta<\arg(\pm t)<2\delta\}$ disjoint to $\calM$
and hence to $\calM\cdot p^\NX$. Therefore $G_1(t)^{\pm1}$ can be analytically continued to these sectors
and the lemma is proved.~\hfill$\square$\vspace{2mm}

Consider now the function $d(t)=G_1(t)^{-1}g(e^t)$. By Lemma \ref{2mcont} and because $g(x)$
is holomorphic in some punctured neighborhood of $x=0$, $d(t)$ is defined
and holomorphic for some sector
$S=\{t\in\CX\mid |t|>K,\ \pi+\delta<\arg t<\pi+2\delta\}$. By (\ref{2msys}), (\ref{2mg1}),
and (\ref{2mg1b}) it satisfies
\begin{equation}\label{2md}d(pt)=A_1d(t),\ \ d(qt)=B_1d(t)\mbox{ for }t\in S.\end{equation}
To solve (\ref{2md}), consider a matrix 
$L_1$ commuting with $B_1$ such that $p^{L_1}=A_1$. Put $F(t)=t^{-L_1}d(t)$.
Then
\begin{equation}\label{2mf}F(pt)=F(t),\ \ F(qt)=\tilde B_1F(t)\mbox{ for }t\in S\end{equation}
where {$\tilde B_1=B_1q^{-L_1}$}. Thus $H(s)=F(e^s)$ is $\log(p)$-periodic on the half-strip
$B=\{s\in\CX\mid \Re s>\log(K),\ \pi+\delta<\Im s<\pi+2\delta\}$ and can be expanded in a Fourier series.
This implies that
\begin{equation}\label{2mfourier}F(t)=\sum_{\ell=-\infty}^\infty F_\ell\,t^{\frac{2 \pi i}{\log(p)}\ell}\mbox{ for }t\in S.
\end{equation}
The second equation of (\ref{2mf}) yields conditions on the Fourier coefficients
$$F_\ell\exp\left(2\pi i\tfrac{\log(q)}{\log(p)}\ell\right)=\tilde B_1F_\ell\mbox{ for }\ell\in\ZX.$$
Therefore $F_\ell=0$ unless $\exp\left(2\pi i\frac{\log(q)}{\log(p)}\ell\right)$ is an eigenvalue of 
$\tilde B_1$.
Since $p$ and $q$ are multiplicatively independent, the quotient $\frac{\log(q)}{\log(p)}$ is irrational and hence
$\exp\left(2\pi i\frac{\log(q)}{\log(p)}\right)$ is not a root of unity. Therefore all the numbers
$\exp\left(2\pi i\frac{\log(q)}{\log(p)}\ell\right)$, $\ell\in\ZX$ are different and only finitely many of them
can be eigenvalues of $\tilde B_1$. This shows that the Fourier series  (\ref{2mfourier}) has finitely many terms
and thus $F(t)$ can be analytically continued to the whole Riemann surface $\hat \CX$ of $\log(t)$. The same holds
for $d(t)=t^{L_1}F(t)$. 

{ Since $g(x)$ is a convergent Laurent series in $C[[x]][x^{{-1}}]$, the function 
$h(t)=g(e^t)$ is holomorphic for $t$ with large negative real
part and $2\pi i$-periodic.
We conclude using Lemma \ref{2mcont} that $h(t)=G_1(t)d(t)$ can be
analytically continued to a meromorphic function on $\hat \CX$, in particular the point 
$t=2\pi i$ is at most a pole of $h$. By its periodicity,
this implies that $t=0$ also is at most a pole of $h$ and that it can be continued analytically 
to a $2\pi i$-periodic meromorphic function on $\CX$ which we denote by the same name.

This periodicity allows one to define a meromorphic function $\tilde g(x)$ on $\CX\setminus\{0\}$
by $\tilde g(e^t)=h(t)$. As $\tilde g(x)=g(x)$ for small $|x|\neq0$ by the construction of $h$,
we have shown that $g(x)$ can be continued analytically to a meromorphic function on $\CX$
which we again will denote by the same name.

The formula {$h(t)=G_1(t)d(t)$} and Lemma \ref{2mcont} also imply that $h(t)$ is analytic in some 
sector
$\tilde S=\{t\in\CX^*\mid \delta<\arg t<2\delta\}$ with small positive $\delta$. 
As this sector contains some half strip
$\{t\in\CX\mid \Re t>L,\mu\Re t<\Im t<\mu\Re t+3\pi\}$ 
for some positive $L,\mu$ which has vertical width larger than $2\pi$ and $h$ is $2\pi i$-periodic,
its poles are contained in some vertical strip $\{t\in\CX\mid -L<\Re t<L\}$.
For the function $g(x)$ this means that it can be continued analytically to a meromorphic function on $\CX$
with finitely many poles.
}

The proposition is proved once we have shown that $g(x)$ has polynomial growth as $|x|\to\infty$. This is done as in
the proof of Theorem 4.2 in \cite{Ran92} { (see also \cite{BCR13})}. Consider
$r_0{>1}$ such that $g(x)$ and $A(x)$ are holomorphic on the annulus $|x|>r_0/2$. There are
positive numbers $K,M$ such that $|A(x)|\leq K|x|^M$ for $|x|\geq r_0$.
Consider now the annuli
$$\calA_j=\{x\in\CX\mid r_0^{p^j}\leq|x|<r_0^{p^{j+1}}\},\ j=0,1,...$$
covering the annulus $|x|\geq r_0$. Any $x\in\calA_j$ can be written $x=\xi^{p^j}$ with some $\xi\in\calA_0$.
Then we estimate using (\ref{2msys}) and the inequality for $|A(x)|$
$$|g(x)|=|g(\xi^{p^j})|\leq K ^j \left(|\xi|^{p^{j-1}}\cdots|\xi|^p|\xi|\right)^M \max_{r_0\leq|\xi|\leq r_0^p}|g(\xi)|.$$
Hence there is a positive constant $L$ such that
$|g(x)|\leq L\, K^j \,|x|^{\frac M{p-1}}\mbox{ for }x\in\calA_j.$
Assuming $\log(r_0)\geq1$ without loss in generality, we find that $j\leq \log(\log(|x|))/\log(p)$ for $x\in\calA_j$.
Hence there exists $d>0$ such that
$$|g(x)|\leq L\, (\log(|x|))^d\,|x|^{\frac M{p-1}}\mbox{ for }|x|>r_0$$
and the proof of Proposition \ref{2mprop} is complete.   \hfill $\square$

{
For later use, we note another corollary of Proposition \ref{2mprop}.
\begin{cor}\label{2mpoly}Consider multiplicatively independent positive integers $p,q$. Let $A(x)$ and $B(x)$ 
be two matrices with polynomial entries such that the consistency condition (\ref{2mcons})
holds and $A(0)$ and $B(0)$ are invertible. 
Then there exists a matrix $G(x)$ with polynomial entries, $G(0)=I$ and
$$A(x)=G(x^p)\,A(0)\,G(x)^{-1},\ \ B(x)=G(x^q)\,B(0)\,G(x)^{-1}.$$\end{cor}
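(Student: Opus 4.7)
\begin{prf}[Proposal for Corollary \ref{2mpoly}] The plan is to construct $G(x)$ first as a formal power series, then use Proposition~\ref{2mprop} to upgrade it to a rational function, and finally use the Mahler functional equations to exclude any finite poles.

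\emph{Step 1: Formal construction.} Because $A(0)$ is invertible, I would try to solve
$$G(x^p) = A(x)\,G(x)\,A(0)^{-1}$$
in $\gl_n(C[[x]])$ with $G(0)=I$. Writing $A(x)=\sum_j A_j x^j$ and $G(x)=\sum_k G_k x^k$ and comparing coefficients of $x^m$, one sees that $G_m$ is uniquely determined by the $G_k$ with $k<m$: when $p\nmid m$, from $A(0)G_m A(0)^{-1}=-\sum_{j\geq1}A_j G_{m-j}A(0)^{-1}$; when $m=p\ell$, the analogous relation contains the additional term $G_\ell$ on the right. Thus the formal $G(x)$ with $G(0)=I$ exists and is unique.

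\emph{Step 2: The second equation comes for free.} Set $\tilde B(x)=G(x^q)^{-1}B(x)G(x)$, a formal power series with $\tilde B(0)=B(0)$ (the consistency condition at $x=0$ reads $A(0)B(0)=B(0)A(0)$, so this makes sense). A direct calculation using the first equation for $G$ and the consistency (\ref{2mcons}) for $(A,B)$ yields
$$A(0)\,\tilde B(x)\,A(0)^{-1}=\tilde B(x^p).$$
Comparing coefficients degree by degree, $\tilde B_k=0$ whenever $p\nmid k$, and the remaining relation $\tilde B_m=A(0)\tilde B_{pm}A(0)^{-1}$ together with this forces $\tilde B_k=0$ for all $k\geq1$ (write $k=p^j m$ with $p\nmid m$). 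Hence $\tilde B(x)=B(0)$ identically, i.e.\ $G(x^q)=B(x)\,G(x)\,B(0)^{-1}$.

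\emph{Step 3: Rationality via Proposition~\ref{2mprop}.} View $G(x)$ as an $n^2$-vector. The operators $Z\mapsto A(x)ZA(0)^{-1}$ and $Z\mapsto B(x)ZB(0)^{-1}$ give polynomial matrices $\bar A(x),\bar B(x)\in\GL_{n^2}(C(x))$, and the consistency of $(A,B)$ together with $A(0)B(0)=B(0)A(0)$ yields $\bar A(x^q)\bar B(x)=\bar B(x^p)\bar A(x)$. Thus $G(x)\in\gl_n(C[[x]])\subset(C[[x]][x^{-1}])^{n^2}$ is a formal solution of a consistent $n^2$-dimensional Mahler system, and Proposition~\ref{2mprop} gives $G(x)\in\gl_n(C(x))$.

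\emph{Step 4: No finite poles.} Let $\calP\subset\CX^*$ be the set of poles of the rational matrix $G(x)$. From $G(x^p)=A(x)G(x)A(0)^{-1}$ and the fact that $A(x)$ is polynomial, the poles of $G(x^p)$ in $\CX^*$ are contained in $\calP$; on the other hand they consist of the $p$ distinct $p$-th roots of each element of $\calP$, a set of size $p|\calP|$. Hence $p|\calP|\leq|\calP|$, forcing $\calP=\emptyset$. Since $G(0)=I$ is invertible there is also no pole at $0$, so $G(x)$ is a polynomial matrix, and the corollary follows. The main obstacle in this plan is Step 2, where one must be careful to exploit the consistency condition in exactly the right form so that the second equation for $G$ drops out automatically; the remaining steps are routine applications of previously established results.
\end{prf}
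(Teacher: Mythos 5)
Your proposal is correct and follows essentially the same route as the paper: construct the unique formal $G(x)$ with $G(0)=I$ (the paper does this via an $x$-adic fixed-point argument rather than coefficient comparison, which is equivalent), show $\tilde B(x)=G(x^q)^{-1}B(x)G(x)$ is constant by consistency and series expansion, apply Proposition~\ref{2mprop} to the induced consistent $n^2$-dimensional system to get rationality, and rule out finite poles using stability of the pole set under taking $p$-th roots. Your counting formulation $p|\calP|\le|\calP|$ in Step 4 is just a compact version of the paper's observation that no nonempty finite subset of $\CX\setminus\{0\}$ is stable under taking all $p$-th roots.
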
}
\noindent As an application, consider two polynomials $a(x),b(x)$ without constant term satisfying
$$a(x^q)-a(x)=b(x^p)-b(x).$$
This is the consistency condition for the matrices 
$A(x)=\left(\begin{matrix}1&0\\a(x)&1\end{matrix}\right)$ and $B(x)=\left(\begin{matrix}1&0\\b(x)&1\end{matrix}\right)$.
The above Corollary yields the existence of a polynomial matrix $G(x)$ with the stated properties.
The conditions encoded in  $A(x)G(x)=G(x^p)$ and $G(0)=I$ imply that $G(x)=\left(\begin{matrix}1&0\\g(x)&1\end{matrix}\right)$
with some polynomial $g(x)$ without constant term. We obtain 
$$a(x)=g(x^p)-g(x),\ \ b(x)=g(x^q)-g(x).$$ 
 In the case of $p,q$ without common divisor, the existence of such a polynomial $g(x)$ can be proved
directly. Under the present hypothesis that $p$ and $q$ are multiplicatively independent, {this is possible but less
elementary.} A similar reasoning will appear in the proof of Corollary \ref{2mregsing}.\\[0.1in]
{\bf Proof of Corollary \ref{2mpoly}.} 
Putting $G(x)=I+H(x)$, the first equation of the statement is equivalent to
$$H(x)=A(x)^{-1}A(0)-I+A(x)^{-1}H(x^p)A(0).$$
Using the fixed point principle in $\calM=\gl_n(x\,C[[x]])$ equipped with the {$x$-adic norm}, it follows that
the latter equation has a unique solution in $\calM$. 
This also follows from Proposition 34 of \cite{Ro15}.
Hence there is a unique formal series $G(x)=I+...$ such that $A(x)=G(x^p)\,A(0)\,G(x)^{-1}$.
Now put $\tilde B(x)=G(x^q)^{-1}B(x)G(x)$. As $A(x), B(x)$ satisfy the consistency condition,
so do $A_0{:=A(0)}$ and $\tilde B(x)$. Again a series expansion yields that $\tilde B(x)=:B_0$ is
constant.

As in the proof of Corollary \ref{2mcor}, the formal solution $G(x)\in I+\calM$ of 
the system  
\begin{equation}\label{sysG}{G(x^p)=A(x)G(x)A_0^{-1},\ \ G(x^q)=B(x)G(x)B_0^{-1}}\end{equation}
can be considered as a formal (matricial) solution of a consistent system (\ref{2msys}) and
thus must have rational entries by Proposition \ref{2mprop}. 

{It remains to prove that $G(x)$ actually has polynomial entries. As $A(x)$ has polynomial
entries, the first equation of (\ref{sysG}) implies the following statement: If $x=x_0$
is a finite regular point of all entries of $G(x)$ then so is $x=x_0^p$. The contrapositive -- which is also true -- says: 
If $x=x_0$ is a finite pole of some entry of $G(x)$ then so is any $p$-th root of $x_0$. Now there is no finite subset of $\CX$
that is stable with respect to taking any $p$-th root of any element besides the empty set and $\{0\}$. 
As the entries of $G(x)$ have no poles at $x=0$, the set of their finite poles must be empty. Hence $G(x)$
has polynomial entries.\hfill $\square$\vspace{2mm}
}

Unfortunately not every Mahler system is regular singular.\\
{\bf Example.} The system $y(x^p)=A(x)y(x),\ 
A(x)=\left(\begin{matrix}1&0\\x^{-1}&2\end{matrix}\right)$
is not regular singular at the origin. Otherwise, there exist an invertible matrix
$T(x)\in\GL_2(E), E=\CX[[x^{1/r}]][x^{-1/r}]$ for some positive integer $r$ and some constant
triangular matrix $C=\left(\begin{matrix}a&0\\c&b\end{matrix}\right)$
with nonzero $a,b$ such that $A(x)T(x)=T(x^p)C$. 
For the determinants this means that $2\det(T(x))=ab\det(T(x^p))$; hence $\det(T(x))$ is a constant
and $ab=2$.
Considering the upper right elements of both sides implies $T_{12}(x)=T_{12}(x^p)b$. 

If $T_{12}(x)\neq 0$, we obtain $b=1$ and thus
$a=2$. This means that $C$ has the distinct eigenvalues $1,2$ and hence
with an additional transformation, we can replace it by $\diag(1,2)$ and
as a consequence, $T(x)$ is replaced by some lower triangular matrix.
If $T_{12}(x)=0$, we immediately obtain that $a=1,\,b=2$ and hence we can replace
$C$ by $\diag(1,2)$ also in this case.

Thus we can assume in both cases that $C=\diag(1,2)$ and that $T(x)$ is lower triangular and hence its diagonal elements
are constants. 

Now put $T(x)=\left(\begin{matrix}c&0\\u(x)&d\end{matrix}\right)$. Then $u(x)\in E$ satisfies
$cx^{-1}+2u(x)=u(x^p)$. We now consider the vector space $F$ of all formal Laurent series
$\sum_{j=-\infty}^{\infty}a_jx^{-j/r}$. Then $u(x)\in F$ and also 
$v(x):=-c\sum_{k=0}^{\infty}2^{-k-1}x^{-p^k}\in F$. As $v(x)$ satisfies $cx^{-1}+2v(x)=v(x^p)$,
the difference $d(x)=u(x)-v(x)\in F$ satisfies $2d(x)=d(x^p)$. In view of the series
expansions, this means that $d(x)=:d$ is a constant. We obtain that $u(x)=v(x)+d$
in contradiction to $u(x)\in E$.\vspace{2mm}

Fortunately, our matrices $A(x)$ are very special.
The proof of Theorem \ref{thm2-1} in  case 2M is complete, once we have shown
\begin{prop}\label{2mregsing}Consider $A(x),B(x)\in\GL_n(\hat K)$, $\hat K=\bigcup_{r\in\NX^*}C[[x^{1/r}]][x^{-1/r}]$ satisfying
the consistency condition (\ref{2mcons}). Then $x=0$ is a regular singular point of $y(x^p)=A(x)y(x)$.
\end{prop}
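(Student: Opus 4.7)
\vspace{2mm}\noindent\textbf{Proof proposal.} The plan is to iterate the consistency condition and apply the formal classification of Mahler systems at $x=0$. Rewriting (\ref{2mcons}) as $A(x^q)=B(x^p)\,A(x)\,B(x)^{-1}$ shows, exactly as in Remark~\ref{obs2sigma}, that the gauge transformation $z=B(x)y$ sends $y(x^p)=A(x)y(x)$ to $z(x^p)=A(x^q)z(x)$. Iterating $N$ times, the Mahler systems $y(x^p)=A(x)y(x)$ and $y(x^p)=A(x^{q^N})y(x)$ are equivalent over $\hat K$ for every positive integer $N$.

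Next I would invoke a formal classification at $x=0$ for Mahler systems $y(x^p)=M(x)y(x)$ with $M\in\GL_n(\hat K)$: to every such system one associates a finite multiset of non-negative rational slopes $\{\lambda_1,\ldots,\lambda_n\}$ (the slopes of its Newton polygon at $0$), enjoying the three properties that (i) the multiset depends only on the equivalence class of the system over $\hat K$, (ii) the system is regular singular at $x=0$ precisely when every $\lambda_i=0$, and (iii) the substitution $x\mapsto x^q$ multiplies every slope by $q$, because entry-wise $x$-adic valuations of $M$ are all scaled by $q$ under $x\mapsto x^q$, so $y(x^p)=M(x^q)y(x)$ has slopes $q\lambda_1,\ldots,q\lambda_n$.

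Combining the equivalences above with (i) and (iii), the slope multiset $\{\lambda_i\}$ of $A$ must coincide, as a multiset, with $\{q^N\lambda_i\}$ for every $N\in\NX^*$. Since $q\geq 2$ and the $\lambda_i$ are non-negative rationals in a finite multiset, this is possible only if every $\lambda_i=0$. By (ii), $x=0$ is a regular singular point of $y(x^p)=A(x)y(x)$, as claimed.

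The main obstacle is establishing the formal classification, in particular the gauge invariance (i) of the Newton polygon under equivalence by elements of $\GL_n(\hat K)$. A natural route is to pass to a cyclic vector, reducing the problem to a scalar $p$-Mahler equation of order $n$ whose Newton polygon at $0$ is a classical gauge invariant; alternatively, one can carry out a direct block-triangular reduction that successively isolates the pure-isoclinic pieces, mirroring the Turrittin-Levelt theory adapted to the Mahler operator (analogous to what is used in Section~\ref{thm2Q} for $q$-difference systems). Once the invariant is in hand, property (iii) is immediate from the definition of the Newton polygon and the argument above finishes the proof.
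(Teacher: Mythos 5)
There is a genuine gap, and it sits exactly at the crux of the proposition. Your first step is fine: by (\ref{2mcons}) the gauge transformation $z=B(x)y$ does carry $y(x^p)=A(x)y(x)$ to $z(x^p)=A(x^q)z(x)$, so the system is equivalent over $\hat K$ to $y(x^p)=A(x^{q^N})y(x)$ for every $N$; this is the analogue of Remark~\ref{obs2sigma}. The problem is the ``formal classification'' you then invoke: properties (i) and (ii) are false for any invariant built, as you propose, from Newton polygons of valuations. The rank-one system $y(x^p)=x\,y(x)$ is regular singular at $0$ --- it becomes $z(x^p)=z(x)$ under the ramified gauge transformation $z=x^{-1/(p-1)}y$ with $x^{-1/(p-1)}\in\hat K$ --- yet its Newton polygon (whether computed from the matrix or from the scalar operator $\sigma-x$) has the nonzero slope $-1/(p-1)$; so the slope multiset is neither an invariant of the $\hat K$-equivalence class nor zero exactly on regular singular systems. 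Conversely, the paper's example $A(x)=\left(\begin{matrix}1&0\\ x^{-1}&2\end{matrix}\right)$ is \emph{not} regular singular, and its irregularity is produced by the interaction of the entry $x^{-1}$ with the distinct diagonal constants $1,2$, not by a slope datum that scales under $x\mapsto x^q$. For the Mahler operator there simply is no off-the-shelf Turrittin--Levelt-type invariant with your three properties (the paper states that ``the behavior of the solutions of Mahler systems near $0$ and $\infty$ is not well understood''), and the step you defer as ``the main obstacle'' is in fact the entire content of the proposition; a cyclic-vector reduction does not help, since the scalar Newton polygon has the same defects.

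For comparison, the paper is forced onto a completely different and much more hands-on route: it first block-triangularizes the pair $(A,B)$ by repeatedly exploiting the consistency condition (Lemma~\ref{2mtriang}), then argues by induction on $n$, reducing the diagonal blocks to constants, and finally removes the polar part of the off-diagonal block by Corollary~\ref{2mpoly}, whose proof rests on the rationality of formal solutions of consistent pairs (Proposition~\ref{2mprop}) and on a fixed-point argument showing that a coefficient matrix whose entries have positive valuation is regular singular. To rescue your scaling idea you would first have to construct a genuinely $\hat K$-gauge-invariant measure of irregularity for Mahler systems and prove that it vanishes exactly for regular singular systems; nothing of the sort is available to be invoked.
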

\begin{prf} We will show later
\begin{lemma}\label{2mtriang}
Under the hypotheses of Proposition \ref{2mregsing}, there exists a gauge transformation $y=H(x)z$, $H(x)\in\GL_n(\hat K)$, that
changes (\ref{2msys}) into 
\begin{equation}\label{2msystilde}z(x^p)=\tilde A(x)z(x),\ \ z(x^q)=\tilde B(x)z(x),\end{equation}
where $\tilde A(x)=d\,I$ with some $d\in C$
or $\tilde A(x),\tilde B(x)$ are both lower block triangular with blocks of the same size,
\ie\ there exist $m\in\{1,...,n-1\}$ and $A_{11}(x),B_{11}(x)\in\GL_{m}(\hat K)$, $A_{22}(x),B_{22}(x)\in\GL_{n-m}(\hat K)$,
$A_{21}(x),B_{21}(x)\in\hat K^{n-m,m}$ such that
\begin{equation}\label{2mblock}
\tilde A(x)=\left(\begin{matrix}A_{11}(x)&0\\A_{21}(x)&A_{22}(x)\end{matrix}\right),\ \ 
\tilde B(x)=\left(\begin{matrix}B_{11}(x)&0\\B_{21}(x)&B_{22}(x)\end{matrix}\right).
\end{equation}
\end{lemma}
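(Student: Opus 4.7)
My strategy is to find a proper common invariant $\hat K$-subspace $W \subset \hat K^n$ for the pair $(A,B)$, in the sense that $A(x)W \subset W^{(p)}$ and $B(x)W \subset W^{(q)}$, where $W^{(s)}$ denotes componentwise substitution $x \mapsto x^s$. Extracting a basis of $W$ and extending it to a basis of $\hat K^n$ yields a gauge transformation $H(x) \in \GL_n(\hat K)$ putting both $A$ and $B$ simultaneously into block lower triangular form with matching block sizes. If no such proper subspace exists, this rigidity should force $\tilde A(x) = dI$.

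Concretely, the semi-linear operators $\phi_p(v) := A(x)^{-1} v(x^p)$ and $\phi_q(v) := B(x)^{-1} v(x^q)$ on $\hat K^n$ commute precisely by virtue of the consistency condition (\ref{2mcons}), so the problem reduces to finding simultaneous invariant subspaces for two commuting semi-linear maps. I would run a leading-term analysis at $x=0$: after a preliminary diagonal gauge transformation with entries fractional powers of $x$ (a Newton-polygon style reduction), bring $A(x)$ to a form with a well-defined leading coefficient $A_0 \in \gl_n(C)$, and similarly $B_0$ for $B(x)$. Matching lowest-order terms in $A(x^q)B(x) = B(x^p)A(x)$ forces an algebraic compatibility---typically plain commutation---between $A_0$ and $B_0$. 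If $A_0$ admits a nontrivial generalized eigenspace decomposition, $B_0$ preserves it, and a Hensel/Banach fixed-point lift (in the spirit of the argument in Corollary \ref{2mpoly}) extends this decomposition of $C^n$ to a decomposition of $\hat K^n$ preserved by both $A(x)$ and $B(x)$. If instead $A_0$ has a single eigenvalue but is a nontrivial Jordan block, the generalized kernel of $A_0 - \lambda I$ supplies a proper invariant subspace, and consistency propagates the invariance to $B$.

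The main obstacle is the residual case $A_0 = \lambda I$: the leading-order analysis is inconclusive, and one must iterate, stripping the scalar part and examining the next nonzero Puiseux coefficient of $A(x) - \lambda I$. The argument must show that this descent either produces, at some stage, a common $(A,B)$-invariant subspace, or else terminates with $A(x) \equiv dI$. The multiplicative independence of $p$ and $q$, which prevents algebraic coincidences between the twisted arguments $x^p$ and $x^q$, should play the decisive role in guaranteeing termination. Handling this cleanly---and verifying at each iteration step that the constructed subspace is genuinely $B$-invariant via consistency---is the core difficulty I anticipate.
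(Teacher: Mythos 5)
Your overall target---producing a common invariant subspace so that both matrices become simultaneously block lower triangular, with $\tilde A = dI$ as the degenerate alternative---is the right one, but the two steps that would carry the argument are not in place. First, the leading-coefficient analysis does not give what you claim. If $A(x)=x^{a}(A_0+\cdots)$ and $B(x)=x^{b}(B_0+\cdots)$, the two sides of $A(x^q)B(x)=B(x^p)A(x)$ have valuations $qa+b$ and $pb+a$, which differ unless $a(q-1)=b(p-1)$; when they differ, matching lowest-order terms forces one of $A_0B_0$, $B_0A_0$ to vanish rather than yielding commutation, and when the leading matrix coefficients are singular (which a diagonal twist cannot always repair---the paper's own example of a non--regular-singular Mahler system shows that normal forms at $0$ are delicate in this setting) the ``leading coefficient'' is not even well defined in the way you need. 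The paper sidesteps all of this by invoking Theorem 24 of \cite{Ro15}, which gauge-transforms $A(x)$ into lower triangular form with \emph{constant} diagonal entries; your proposal has no substitute for this input.

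Second, and more seriously, the case you flag as ``the core difficulty''---$A_0=\lambda I$, where eigenvalue splitting gives nothing---is precisely where the entire content of the lemma lies, and you leave it unresolved. The paper's resolution is not a descent on successive Puiseux coefficients: after triangularizing $A$, it locates the topmost nonzero entry $b_{r\ell}(x)$ of $B$ sitting above the maximal identity block $I_m$ in the lower right corner of $A$, shows from consistency that this entry is a nonzero constant and that $a_{rr}=a_{\ell\ell}=1$, and then uses the $\ell$-th column of $B$ itself to build an explicit gauge transformation $S(x)$ that turns the $r$-th column of $A$ into a unit vector; a permutation then enlarges the identity block from $I_m$ to $I_{m+1}$, and the process terminates because $n$ is finite. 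Without an argument of this kind (or some other termination mechanism), your proposal remains a plan rather than a proof: the statement to be established in the residual case is exactly the dichotomy of the lemma, so nothing has been reduced.
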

{Recall from the beginning of section \ref{Sec2a} that gauge transformations preserve the consistency condition. Hence
it also holds for $\tilde A(x)$ and $\tilde B(x)$.}

We now prove the Proposition by induction. In  case $n=1$, it has been shown in Proposition 23 of
\cite{Ro15} that $y(x^p)=A(x)y(x)$ is always regular singular. Indeed, if $A(x)=a x^s b(x)$, where
$a\in C^*$, $s\in\QX$ and $b(x)\in C[[x]]$ with $b(0)=1,$ then there is a formal series $c(x)\in C[[x]]$ with $c(0)=1$
satisfying $c(x^p)=b(x)c(x)$ as a power series expansion readily shows. Thus $y(x^p)=A(x)y(x)$ is equivalent
to $z(x^p)=a x^s z(x)$ and by $z=x^{\frac s{p-1}}v$, this is equivalent to $v(x^p)=a\, v(x)$.

So suppose that the statement has been proved for all dimensions smaller than $n$. 
Given $A(x),B(x)$ as in the hypothesis, we now invoke Lemma \ref{2mtriang}.
{When $\tilde A(x)=dI$}, there is nothing to do. 
Otherwise, we observe that the couples $A_{11}(x),B_{11}(x)$ {and} $A_{22}(x),B_{22}(x)$
also satisfy the consistency condition and therefore by the induction hypothesis, $x=0$
is a regular singular point of $u(x^p)=A_{jj}(x)u(x)$, $j=1,2$. Thus there exists gauge transformations
$u=F_{jj}(x)v$ {with entries in $\hat K$} that transform the systems to constant ones $v(x^p)=\tilde A_{jj}v(x)$, $j=1,2$.
Performing the same gauge transformations on the systems $u(x^q)=B_{jj}(x)u(x)$, $j=1,2$ yields
systems $v(x^q)=\tilde B_{jj}(x)v(x)$ with $\tilde B_{jj}(x)$ having coefficients in $\hat K$.
As we still have the consistency condition, we have 
$$\tilde A_{jj}\tilde B_{jj}(x)=\tilde B_{jj}(x^p)\tilde A_{jj},\ \ j=1,2.$$
As used before, this implies that {the} $\tilde B_{jj}(x)$ are also constant and commute with $\tilde A_{jj}$.
Hence using the block diagonal matrix $F(x)=\diag(F_{11}(x),F_{22}(x))$, the system (\ref{2msystilde})
is reduced to one, where additionally $A_{jj},\ B_{jj}$ are constant and commute.

It remains to show that for a system (\ref{2msystilde}), (\ref{2mblock}) with constant commuting $A_{jj},\ B_{jj}$
satisfying the consistency condition, the point $x=0$ is a regular singular point of the 
first equation of (\ref{2msystilde}).
Observe that the consistency condition reduces to an equation for the lower left block
\begin{equation}\label{2m21}
A_{21}(x^q)B_{11}+A_{22}B_{21}(x)=B_{21}(x^p)A_{11}+B_{22}A_{21}(x).
\end{equation}
This suggests {splitting} $A_{21}(x),B_{21}(x)$ into their polar parts $A^-_{21}(x),B^-_{21}(x)$
and their regular parts $A^+_{21}(x),B^+_{21}(x)$
containing the terms with {negative or non-negative exponents, respectively}. Then  (\ref{2m21})
splits into two equations
\begin{equation}\label{2m21pm}
A^{\pm}_{21}(x^q)B_{11}+A_{22}B^{\pm}_{21}(x)=B^{\pm}_{21}(x^p)A_{11}+B_{22}A^{\pm}_{21}(x).
\end{equation}
We are interested in the polar parts that have to be removed. 
They are polynomial in $x^{-1/r}$ for some positive integer $r$.
So we replace $x=\xi^{-r}$ and consider the matrices
\begin{equation}\label{2mblock-}
A^-(\xi)=\left(\begin{matrix}A_{11}&0\\A^-_{21}(\xi^{-r})&A_{22}\end{matrix}\right),\ \ 
B^-(\xi)=\left(\begin{matrix}B_{11}&0\\B_{21}(\xi^{-r})&B_{22}\end{matrix}\right).
\end{equation}
These matrices are polynomial in $\xi$ and satisfy the consistency condition because of
(\ref{2m21pm}). Here Corollary \ref{2mpoly}
applies and yields a polynomial matrix $G(\xi)$  with $G(0)=I$ 
such that 
\begin{equation}\label{a-g}A^-(\xi)G(\xi)=G(\xi^p)A^-(0).\end{equation} 
{Writing $G(\xi)=\left(\begin{matrix}G_{11}(\xi)&G_{12}(\xi)\\
G_{21}(\xi)&G_{22}(\xi)\end{matrix}\right)$ with blocks of the same size as those of $A^-$, we find
$A_{11}G_{12}(\xi)=G_{12}(\xi^p)A_{22}$ from the $(1,2)$-blocks in (\ref{a-g}).
Expanding $G_{12}(\xi)$ in powers of $\xi$ and using $G_{12}(0)=0$, we find that $G_{12}(\xi)$ vanishes.
Similar, the diagonal blocks in (\ref{a-g}) now show that $G_{11}(\xi)=I_m$, $G_{22}(\xi)=I_{n-m}$.
Finally, its $(2,1)$-blocks show that
$A^-_{21}(\xi^{-r})+A_{22}G_{21}(\xi)=G_{21}(\xi^p)A_{11}$.}
Then the transformation $z=G(x^{-1/r})w$ changes $z(x^p)=\tilde A(x)z(x)$ into
$w(x^p)=\check A(x)w(x)$, where $G(x^{-p/r})\check A(x)=\tilde A(x)G(x^{-1/r})$. Using the block structure of
$\tilde A(x)$ and $G(\xi)$, this implies that $\check A(x)=\left(\begin{matrix}A_{11}&0\\\check A_{21}(x)&A_{22}\end{matrix}\right) $
where 
$$\check A_{21}(x) = A_{21}(x)+A_{22}G_{21}(x^{-1/r})-G_{21}(x^{-p/r})A_{11}=A_{21}(x)-A^-_{21}(x)=A^+_{21}(x).$$

Thus we have shown that $y(x^p)=A(x)y(x)$ is equivalent over $\hat K$ to $w(x^p)=\check A(x)w(x)$ where all entries of the coefficient
matrix $\check A(x)$ have positive valuation. As shown in the beginning of the proof of
Corollary \ref{2mpoly} and stated in Proposition 34 of \cite{Ro15}, 
this implies that $x=0$ is a regular singular point of the latter system and hence also of
the given system $y(x^p)=A(x)y(x)$. This completes the proof of Proposition \ref{2mregsing}.
\end{prf}

{\bf Proof of Lemma \ref{2mtriang}.} By Theorem 24 of \cite{Ro15}, there exists a gauge transformation $y=T(x)z$, $T(x)\in\GL_n(\hat K)$ that changes
$y(x^p)=A(x)y(x)$ into $z(x^p)=A^{(1)}(x)z(x)$ where 
$A^{(1)}(x)\in\GL_n(\hat K)$ is lower triangular and has constant
diagonal entries. Performing the same gauge transformation on the second
equation $y(x^q)=B(x)y(x)$, we can assume without loss of generality that
the given matrix $A(x)$ additionally has this property.
In the sequel, we denote the entries of $A(x)$ by $a_{jk}(x)$, those of $B(x)$ by $b_{jk}(x)$, $j,k=1,...,n$, and, {without mentioning it again}, 
do the same for other matrices. 
We omit the argument ``$(x)$'' if an entry is constant.

If the diagonal entries $a_{jj},$ $j=1,...,n$, were distinct, it could be shown from the 
consistency condition that $B(x)$ is also lower triangular thus proving the Lemma --
in fact, this would first be done for $b_{1n}(x)$ as the consistency condition implies that 
$a_{11}b_{1n}(x)=b_{1n}(x^p)a_{nn}$ and 
thus $b_{1n}(x)=0$ if $a_{11}\neq a_{nn}$; then the same follows successively for the 
other entries of $B(x)$ above the diagonal in a similar way.
Unfortunately, we have no information about these diagonal entry.

Dividing $A(x)$ by $a_{nn}$, we can assume that $a_{nn}=1$.
Let $m\geq1$ be the maximal length of a block $I_m$ in the lower right corner of $A(x)$, \ie\ we start with
$$A(x)=\left(\begin{matrix}A_{11}(x)&0\\A_{21}(x)&I_{m}\end{matrix}\right),\ A_{11}(x)\mbox{ lower triangular with constant diagonal. }$$
If all entries $b_{jk}(x)$ vanish for $j=1,...n-m,\ k=n-m+1,...,n$, then $B(x)$ is lower block triangular with
blocks of the same size as $A(x)$ and the lemma is proved.
Otherwise there is an entry $b_{r\ell}(x)\neq0$ with $1\leq r\leq n-m$, $n-m<\ell\leq n$. By going over to the
uppermost nonzero entry in the $\ell$-th column of $B(x)$, we can assume that $b_{j\ell}(x)=0$ for $j=1,...,r-1$
if $r>1$.

We express the element $p_{r\ell}(x)$ of $P(x)=A(x^q)B(x)=B(x^p)A(x)$
in two ways and find $p_{r\ell}(x)=a_{rr}b_{r\ell}(x)=b_{r\ell}(x^p)a_{\ell\ell}$.
This yields that $b_{r\ell}(x)$ is constant
and $a_{rr}=a_{\ell\ell}=1$. We denote $d=b_{r\ell}(x)$.

Consider now the following matrix $S(x)$: Its $r$-th column is $\frac1d B_\ell(x^{1/q})$, where $B_\ell(x)$
denotes the $\ell$-th column of $B(x)$; the other columns of $S(x)$ are the unit vectors $e_1,...,e_{r-1}$ {and}
$e_{r+1},...,e_{n}$. Observe that $S(x)$ is lower triangular. We now perform the gauge transformation $y=S(x)z$
on our system and obtain $A^{(2)}(x)=S(x^p)^{-1}A(x)S(x)$ and $B^{(2)}(x)=S(x^q)^{-1}B(x)S(x)$ which still
satisfy the consistency condition. $A^{(2)}(x)$ is still lower triangular and has unchanged diagonal entries and
lower right block.
The right multiplication $B(x)S(x)$ adds multiples of columns $r+1,...,n$ to the $r$-th column of $B(x)$ -- this is not
really interesting. The left multiplication  by $S(x^q)^{-1}$ substracts $\frac{1}db_{j\ell}(x)$ times row
$r$ from row $j$ of the resulting matrix for $j=r+1,...,n$. This means in particular that the $\ell$-th column
of $B^{(2)}(x)$ is a multiple of the $r$-th unit vector.

We consider now the entries $p^{(2)}_{j\ell}(x)$, $j=r+1,...,n$, of the product
$P^{(2)}(x)=$ $A^{(2)}(x^q)B^{(2)}(x)=$ $B^{(2)}(x^p)A^{(2)}(x)$. 
As the $\ell$-th columns of 
$B^{(2)}(x)$ and $A^{(2)}(x)$ are multiples of unit vectors,
we find that
$$p^{(2)}_{j\ell}(x)=a^{(2)}_{jr}(x^q)d=b^{(2)}_{j\ell}(x^p)a_{\ell\ell}=0$$
and hence $a^{(2)}_{jr}(x)=0$ for $j=r+1,...,n$.
Thus the $r$-th column of $A^{(2)}(x)$ equals the $r$-th unit vector.

If $r<n-m$ then we can exchange the $r$-th and $(r+1)$-th rows and columns of $A^{(2)}(x)$ and $B^{(2)}(x)$ and 
obtain $A^{(3)}(x),\,B^{(3)}(x)$ which still satisfy the consistency condition, $A^{(3)}(x)$ is 
lower triangular with constant diagonal and lower right block $I_m$, but now the additional
column that is a unit vector is the $(r+1)$-th column. If $r+1<n-m$ then we repeat the modification.
In this way, we obtain a system $v(x^p)=A^{(4)}(x)v(x)$, $v(x^q)=B^{(4)}(x)v(x)$ equivalent to (\ref{2msys})
over $\hat K$, where $A^{(4)}(x)$ is still lower triangular with constant diagonal but now has a
lower right block $I_{m+1}$, \ie\ its size has increased.
Thus we can start all over and, after a finite number of steps, we either reach a situation 
(\ref{2msystilde}), (\ref{2mblock}) proving the lemma
or we stop with $\tilde A(x)=I_n.$
This proves the Lemma and completes the proof of Theorem \ref{thm2-1} in  case 2M.
\hfill $\square$\\[0.1in]

{\noindent{Acknowledgement.} The authors would like to thank Boris Adamczewski for suggesting an improvement of the 
proof of Proposition \ref{2mprop} and for pointing out the article \cite{BCR13} to us, Nicholas Brisebarre for pointing out \cite{ApLac,Herm,Jolly,LuPe98,Mart}, and the referees for many useful comments and suggestions.  }\\

The work of the second author  was supported by a grant from the Simons Foundation (\#349357, Michael Singer).

\bibliographystyle{plain}
\bibliography{DDrefs}

\end{document}